\newtheorem{theorem}{Theorem}[section]
\newtheorem{corollary}[theorem]{Corollary}
\newtheorem{lemma}[theorem]{Lemma}
\newtheorem{proposition}[theorem]{Proposition}
\theoremstyle{definition}
\theoremstyle{remark}
\newcommand{\eps}{\varepsilon}
\newcommand{\calA}{\mathcal{A}}
\newcommand{\calJ}{\mathcal{J}}
\newcommand{\calL}{\mathcal{L}^1}
\newcommand{\R}{\mathds{R}}
\newcommand{\N}{{\mathds{N}}}
\newcommand{\RR}{\mathrm{I\kern-0.20emR}}
\newcommand{\D}{\mathrm{d}\kern0.2pt}
\DeclareMathOperator{\dist}{dist}
\title[$q$-harmonic functions of fractional Schr{\"o}dinger operator]{Gradient estimates of $q$-harmonic functions of fractional Schr{\"o}dinger operator}
\author[T. Kulczycki]{Tadeusz Kulczycki}
\thanks{The research was supported in part by NCN grant no. 2011/03/B/ST1/00423.}
\address{Tadeusz Kulczycki, Institute of Mathematics and Computer Science, Wroc{\l}aw University of Technology, Wyb. Wyspia{\'n}skiego 27, 50-370 Wroc{\l}aw, Poland.}
\email{Tadeusz.Kulczycki@pwr.wroc.pl}
\begin{document}
%\sloppy
%\footnotetext{}
\begin{abstract}
We study gradient estimates of $q$-harmonic functions $u$ of the fractional Schr{\"o}dinger operator $\Delta^{\alpha/2} + q$, $\alpha \in (0,1]$ in bounded domains $D \subset \R^d$. For nonnegative $u$ we show that if $q$ is H{\"o}lder continuous of order $\eta > 1 - \alpha$ then $\nabla u(x)$ exists for any $x \in D$ and  $|\nabla u(x)| \le c u(x)/ (\dist(x,\partial D) \wedge 1)$. The exponent $1 - \alpha$ is critical i.e. when $q$ is only $1 - \alpha$ H{\"o}lder continuous $\nabla u(x)$ may not exist. The above gradient estimates are well known for $\alpha \in (1,2]$ under the assumption that $q$ belongs to the Kato class $\calJ^{\alpha - 1}$. The case $\alpha \in (0,1]$ is different. To obtain results for $\alpha \in (0,1]$ we use probabilistic methods. As a corollary, we obtain for $\alpha \in (0,1)$ that a weak solution of $\Delta^{\alpha/2}u + q u = 0$ is in fact a strong solution.
\end{abstract}

\maketitle

\section{Introduction}

Let $\alpha \in (0,2)$, $d \in \N$ and $q$ belong to the Kato class $\calJ^{\alpha}$. We say that a Borel function $u$ on $\R^d$ is {\it{$q$-harmonic}} in an open set $D \subset \R^d$ iff
\begin{equation}
\label{probab}
u(x) = E^x\left[\exp \left( \int_0^{\tau_W} q(X_s) \, ds\right) u(X_{\tau_W})\right], \quad x \in W,
\end{equation}
for every open bounded set $W$, with $\overline{W} \subset D$. Here $X_t$ is the symmetric $\alpha$-stable process in $\R^d$, $\tau_W$ the first exit time of $X_t$ from $W$, and we understand that the expectation in (\ref{probab}) is absolutely convergent.

It is possible to express the above probabilistic definition in analytic terms. Namely, it is known \cite[Theorem 5.5]{BB1999} that if $u$ is $q$-harmonic in open set $D \subset \R^d$ then $u$ is a weak solution of 
\begin{equation}
\label{weak}
\Delta^{\alpha/2}u + q u = 0, \quad \text{on} \quad D.
\end{equation}
Here $\Delta^{\alpha/2} : = -(-\Delta)^{\alpha/2}$ is the fractional Laplacian.  On the other hand if $D \subset \R^d$ is an open bounded set and $(D,q)$ is gaugeable then a weak solution of (\ref{weak}) is a $q$-harmonic function on $D$ after a modification on a set of Lebesgue measure zero (for more details see Preliminaries).

It is known \cite{BB1999} that if $u$ is $q$-harmonic in $D$ then it is continuous in $D$. The purpose of this paper is to derive further regularity results of $q$-harmonic functions. The main result is the following.

\begin{theorem}
\label{mainthm}
Let $\alpha \in (0,1]$, $d \in \N$ and $D \subset \R^d$ be an open bounded set. Assume that $q: D \to \R$ is H{\"o}lder continuous with H{\"o}lder exponent $\eta > 1 - \alpha$. Let $u$ be $q$-harmonic in $D$. If $u$ is nonnegative in $\R^d$ then $\nabla u(x)$ exists for any $x \in D$ and we have
\begin{equation}
\label{nonnegative}
|\nabla u(x)| \le c \frac{u(x)}{\delta_D(x) \wedge 1}, \quad x \in D,
\end{equation}
where $\delta_D(x) = \dist(x,\partial D)$ and $c = c(\alpha,d,\eta,q)$. 

If $u$ is not nonnegative in $\R^d$ but $\|u\|_{\infty} < \infty$ then $\nabla u(x)$ exists for any $x \in D$ and we have
\begin{equation}
\label{norm}
|\nabla u(x)| \le c \frac{\|u\|_{\infty}}{\delta_D(x) \wedge 1}, \quad x \in D,
\end{equation}
where  $c = c(\alpha,d,\eta,q)$.
\end{theorem}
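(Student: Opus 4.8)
Since $q$ is H\"older continuous on the bounded set $D$ it is bounded there; put $M=\|q\|_{L^\infty(D)}$. Fix $x_0\in D$. By Khasminskii's lemma (using $E^x\tau_{B'}\le c\rho^\alpha$ for a ball $B'$ of radius $\rho$) choose $r_0=r_0(\alpha,d,M)\le1$ so that every ball of radius $\le r_0$ is gaugeable for $q$, $\sup_{x\in B'}E^x[\exp(\int_0^{\tau_{B'}}q(X_s)\,ds)]\le2$. Put $r=\tfrac18(\delta_D(x_0)\wedge r_0)$ and $B=B(x_0,r)$, so $B(x_0,4r)\subset D$ and $r^{-1}\le 8r_0^{-1}(\delta_D(x_0)\wedge1)^{-1}$. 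It suffices to show $\nabla u(x_0)$ exists with $r|\nabla u(x_0)|\le c\|u\|_{L^\infty(B(x_0,2r))}$: then \eqref{nonnegative} follows from the Harnack inequality for nonnegative $q$-harmonic functions, $\|u\|_{L^\infty(B(x_0,2r))}\le c\,u(x_0)$ (\cite{BB1999}), and \eqref{norm} from $\|u\|_{L^\infty(B(x_0,2r))}\le\|u\|_\infty$.

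On $B$ expand the $q$-harmonicity of $u$ into the Schr\"odinger perturbation series $u=\sum_{n\ge0}u_n$, $u_0=H_Bu$, $u_{n+1}=G_B(qu_n)$, where $H_Bv(x)=E^x[v(X_{\tau_B})]$ and $G_B$ is the Green operator of $B$ for $\Delta^{\alpha/2}$; gaugeability gives $\|u_n\|_{L^\infty(B)}\le2^{-n}\|u_0\|_{L^\infty(B)}$, and summing the absolutely convergent series,
\begin{equation}\label{pert}
u=H_Bu+G_B(qu)\qquad\text{on }B.
\end{equation}
I combine \eqref{pert} with two facts. (i) $H_Bu$ is $\alpha$-harmonic, hence $C^\infty$, in $B$; from Riesz's explicit Poisson kernel $P_B(x,y)$ of the ball one gets $|\nabla_xP_B(x,y)|\le c\,\delta_B(x)^{-1}P_B(x,y)$ with $\delta_B(x)=\dist(x,\partial B)$, so $|\nabla(H_Bv)(x)|\le c\,\delta_B(x)^{-1}\|v\|_\infty$, and $\le c\,\delta_B(x)^{-1}(H_Bv)(x)$ when $v\ge0$. (ii) \emph{Main lemma:} for $g$ bounded on $B$ and $\beta$-H\"older near a point $x\in B$ with $\beta>1-\alpha$, $G_Bg$ is differentiable at $x$ and
\begin{equation}\label{diffform}
\nabla(G_Bg)(x)=\int_B\nabla_xG_B(x,z)\,(g(z)-g(x))\,dz+g(x)\,\nabla_xE^x\tau_B ,
\end{equation}
the integral converging absolutely because $|\nabla_xG_B(x,z)|\le c\,|x-z|^{\alpha-d-1}$ near $z=x$ while $\alpha-d-1+\beta>-d$; more generally $G_B$ maps $C^\beta_{\mathrm{loc}}(B)$ into $C^{\alpha+\beta}_{\mathrm{loc}}(B)$, and one needs the scale-correct bound $|\nabla(G_Bg)(x_0)|\le c(r^{\alpha+\beta-1}[g]_{\beta}+r^{\alpha-1}\|g\|_{L^\infty(B)})$. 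For $\alpha>1$ the kernel $\nabla_xG_B$ is locally integrable and \eqref{diffform} is routine; for $\alpha\le1$ the cancellation $g(z)-g(x)$ is essential and forces $\beta>1-\alpha$.

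Now bootstrap. Since $H_Bu\in C^\infty(B)$ and $G_B$ maps bounded functions into $C^\gamma_{\mathrm{loc}}(B)$ for every $\gamma<\alpha$ (visible from Riesz's formula for $G_B$, or probabilistically from gradient bounds for the killed transition density), \eqref{pert} gives $u\in C^\gamma_{\mathrm{loc}}(B)$; then $qu\in C^{\min(\eta,\gamma)}_{\mathrm{loc}}$, so by (ii) $G_B(qu)\in C^{\min(\eta,\gamma)+\alpha}_{\mathrm{loc}}$ and $u\in C^{\min(\eta,\gamma)+\alpha}_{\mathrm{loc}}$. The interior H\"older exponent of $u$ thus increases by $\alpha$ per step, hence exceeds $1-\alpha$ after finitely many steps (here $\eta>1-\alpha$ is used at the cap), and a last application of \eqref{diffform} to \eqref{pert} gives $u\in C^1$ near $x_0$; in particular $\nabla u(x_0)$ exists. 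Finally differentiate \eqref{pert} at $x=x_0$ (where $\delta_B(x_0)=r$): by (i), (ii), $\|qu\|_{L^\infty(B)}\le M\|u\|_{L^\infty(B)}$ and the weighted bound $[qu]_{\beta}\le c\,r^{-\beta}\|u\|_{L^\infty(B(x_0,2r))}$ coming from the iterated interior Schauder bounds (for a fixed $\beta\in(1-\alpha,\eta)$), one obtains $r|\nabla u(x_0)|\le c\|u\|_{L^\infty(B(x_0,2r))}$, which is the reduction of the first paragraph; tracking constants, $c=c(\alpha,d,\eta,q)$.

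The main obstacle is the lemma (ii) / formula \eqref{diffform}: differentiability and sharp weighted gradient bounds for the Green potential $G_Bg$ of a merely H\"older density in the range $\alpha\le1$, where one cannot differentiate under the integral sign and $1-\alpha$ is exactly critical. I would prove it probabilistically from $G_B(x,z)=\int_0^\infty p^B_t(x,z)\,dt$ and the gradient estimates $|\nabla_xp^B_t(x,z)|\le c(t^{-1/\alpha}\wedge\delta_B(x)^{-1})p^B_t(x,z)$ for the killed heat kernel (or, on the ball, directly from the explicit formulas for $G_B$, $P_B$, $E^x\tau_B$), using an $\varepsilon$-truncation $\int_{B\setminus B(x,\varepsilon)}G_B(x,z)g(z)\,dz$ whose gradient converges since the surface terms on $\{|x-z|=\varepsilon\}$ vanish (oddness of $\nabla_xG(x,z)$ together with the H\"older gain $\varepsilon^{\alpha+\beta-1}\to0$); the crux is to keep all constants uniform in the scaling radius $r$.
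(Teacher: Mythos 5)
Your proposal is correct in outline but follows a genuinely different route from the paper. You bootstrap the \emph{full} interior H\"older seminorm of $u$ via a Schauder-type mapping property $G_B:C^\beta_{\mathrm{loc}}\to C^{\alpha+\beta}_{\mathrm{loc}}$, culminating in your lemma (ii) for $\beta>1-\alpha$. The paper never establishes (or needs) such a Schauder estimate: it bootstraps only the \emph{antisymmetrized} increments $|u(x)-u(\hat x)|\le c\,\sup|u|\,(\delta_D(z)\wedge1)^{-1}|x-z|^{\beta}$, where $\hat x$ is the reflection of $x$ through the hyperplane $\{y_i=z_i\}$, and the engine of the improvement $\beta\mapsto(\beta+\alpha)\wedge1$ is the pointwise bound $0\le G_B(x,y)-G_B(\hat x,y)\le c\,|x-\hat x|\,|x-y|^{\alpha-d}|\hat x-y|^{-1}$ (Lemma \ref{Greenupper1}), proved by identifying $G_B(x,y)-G_B(\hat x,y)$ with the Green function of the subordinate \emph{killed} Brownian motion in the half-ball via the reflection principle, and then dominating it by the free half-space kernel. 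The extra decay factor $|x-\hat x|/|\hat x-y|$ is exactly what replaces your cancellation $g(z)-g(x)$ in the nonintegrable range $\alpha\le1$, and it reduces the whole regularity gain to an explicit one-dimensional kernel computation (Lemma \ref{betau}); differentiability itself is then obtained only at the center of a ball (Lemma \ref{gradientGreen}), where $\nabla E^x\tau_B=0$ and $\partial_{z_i}G_B(z,\cdot)$ is odd under the reflection, so your boundary/surface-term issues in \eqref{diffform} disappear. What your approach buys is a stronger intermediate conclusion (genuine interior $C^{\alpha+\beta}$ regularity at each stage) by standard analytic means; what it costs is that essentially all of the difficulty is concentrated in lemma (ii) -- the scale-uniform weighted bound $|\nabla(G_Bg)(x_0)|\le c(r^{\alpha+\beta-1}[g]_\beta+r^{\alpha-1}\|g\|_\infty)$ and the justification of differentiating the $\varepsilon$-truncated potential at the critical integrability threshold -- which you only sketch; this is precisely the part the paper's reflection device is designed to avoid. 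Two smaller points to watch if you carry your plan out: you must dodge the integer value $\alpha+\beta=1$ at intermediate steps (your freedom to take $\gamma<\alpha$ arbitrary does this, mirroring the paper's special handling of $k_0\alpha=1-\alpha$ and of $\alpha=1$, $\beta=0$), and the case $d=\alpha=1$ needs separate treatment since $G_B$ is then logarithmic rather than of Riesz type.
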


The existence of $\nabla u(x)$ and similar gradient estimates are well known in the classical case for $\alpha =2$, see e.g. \cite{CZ1990} and for $\alpha \in (1,2)$, see \cite{BKN2002}. These results for $\alpha \in (1,2]$ were shown under the assumption that $q \in \calJ^{\alpha - 1}$. The biggest difference between the cases $\alpha \in (0,1]$ and $\alpha \in (1,2]$ is the fact that for $\alpha \in (0,1]$ the function $y \to |\nabla_x G_D(x,y)|$ is not integrable while for $\alpha \in (1,2]$ is integrable. Here $G_D(x,y)$ is the Green function for $\Delta^{\alpha/2}$ with Dirichlet condition on $D^c$. The fact that $y \to |\nabla_x G_D(x,y)|$ is integrable was widely used in \cite{BKN2002} for $\alpha \in (1,2)$, see e.g. \cite[Lemma 5.2]{BKN2002}. For $\alpha \in (0,1]$ more complicated method must be used. Key ingredients of the method for $\alpha \in (0,1]$ may be briefly described  as the combination of some estimates of the Green function and some self-improving estimates used in the proof of Theorem \ref{mainthm}. The proof of the estimates of the Green function is mainly probabilistic. It is based on the representation of symmetric $\alpha$-stable processes as subordinated Brownian motions and the reflection principle for the Brownian motion. This probabilistic idea is similar to the one used in the paper by B. B{\"o}ttcher, R. Schilling, J. Wang, where they study couplings of subordinated Brownian motions, see Section 2 in \cite{BSW2011}. More remarks about these probabilistic methods are at the end of Section 3.

From analytic point of view Theorem \ref{mainthm} gives some regularity results for weak solutions of (\ref{weak}). It is worth to notice that regularity results of weak solutions of equations involving the fractional Laplacian have attracted a lot of attention recently, see e.g. \cite{KNV2007}, \cite{S2011}.

One may ask whether it is possible to weaken the assumption in Theorem \ref{mainthm} that $q$ is H{\"o}lder continuous with H{\"o}lder exponent $\eta > 1 - \alpha$. It occurs that the exponent $\eta = 1 - \alpha$ is critical in the following sense.
\begin{proposition}
\label{counterexample}
For any $\alpha \in (0,1]$, $d \in \N$ and any open bounded set $D \subset \R^d$ there exists $q: D \to [0,\infty)$ which is $1- \alpha$ H{\"o}lder continuous, a function $u: \R^d \to [0,\infty)$ which is $q$-harmonic in $D$ and a point $z \in D$ such that $\nabla u(z)$ does not exist.
\end{proposition}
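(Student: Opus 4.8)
The plan is to construct an explicit counterexample by choosing $q$ and $u$ directly, guided by the following heuristic: near a point $z$, the fractional Schrödinger equation $\Delta^{\alpha/2}u + qu = 0$ behaves, to leading order, like $\Delta^{\alpha/2}u = -q(z)u(z)$, a nonhomogeneous equation whose particular solution has the form $u(x) \sim u(z) - c\, q(z)\, u(z)\, |x-z|^{\alpha} + \dots$. Since $\alpha \le 1$, the function $|x-z|^{\alpha}$ is not differentiable at $z$, so if we can arrange that the $|x-z|^{\alpha}$ term genuinely appears in $u$ (i.e. is not cancelled by a smooth correction) while keeping $q$ only $(1-\alpha)$-Hölder, we win. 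The coefficient of $|x-z|^\alpha$ is controlled by $q(z)$, and the obstruction to making $u$ smoother is precisely that $q$ fails to be Hölder of order $>1-\alpha$ — so the critical threshold emerges naturally from this scaling.

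Concretely, I would work on $\R^d$ and, after translating, assume $z = 0 \in D$. Fix a smooth cutoff-type profile: let $\phi: \R^d \to [0,\infty)$ be a fixed smooth, compactly supported, radial function with $\phi \equiv 1$ on a neighborhood $B(0,r)$ of $0$ with $\overline{B(0,2r)} \subset D$. Define
\begin{equation}
\label{ucandidate}
u(x) = \phi(x)\bigl(a - |x|^{\alpha}\bigr) + v(x),
\end{equation}
where $a > 0$ is chosen large enough that $a - |x|^\alpha > 0$ on $\supp \phi$, so $u \ge 0$ on $\R^d$ once $v$ is chosen nonnegative and suitably small, and $v$ is a correction term (harmonic-type, or simply another smooth bump) used to guarantee nonnegativity and to absorb lower-order errors. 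Then compute $g(x) := \Delta^{\alpha/2}u(x)$. On $B(0,r)$ the contribution of $\phi \cdot (a-|x|^\alpha)$ to $\Delta^{\alpha/2}u$ is, up to a smooth remainder, a constant multiple of a known explicit function: indeed $\Delta^{\alpha/2}(-|x|^\alpha)$ (interpreted appropriately with the cutoff) is, near $0$, a nonzero constant $C_{\alpha,d}$ plus something controlled, while $\Delta^{\alpha/2}$ of the smooth parts is smooth and in fact Hölder of every order on compacts away from $\partial D$. Now define
\begin{equation}
\label{qcandidate}
q(x) = -\frac{\Delta^{\alpha/2}u(x)}{u(x)}, \qquad x \in D,
\end{equation}
which is exactly the $q$ making $\Delta^{\alpha/2}u + qu = 0$ hold pointwise on $D$; the denominator is bounded away from $0$ on $\supp\phi$ by the choice of $a$, and outside a neighborhood of $0$ we have freedom (we can take $u$ smooth and positive there, e.g. given by a fixed positive smooth function, so $q$ is as smooth as we like there). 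The point is that near $0$, $q(x) = -\Delta^{\alpha/2}u(x)/u(x)$ inherits the regularity of $\Delta^{\alpha/2}u$, and one shows $\Delta^{\alpha/2}u$ is $(1-\alpha)$-Hölder at $0$ (coming from differentiating terms like $|x|^\alpha$ once: $|\nabla |x|^\alpha| \sim |x|^{\alpha-1}$, whose ``Hölder modulus'' is of order $1-\alpha$ in the relevant sense), hence $q$ is $(1-\alpha)$-Hölder on $D$, but $u$ itself contains the term $-|x|^\alpha$ near $0$ and therefore $\nabla u(0)$ does not exist. Finally I would verify that $u$ is genuinely $q$-harmonic in the probabilistic sense \eqref{probab}, not merely a pointwise solution: since $D$ is bounded, $q$ is bounded on $D$ (hence in the Kato class), and $u$ is continuous and bounded, one uses the equivalence quoted in the Preliminaries — a bounded continuous weak/pointwise solution of \eqref{weak} with $(D,q)$ gaugeable (automatic for bounded $D$ and bounded $q$, after possibly shrinking $D$) agrees with the $q$-harmonic function — to conclude \eqref{probab} holds.

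The main obstacle will be the delicate regularity bookkeeping near $0$: one must show precisely that $\Delta^{\alpha/2}$ applied to the cut-off function $\phi(x)(a-|x|^\alpha)$ is $(1-\alpha)$-Hölder continuous (and no better) in a neighborhood of $0$, and that the cutoff $\phi$ and the correction $v$ do not accidentally improve the regularity of $q$ past $1-\alpha$ nor destroy nonnegativity of $u$. Handling $\alpha = 1$ separately may be needed since $|x|^{\alpha} = |x|$ is Lipschitz but still fails to be differentiable at $0$, and $\Delta^{1/2}|x|$ has a logarithmic feature; there one checks that $q$ is $0$-Hölder, i.e. merely bounded (which is the $\eta = 1-\alpha = 0$ statement), so the proposition's conclusion for $\alpha=1$ is that boundedness of $q$ alone does not suffice — consistent with the statement. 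A cleaner alternative that sidesteps some of the cutoff analysis is to choose $D = B(0,R)$ a ball and use the explicit formula for $\Delta^{\alpha/2}$ of radial power functions together with the known Green function $G_D$, writing $u = G_D[\,g\,] + (\text{explicit harmonic piece})$ for a cleverly chosen $g$; I would fall back on this if the direct construction gets mired in error terms.
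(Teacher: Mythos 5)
Your construction has a fatal computational gap at its core: the ansatz $u(x)=\phi(x)(a-|x|^{\alpha})+v(x)$ does not produce a bounded $q$, let alone a $(1-\alpha)$-H{\"o}lder one. The exponent $\alpha$ is exactly the critical one for which $\Delta^{\alpha/2}$ of the profile diverges: at $x=0$ the defining integral is
$\calA(d,-\alpha)\int \phi(y)\,|y|^{\alpha}\,|y|^{-d-\alpha}\,dy=\calA(d,-\alpha)\int \phi(y)\,|y|^{-d}\,dy=+\infty$,
and for small $x\neq 0$ a scaling of the near-field part of the principal-value integral shows $\Delta^{\alpha/2}\bigl(\phi\cdot(a-|\cdot|^{\alpha})\bigr)(x)\sim -c\log(1/|x|)$. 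So $q(x)=-\Delta^{\alpha/2}u(x)/u(x)$ blows up logarithmically at the very point $z=0$ where you want non-differentiability; it is not $(1-\alpha)$-H{\"o}lder, not bounded, and not even in the Kato class argumentation you invoke. This is not a cutoff artifact confined to $\alpha=1$ (where you do acknowledge a ``logarithmic feature''): the heuristic $\Delta^{\alpha/2}|x|^{\alpha}\approx\mathrm{const}$ is false for every $\alpha\in(0,1]$, because the p.v.\ integral for the homogeneous profile diverges at infinity precisely at exponent $\alpha$ and the cutoff converts that into a logarithm at the origin. Related to this, the inference ``$|\nabla|x|^{\alpha}|\sim|x|^{\alpha-1}$, hence $\Delta^{\alpha/2}u$ is $(1-\alpha)$-H{\"o}lder'' is not an argument: a function that is $C^{\alpha}$ and no better generally does not even have a continuous $\Delta^{\alpha/2}$. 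Even if the profile were repaired (e.g.\ leading term $a-|x|$ for $\alpha<1$, which does yield a bounded $\Delta^{\alpha/2}$ with modulus $|x|^{1-\alpha}$ at the origin), you would still owe: nonnegativity of $q$ (the statement requires $q:D\to[0,\infty)$, and $-\Delta^{\alpha/2}u/u$ has no prescribed sign away from $0$); gaugeability/$q$-harmonicity on the \emph{given} $D$ (you cannot shrink $D$, and a globally defined $q$ of uncontrolled size need not make $(W,q)$ gaugeable); and a separate treatment of $\alpha=1$.

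For contrast, the paper goes the other way around: it prescribes $q$ explicitly as the one-sided bump $q(x)=\mathds{1}_{B(w,r)}(x)(r^{2}-|x-w|^{2})^{1-\alpha}$ with $r$ small (so $(D,q)$ is gaugeable by Khasminski's lemma), takes $u$ to be the gauge function $E^{x}(e_{q}(\tau_{D}))$ — automatically nonnegative, bounded and regular $q$-harmonic — and proves by contradiction that $\nabla u$ fails to exist at a point $z\in\partial B(w,r)$, using the sharp lower bound on the antisymmetrized Green function $G_{B}(x,y)-G_{B}(\hat{x},y)$ (Lemma \ref{lowerboundGreen}, via Song's estimates for the subordinate killed Brownian motion) to show that the symmetric difference quotient of $G_{B}(qu)$ blows up logarithmically. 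That route sidesteps entirely the need to compute $\Delta^{\alpha/2}$ of any explicit profile, which is exactly where your construction breaks.
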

The proof of this proposition is based on the estimates of the Green function of the killed Brownian motion subordinated by the $\alpha/2$-stable subordinator. These estimates were obtained by R. Song in \cite{S2004}.

When a $q$-harmonic function $u$ vanishes continuously near some part of the boundary of $D$ and $D \subset \R^d$ is a bounded Lipschitz domain then the estimates obtained in Theorem \ref{mainthm} are sharp near that part of the boundary. 
\begin{theorem}
\label{sharpthm}
Let $\alpha \in (0,1]$, $d \in \N$, $D \subset \R^d$ be a bounded Lipschitz domain and $q: D \to \R$ be H{\"o}lder continuous with H{\"o}lder exponent $\eta > 1 - \alpha$. Let $V \subset \R^d$ be open and let $K$ be a compact subset of $V$. Then there exist constants $c = c(D,V,K,\alpha,q,\eta)$ and $\eps = \eps(D,V,K,\alpha,q,\eta)$ such that for every function $u: \R^d \to [0,\infty)$ which is bounded on $V$, $q$-harmonic in $D \cap V$ and vanishes in $D^c \cap V$ we have
\begin{equation*}
c^{-1} \frac{u(x)}{\delta_D(x)} \le |\nabla u(x)| \le c \frac{u(x)}{\delta_D(x)}, \quad x \in K \cap D, \quad \delta_D(x) < \eps.
\end{equation*}
\end{theorem}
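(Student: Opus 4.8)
\emph{Upper bound.} This is contained in Theorem~\ref{mainthm}. Since $u$ is nonnegative on $\R^d$ and $q$-harmonic in the bounded open set $D\cap V$, with $q|_{D\cap V}$ H\"older of exponent $\eta>1-\alpha$, Theorem~\ref{mainthm} gives the existence of $\nabla u$ on $D\cap V$ together with $|\nabla u(y)|\le c\,u(y)/(\delta_{D\cap V}(y)\wedge1)$ there, with $c=c(\alpha,d,\eta,q)$. As $K$ is a compact subset of the open set $V$, $d_0:=\dist(K,\partial V)>0$; if $\eps$ is small enough (e.g.\ $\eps\le\min(d_0,1)$), then $\delta_{D\cap V}(y)\wedge1=\delta_D(y)$ for every $y\in K\cap D$ with $\delta_D(y)<\eps$, which is the right-hand inequality.

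\emph{Lower bound, set-up.} Fix $x\in K\cap D$ with $r:=\delta_D(x)$ small, let $z\in\partial D$ satisfy $|x-z|=r$ and put $\nu:=(x-z)/r$, so $z+r\nu=x$. Because $B(x,r)\subset D$, $\delta_D$ is $1$-Lipschitz and $z\in\partial D$, one checks that $\delta_D(z+t\nu)=t$ for all $t\in(0,r]$; in particular the segment $\{z+t\nu:0<t\le r\}$ lies in $D$, and (for $\eps$ small) $z\in D^c\cap V$, so $u(z)=0$. Fix a small $r_0>0$, independent of $x$, with $B(z,2r_0)\subset V$, and set $W:=D\cap B(z,r_0)$. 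By a standard exhaustion $u$ is $q$-harmonic in $W$ and vanishes continuously on $\partial D\cap B(z,r_0)$; since $q$ is bounded and $r_0$ small, $(W,q)$ is gaugeable and the nonnegative $\alpha$-harmonic function $u_0(y):=E^y[u(X_{\tau_W})]$ satisfies $u_0\le u\le C_0u_0$ on $W$, with $C_0$ independent of $u$ and $z$.

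\emph{Lower bound, argument.} The boundary Harnack principle together with the Carleson growth estimate, applied to $u_0$ in the Lipschitz set $D\cap B(z,r_0)$ and transferred to $u$ via $u\asymp u_0$, yield constants $C$ and $\gamma>0$, uniform over all admissible $u$ and over $z$ in a compact subset of $\partial D$, with $u(z+s\nu)\le C(s/t)^{\gamma}u(z+t\nu)$ for $0<s\le t\le r$. By Theorem~\ref{mainthm} and the bootstrap $\Delta^{\alpha/2}u=-qu$ (with $qu$ H\"older), $\nabla u$ exists and is locally bounded in $D\cap V$, so $g(t):=u(z+t\nu)$ is locally Lipschitz on $(0,r]$ with $g'(t)=\nu\cdot\nabla u(z+t\nu)$ a.e.\ and $g(0+)=0$. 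The growth estimate gives $g(\theta r)\le C\theta^{\gamma}u(x)$, hence $\int_{\theta r}^{r}g'\,dt=u(x)-g(\theta r)\ge\tfrac12u(x)$, while the upper bound gives $|g'(t)|\le|\nabla u(z+t\nu)|\le c\,u(z+t\nu)/t\le cC\,u(x)\,r^{-\gamma}t^{\gamma-1}$, so $\int_{\theta r}^{\theta_0 r}|g'|\,dt\le(cC/\gamma)\theta_0^{\gamma}u(x)\le\tfrac14u(x)$, once the fixed constants $0<\theta<\theta_0<1$ are chosen small enough (depending only on $\alpha$, $q$ and the Lipschitz character of $D$). Therefore $\int_{\theta_0 r}^{r}|g'|\,dt\ge\tfrac14u(x)$, and there is $t^{\ast}\in(\theta_0 r,r)$ with $|\nabla u(z+t^{\ast}\nu)|\ge u(x)/(4r)$: the lower bound already holds at a point whose distance to $\partial D$ is comparable to $\delta_D(x)$, but not yet necessarily at $x$. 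To reach $x$, argue by contradiction and blow up. If the lower bound failed there would be admissible $u_n$, points $x_n\in K\cap D$ with $\rho_n:=\delta_D(x_n)\to0$ and nearest points $z_n\in\partial D$ with $\rho_n|\nabla u_n(x_n)|/u_n(x_n)\to0$; the functions $v_n(y):=u_n(z_n+\rho_n y)/u_n(x_n)$ are $q_n$-harmonic with $q_n(y)=\rho_n^{\alpha}q(z_n+\rho_n y)\to0$, and, by the uniform boundary Harnack, Carleson and interior $C^{1,\beta}$ estimates, a subsequence converges locally uniformly on $\overline G$ and in $C^{1}_{\mathrm{loc}}(G)$ to a nonnegative $w$ which is $\alpha$-harmonic in the blow-up domain $G$ of $D$ at $z_\infty:=\lim z_n$ (a Lipschitz cone, a half-space if $\partial D$ is differentiable at $z_\infty$), vanishes on $G^c$, satisfies $w(\nu_\infty)=1$ with $\nu_\infty:=\lim\nu_n$ an interior point of $G$, and has $\nabla w(\nu_\infty)=0$. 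But a nonzero nonnegative $\alpha$-harmonic function on such a cone, vanishing on the complement, is near the vertex a positive multiple of the homogeneous minimal profile — which in the half-space case is $c\,(y\cdot\nu_\infty)_+^{\alpha/2}$ — whose gradient does not vanish at interior points; this contradiction proves the left-hand inequality, with $c$ and $\eps$ depending only on $D,V,K,\alpha,q,\eta$.

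\emph{Main obstacle.} The reduction to the $\alpha$-harmonic case via gaugeability, the Harnack chains, the one-dimensional estimates and the cone geometry of Lipschitz domains are routine; the substantive points are (i) recording the boundary Harnack, Carleson and interior $C^{1,\beta}$ estimates in scale-invariant form with constants uniform over the admissible class of $u$ and locally uniform in the boundary point (here is where $\eta>1-\alpha$ is used, through the $C^{1,\beta}$-compactness), and, above all, (ii) the last step — identifying the blow-up limit with the homogeneous profile of its cone, equivalently a Hopf/Liouville-type statement that a nonzero nonnegative $\alpha$-harmonic function on a Lipschitz cone vanishing outside has nonvanishing gradient at interior points. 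I expect (ii) to be the main difficulty.
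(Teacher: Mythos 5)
Your upper bound is exactly the paper's (Theorem~\ref{mainthm} plus localization to $V$), and your intermediate observation --- integrating $g(t)=u(z+t\nu)$ along the normal segment, using the Carleson-type decay $u(z+s\nu)\le C(s/t)^{\gamma}u(z+t\nu)$ and the already-proved upper gradient bound to produce \emph{some} point $t^{\ast}$ with $|\nabla u(z+t^{\ast}\nu)|\ge u(x)/(4r)$ --- is correct as far as it goes. But it does not prove the theorem, and the step you add to get from $t^{\ast}$ to the arbitrary point $x$ contains a genuine gap. The blow-up argument rests on three things that are not available: (i) a Lipschitz domain need not have a tangent cone at a boundary point, so ``the blow-up domain $G$ of $D$ at $z_\infty$'' is not well defined (subsequential limits of $(D-z_n)/\rho_n$ are only special Lipschitz graph domains, not cones, and the homogeneous-profile classification you invoke then has no meaning); (ii) the $C^{1}_{\mathrm{loc}}$ convergence needed to pass $\nabla v_n(\nu_n)\to\nabla w(\nu_\infty)$ requires interior $C^{1,\beta}$ estimates uniform in $n$, which nothing in the paper provides (Theorem~\ref{mainthm} gives existence and a bound for $\nabla u$, not H\"older continuity of $\nabla u$); and (iii), as you yourself flag, the concluding Hopf/Liouville statement --- that a nonnegative $\alpha$-harmonic function on the limit domain vanishing outside has nonvanishing gradient at interior points --- is left unproved. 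Point (iii) is not a technical footnote: it \emph{is} the $q\equiv 0$ case of the lower bound you are trying to prove, so the argument defers the entire substance of the theorem to an unestablished claim.

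The paper's route avoids all of this by a perturbation argument. In a boundary box $\Delta_r$ one writes $u=u^{\Delta_r}+G_r(qu)$ with $u^{\Delta_r}=E^{\cdot}u(X_{\tau_{\Delta_r}})$ $\alpha$-harmonic; the lower gradient bound $|\nabla u^{\Delta_r}(x)|\ge c\,u^{\Delta_r}(x)/\delta_{\Delta_r}(x)$ for nonnegative $\alpha$-harmonic functions vanishing below the Lipschitz graph is imported from \cite[Lemma 4.5]{BKN2002} (this is precisely the Hopf-type fact you are missing); and the new ingredient, Lemma~\ref{uppereps}, shows via Theorem~\ref{mainthm} and Lemma~\ref{gradientGreen} that $|\nabla G_r(qu)(x)|\le \eps\,c\,u(x)/\delta_{\Delta_r}(x)$ once $r$ is small, so the perturbation cannot destroy the lower bound. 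If you want to salvage your approach, the honest fix is to prove (or cite) the $\alpha$-harmonic lower bound in Lipschitz boxes directly and then run the paper's perturbation, rather than to reach it through a blow-up.
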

Similar result was obtained for $\alpha = 2$ in \cite{BP1999} and for $\alpha \in (1,2)$ in \cite{BKN2002}, see Theorem 5.1.

As an application of our main result we obtain gradient estimates of eigenfunctions of the eigenvalue problem of the fractional Schr{\"o}dinger operator with Dirichlet boundary conditions. These estimates are formulated and proved in Section 6.

As another application of our main result we show for $\alpha \in (0,1)$ that under some assumptions on $q$ a weak solution of $\Delta^{\alpha/2} u + q u = 0$ is in fact a strong solution. Note that in the following corollary we do not have to assume that $(D,q)$ is gaugeable.
\begin{corollary}
\label{weakstrong}
Let $\alpha \in (0,1)$, $d \in \N$ and $D \subset \R^d$ be an open bounded set. Assume that $q: D \to \R$ is H{\"o}lder continuous with H{\"o}lder exponent $\eta > 1 - \alpha$ and either $u$ is nonnegative on $\R^d$ or $\|u\|_{\infty} < \infty$. If $u$ is a weak solution of (\ref{weak}) then (after a modification on a set of Lebesgue measure zero) $u$ is continuous on $D$ and it is a strong solution of (\ref{weak}).
\end{corollary}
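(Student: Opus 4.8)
The plan is to bootstrap from the weak solution to the probabilistic representation, apply Theorem \ref{mainthm} to gain a pointwise gradient bound, and then use that bound together with a pointwise form of the equation to upgrade to a classical (strong) solution. First I would fix an arbitrary point $x_0 \in D$ and a small ball $W = B(x_0,r)$ with $\overline{W} \subset D$. On such a small ball one can arrange $(W,q)$ to be gaugeable: since $q$ is bounded on $\overline{W}$ (being H\"older continuous on $D$ and $\overline{W}$ compact) and $|W|$ is small, the standard Khasminskii-type argument shows $\sup_{x\in W} E^x[\exp(\int_0^{\tau_W} q(X_s)\,ds)] < \infty$. By the analytic-probabilistic correspondence quoted in the Introduction (\cite[Theorem 5.5]{BB1999} and its converse under gaugeability), the weak solution $u$ of (\ref{weak}) agrees a.e.\ on $W$ with the function $\tilde u(x) = E^x[\exp(\int_0^{\tau_W} q(X_s)\,ds)\,u(X_{\tau_W})]$, which is $q$-harmonic in $W$, hence continuous in $W$. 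Doing this on a countable cover of $D$ by such balls and gluing (the continuous representatives agree a.e.\ on overlaps, hence everywhere), we obtain a single continuous modification of $u$ that is $q$-harmonic in $D$. Note the hypothesis that $u$ is either nonnegative on $\R^d$ or bounded is exactly what is needed to make the expectation defining $\tilde u$ absolutely convergent and to feed into Theorem \ref{mainthm}.

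Next I would invoke Theorem \ref{mainthm}: since $\alpha \in (0,1) \subset (0,1]$, $D$ is open and bounded, $q$ is H\"older with exponent $\eta > 1-\alpha$, and $u$ is the continuous $q$-harmonic modification which is nonnegative on $\R^d$ (or bounded), we conclude $\nabla u(x)$ exists for every $x \in D$ with $|\nabla u(x)| \le c\,(u(x) \vee \|u\|_\infty)/(\delta_D(x)\wedge 1)$. In particular $\nabla u$ is locally bounded on $D$. The remaining task is to show that for each $x_0 \in D$ the pointwise value $(-\Delta)^{\alpha/2} u(x_0)$, defined by the principal-value integral
\begin{equation*}
(-\Delta)^{\alpha/2} u(x_0) = c_{d,\alpha}\,\mathrm{p.v.}\int_{\R^d} \frac{u(x_0) - u(y)}{|x_0 - y|^{d+\alpha}}\,dy,
\end{equation*}
is well defined (the integral converges) and equals $q(x_0)u(x_0)$, so that $u$ solves (\ref{weak}) in the classical pointwise sense.

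For convergence of the singular integral at $x_0$: split into $|y - x_0| < r$ and $|y - x_0| \ge r$ for $W = B(x_0,r) \subset\subset D$. The far part is absolutely convergent because $u$ is bounded on $\overline W$ and integrable-at-infinity against $|x_0 - y|^{-d-\alpha}$ (again using $u \ge 0$ with the $q$-harmonic representation controlling tails, or $\|u\|_\infty<\infty$ directly). For the near part, since $\alpha < 1$ and $\nabla u$ is bounded on $\overline W$, the first-order Taylor estimate $|u(y) - u(x_0)| \le \|\nabla u\|_{L^\infty(W)}\,|y - x_0|$ gives an integrand bounded by $\|\nabla u\|_{L^\infty(W)}\,|y - x_0|^{-d-\alpha+1}$, which is integrable near $x_0$ precisely because $\alpha < 1$ — this is the one place the strict inequality $\alpha < 1$ (rather than $\alpha \le 1$) is essential, and also the main technical point of the argument. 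To identify the value as $q(x_0)u(x_0)$ I would use that $u$ is a weak solution: test (\ref{weak}) against a mollifier $\varphi_\varepsilon$ centered at $x_0$, so that $\int u\,(-\Delta)^{\alpha/2}\varphi_\varepsilon = \int q u\,\varphi_\varepsilon$; the right side tends to $q(x_0)u(x_0)$ by continuity of $qu$, while on the left side one passes the fractional Laplacian onto $u$ (justified by the just-established local integrability of the singular kernel against $u$, together with H\"older continuity of $q$ giving the needed modulus of continuity of $u$ near $x_0$ via the gradient bound) to get $\int (-\Delta)^{\alpha/2}u\cdot\varphi_\varepsilon \to (-\Delta)^{\alpha/2}u(x_0)$. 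I expect the delicate part to be this last limiting argument: making rigorous that the pointwise principal value exists and coincides with the distributional action requires a dominated-convergence estimate uniform in $\varepsilon$, for which the gradient bound from Theorem \ref{mainthm} combined with $\alpha<1$ is exactly the right tool.
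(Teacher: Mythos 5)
Your argument coincides with the paper's proof through its first half: you localize to a small ball on which $(B,q)$ is gaugeable via Khasminskii's lemma, invoke \cite[Theorem 5.5]{BB1999} to replace $u$ by a continuous $q$-harmonic modification, and then combine the gradient bound of Theorem \ref{mainthm} with the strict inequality $\alpha<1$ to show that the principal-value integral defining $\Delta^{\alpha/2}u$ converges near each point of $D$ --- this last convergence argument is exactly the paper's Lemma \ref{existence}, which in addition records the continuity of $x\mapsto\Delta^{\alpha/2}u(x)$ that your limiting argument also needs. Where you genuinely diverge is in identifying the pointwise value with $-q(x_0)u(x_0)$. You test the weak equation against mollifiers $\varphi_\eps$ and move the fractional Laplacian back onto $u$; the paper instead writes $u=E^xu(X_{\tau_B})+G_B(qu)$ via (\ref{representation}), observes that the first term is $\alpha$-harmonic so its fractional Laplacian vanishes on a smaller ball, and quotes \cite[Lemma 5.3]{BB2000} for $\Delta^{\alpha/2}(G_B(qu))=-qu$ a.e., upgrading to everywhere by continuity of both sides. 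The paper's route buys a shortcut: it never has to justify the duality identity $(u,\Delta^{\alpha/2}\varphi_\eps)=(\Delta^{\alpha/2}u,\varphi_\eps)$, which in your approach is the real technical content --- it requires a symmetrization/Fubini argument on the bilinear form, using the local Lipschitz bound from Theorem \ref{mainthm}, the membership $u\in\calL$, and $\alpha<1$, followed by continuity of $\Delta^{\alpha/2}u$ at $x_0$ to pass $\eps\to0$. Your route is viable and somewhat more self-contained (it does not rely on the identity $\Delta^{\alpha/2}G_Bf=-f$ from \cite{BB2000}), but the duality step should be carried out explicitly rather than asserted; as written it is the one point at which your proof is not yet complete.
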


The paper is organized as follows. Section 2 is preliminary; we collect here basic facts concerning the fractional Laplacian, the fractional Schr{\"o}dinger operator and $q$-harmonic functions. In Section 3 using probabilistic methods we obtain estimates of the Green function, which will be essential in the rest of the paper. In Section 4 the main result of the paper is proved. Section 5 contains proofs of Proposition \ref{counterexample} and Theorem \ref{sharpthm}. Section 6 concerns applications of the main result.

\section{Preliminaries}
Most of the terminology and facts presented here are taken from \cite{BB1999} and \cite{BB2000}. 
The notation $c(a,b,\ldots)$ means that $c$ is a constant depending only on $a,b,\ldots$. Constants are always positive and finite. We adopt the convention that constants may change their value from one use to another. As usual we write $x \wedge y = \min(x,y)$, $x \vee y = \max(x,y)$ for $x,y \in \R$, $\|u\|_{\infty} = \sup_{x \in \R^d} |u(x)|$ for any function $u: \R^d \to \R$, $B(x,r) = \{y \in \R^d: \, |x - y| < r\}$ for $x \in \R^d$, $r > 0$. By $e_i$, $i=1,\ldots,d$ we denote the standard basis in $\R^d$.

We denote by $(X_t,P^x)$ the standard rotation invariant ("symmetric") $\alpha$-stable process in $\R^d$, $\alpha \in (0,2]$ with the characteristic function $E^0 \exp(i \xi X_t) = \exp(- t |\xi|^{\alpha})$, $\xi \in \R^d$, $t \ge 0$. $E^x$ denotes the expectation with respect to the distribition $P^x$ of the process starting from $x \in \R^d$. We have $P^x(X_t \in A) = \int_A p(t,x,y) \, dy$, where $p(t,x,y) = p_t(y - x)$ is the transition density of $X_t$. 

For $\alpha < d$ the process $X_t$ is transient and the potential kernel of $X_t$ is given by
\begin{equation}
\label{Riesz}
K_{\alpha}(y - x) = \int_0^{\infty} p(t,x,y) \, dt = \frac{\calA(d,\alpha)}{|y - x|^{d - \alpha}}, \quad x,y \in \R^d,
\end{equation}
where $\calA(d,\gamma) = \Gamma((d - \gamma)/2)/(2^{\gamma}\pi^{d/2}|\Gamma(\gamma/2)|)$ \cite{BG1968}. When $\alpha \ge d$ the process is recurrent and it is appropriate to consider the so-called compensated kernels. Namely for $\alpha \ge d$ we put
$$
K_{\alpha}(y - x) = \int_0^{\infty} (p(t,x,y) - p(t,0,x_0)) \, dt,
$$ 
where $x_0 = 0$ for $\alpha > d = 1$, $x_0 = 1$ for $\alpha = d = 1$ and $x_0 = (0,1)$ for $\alpha = d = 2$. For $\alpha = d = 1$ we have
$$
K_{\alpha}(x) = \frac{1}{\pi} \log\left(\frac{1}{|x|}\right).
$$

For any open set $D \subset \R^d$ we put $\tau_D = \inf\{t \ge 0: \, X_t \notin D\}$ the first exit time of $X_t$ from $D$ and we denote by $p_D(t,x,y)$ the transition density of the process $X_t$ killed on exiting $D$. The transition density is given by the formula
$$
p_D(t,x,y) = p(t,x,y) - E^x(p(t - \tau_D, X(\tau_D),y), \tau_D < t), \quad \quad x,y \in D, \quad t > 0.
$$
We put $p_D(t,x,y) =0$ if $x \in D^c$ or $y \in D^c$. It is known that for each fixed $t > 0$ the function $p_D(t,\cdot,\cdot)$ is bounded and continuous on $D \times D$. 
 When $d > \alpha$ and $D \subset \R^d$ is an open set or $d = 1 \le \alpha$ and $D \subset \R^d$ is an open bounded set  we put 
$$
G_D(x,y) = \int_0^{\infty} p_D(t,x,y) \, dt, \quad x,y \in D,
$$
$G_D(x,y) = 0$ if $x \in D^c$ or $y \in D^c$. We call $G_D(x,y)$ {\it{the Green function}} for $D$. It is known that $G_D(x,\cdot)$ is continuous on $D \setminus \{x\}$. For any open bounded set $D \subset \R^d$ we define {\it{the Green operator}} $G_D$ for $D$ by 
$$
G_D f(x) = \int G_D(x,y) f(y) \, dy.
$$
We assume here that $f$ is a bounded Borel function $f: D \to \R$. We have 
$$
G_D f(x) = E^x \int_0^{\tau_D} f(X_s) \, ds.
$$

Now we briefly present basic definitions and facts concerning the fractional Laplacian and the fractional Schr{\"o}dinger operator. We follow the approach from \cite{BB1999}. We denote by $\calL$ the space of all Borel functions $f$ on $\R^d$ satisfying
$$
\int_{\R^d} \frac{|f(x)|}{(1 + |x|)^{d + \alpha}} \, dx < \infty.
$$
For $f \in \calL$ and $x \in \R^d$ we define
$$
\Delta^{\alpha/2} f(x) = \calA(d,-\alpha) \lim_{\eps \downarrow 0} 
\int_{|y - x| > \eps} \frac{f(y) - f(x)}{|y - x|^{d + \alpha}} \, dy,
$$
whenever the limit exists.

We say that a Borel function $q: \R^d \to \R$ belongs to {\it{the Kato class}} $\calJ^{\alpha}$ iff $q$ satisfies
$$
\lim_{r \downarrow 0} \sup_{x \in \R^d} \int_{|y - x| \le r} |q(y) K_{\alpha}(y - x)| \, dy = 0.
$$
For any $\alpha \in (0,2)$, $q \in \calJ^{\alpha}$ we call $\Delta^{\alpha/2} + q$ {\it{the fractional Schr{\"o}dinger}} operator.

Let $\alpha \in (0,2)$, $q \in \calJ^{\alpha}$ and $D \subset \R^d$ be an open set. For $u \in \calL$ such that $uq \in L_{\text{loc}}^1(D)$ we define the distribution $(\Delta^{\alpha/2} + q) u$ in $D$ by the formula 
$$
((\Delta^{\alpha/2} + q)u,\varphi) = (u,\Delta^{\alpha/2}\varphi + q\varphi), \quad \varphi \in C_c^{\infty}(D),
$$
(cf. Definition 3.14 in \cite{BB1999}). We will say that $u$ is {\it{a weak solution}} of 
\begin{equation}
\label{solution}
(\Delta^{\alpha/2} + q) u = 0
\end{equation}
on $D$ iff $u \in \calL$, $uq \in L_{\text{loc}}^1(D)$ and (\ref{solution}) holds in the sense of distributions in $D$. We will say that $u$ is {\it{a strong solution}} of (\ref{solution}) on $D$ iff $u \in \calL$, $uq \in L_{\text{loc}}^1(D)$ and (\ref{solution}) holds for any $x \in D$.

For $\alpha \in (0,2)$, $q \in \calJ^{\alpha}$ the multiplicative functional $e_q(t)$ is defined by $e_q(t) = \exp\left(\int_0^t q(X_s) \, ds \right)$, $t \ge 0$. For any open bounded set $D \subset \R^d$ the function
$$
u_D(x) = E^x(e_q(\tau_D))
$$
is called the {\it{gauge}} function for $(D,q)$; when it is bounded in $D$ we say that $(D,q)$ is {\it{gaugeable}}. There are several other equivalent conditions for gaugeability, in particular there is a condition in terms of the first Dirichlet eigenvalue of $\Delta^{\alpha/2} + q$ on $D$, see below.

Let $u$ be a Borel function on $\R^d$ and let $q \in \calJ^{\alpha}$. We say that $u$ is {\it{$q$-harmonic}} in an open set $D \subset \R^d$ iff
\begin{equation}
\label{qharm1}
u(x) = E^x\left[e_q(\tau_W)  u(X_{\tau_W})\right], \quad x \in W,
\end{equation}
for every bounded open set $W$ with $\overline{W} \subset D$. $u$ is called {\it{regular $q$-harmonic}} in $D$ iff
\begin{equation}
\label{qharmr}
u(x) = E^x\left[e_q(\tau_D)  u(X_{\tau_D}); \tau_D < \infty\right], \quad x \in D.
\end{equation}
We understand that the expectation in (\ref{qharm1}) and (\ref{qharmr}) is absolutely convergent. 

By the strong Markov property any regular $q$-harmonic function in $D$ is a  $q$-harmonic function in $D$. By \cite[Theorem 4.1]{BB2000} any $q$-harmonic function in $D$ is continuous in $D$. By \cite[(4.7)]{BB2000} any $q$-harmonic function in $D$ belongs to $\calL$ (when $D \ne \emptyset$). It follows that if $u$ is a $q$-harmonic function in $D$ then $uq \in L_{\text{loc}}^1(D)$.

Let $\alpha \in (0,2)$, $q \in \calJ^{\alpha}$. If $u$ is a $q$-harmonic function in an open set $D \subset \R^d$ then it is a weak solution of $(\Delta^{\alpha/2} + q) u = 0$ on $D$. Conversely assume that $D \subset \R^d$ is an open bounded set and $(D,q)$ is gaugeable. If a function $u$ is a weak solution of $(\Delta^{\alpha/2} + q) u = 0$ on $D$ then after a modification on a set of Lebesgue measure zero, $u$ is $q$-harmonic in $D$ (see \cite[Theorem 5.5]{BB1999}).

It is known that if $u$ is $q$-harmonic in open set $D \subset \R^d$ then, unless $u = 0$ on $D$ and $u = 0$ a.e. on $D^c$, $(W,q)$ is gaugeable for any open bounded set $W$ such that $\overline{W} \subset D$ (see \cite[Lemma 4.3]{BB2000}). We will often use the following representation of $q$-harmonic functions. If $u$ is $q$-harmonic in an open set $D \subset \R^d$ then for every open bounded $W$ with the exterior cone property such that $\overline{W} \subset D$ we have
\begin{equation}
\label{representation}
u(x) = E^x u(X_{\tau_W}) + G_W(qu)(x), \quad x \in D.
\end{equation}
This follows from \cite[Proposition 6.1]{BB2000} and continuity of $q$-harmonic functions.

By saying that $q: D \to \R$ is H{\"o}lder continuous with H{\"o}lder exponent $\eta > 0$ we understand that there exists a constant $c$ such that for all $x,y \in D$ we have $|q(x) - q(y)| \le c |x - y|^{\eta}$.

We finish this section with some basic information about the spectral problem for $\Delta^{\alpha/2} + q$. Assume that $D \subset \R^d$ is an open bounded set, $\alpha \in (0,2)$, $q \in \calJ^{\alpha}$. Let us consider the eigenvalue problem for the fractional Schr{\"o}dinger operator on $D$ with zero exterior condition
\begin{eqnarray}
\label{spectral1}
\Delta^{\alpha/2} \varphi + q \varphi &=& -\lambda \varphi \quad \quad \quad \text{on} \,\,\, D, \\
\label{spectral2}
 \varphi &=& 0 \quad \quad \quad \, \, \, \,  \text{on} \,\,\, D^c.
\end{eqnarray}
It is well known that for the problem (\ref{spectral1}-\ref{spectral2}) there exists a sequence of eigenvalues $\{\lambda_n\}_{n = 1}^{\infty}$ satisfying
$$
\lambda_1 < \lambda_2 \le \lambda_3 \le \ldots, \quad \quad \quad
\lim_{n \to \infty} \lambda_n = \infty,
$$
and a sequence of corresponding eigenfunctions $\{\varphi_n\}_{n = 1}^{\infty}$, which can be chosen so that they form an orthonormal basis in $L^2(D)$. All $\varphi_n$ are bounded and continuous on $D$ and $\varphi_1$ is strictly positive on $D$. It is also well known that gaugeability of $(D,q)$ is equivalent to $\lambda_1 > 0$ see \cite[Theorem 3.11]{CS1997}, cf. \cite[Theorem 4.19]{CZ1995}. We understand that (\ref{spectral2}) holds for all $x \in D^c$ and (\ref{spectral1}) holds for almost all $x \in D$. The eigenvalue problem (\ref{spectral1}-\ref{spectral2}) was studied in e.g. \cite{CS1997}, \cite{K1998} and very recently in \cite{K2011}.

For more systematic presentation of the potential theory of fractional Schr{\"o}dinger operators we refer the reader to \cite{BB1999} or to \cite{BBKRSV2009}.

\section{Estimates of the Green function}

In this section we fix $i \in \{1,\ldots,d\}$ and use the following notation
$$
H = \{(y_1,\ldots,y_d) \in \R^d: \, y_i > 0\},
$$
$$
H_0 = \{(y_1,\ldots,y_d) \in \R^d: \, y_i = 0\}.
$$
Let $R: \R^d \to \R^d$ be the reflection with respect to $H_0$. 
For any $x \in \R^d$ we put
$$
\hat{x} = R(x).
$$
We have $\hat{x} = x - 2 x_i e_i$, where $(e_1,\ldots,e_d)$ is the standard basis in $\R^d$ and $x = (x_1,\ldots,x_d)$. We say that a set $D \subset \R^d$ is {\it{symmetric with respect to $H_0$}} iff $R(D) = D$. For any set $D \subset \R^d$, which is symmetric with respect to $H_0$ we put
\begin{equation}
\label{plusminus}
D_+ = \{(y_1,\ldots,y_d) \in D: \, y_i > 0\}, \quad \quad  
D_- = \{(y_1,\ldots,y_d) \in D: \, y_i < 0\}.
\end{equation}

Let $B_t$ be the $d$-dimensional Brownian motion starting from $x \in \R^d$ (with the transition density $(4 \pi t)^{-d/2} e^{-|x - y|^2/(4t)}$) and $\eta_t$ be the $\alpha/2$-stable subordinator starting from zero, $\alpha \in (0,2)$, independent of $B_t$ ($E^{-s\eta_t} = e^{-t s^{\alpha/2}}$). It is well known that the $d$-dimensional symmetric $\alpha$-stable process $X_t$, $\alpha \in (0,2)$, starting from $x \in \R^d$ has the following representation
$$
X_t = B_{\eta_t}.
$$
Let
$$
T = \inf\{s \ge 0: \, B_s \in H_0\}.
$$
Assume that the Brownian motion $B_t$ starts from $x \in H$. We define
\begin{equation*}
    \hat{B}_t = 
    \begin{cases}
    \displaystyle
    R(B_t)  \quad \text{for} \quad t \le T \\[\medskipamount]
    \displaystyle
    B_t     \quad \quad \, \, \, \text{for} \quad t > T.
    \end{cases}
\end{equation*}
That is $\hat{B}_t$ is the mirror reflection of $B_t$ with respect to $H_0$ before $T$ and coincides with $B_t$ afterwards. It is well known that $\hat{B}_t$ is the Brownian motion starting from $\hat{x}$. Now set 
$$
\hat{X}_t = \hat{B}_{\eta_t}.
$$
$\hat{X}_t$ is the symmetric $\alpha$-stable process starting from $\hat{x}$. The above construction is taken from Section 2 in \cite{BSW2011}. When discussing probabilities of $B_{t}$, $\hat{B}_{t}$, $\eta_t$, $X_t$, $\hat{X}_t$ we will use $P_B^x$, $P_{B}^{\hat{x}}$, $P_{\eta}$, $P^x$, $P^{\hat{x}}$ respectively. 

Now we need to consider another process, which is a subordinated killed Brownian motion. We define it as follows
$$
\tilde{X}_t = (B^{H})_{\eta_t},
$$
where $B^{H}_t$ is the Brownian motion $B_t$ (starting from $x \in H$) killed on exiting $H$ and $\eta_t$ is the $\alpha/2$-stable subordinator starting from zero, independent of $B_t$. When discussing probabilities of $B_{t}^H$, $\tilde{X}_t$  we will use $P_{B^H}^{x}$, $\tilde{P}^x$ respectively.  The general theory of subordinated killed Brownian motions was studied in \cite{SV2003}. 

For any open set $D \subset \R^d$, which is symmetric with respect to $H_0$ we put
$$
\tilde{\tau}_{D_+} = \inf\{t \ge 0: \, \tilde{X}_t \notin D_+\},
$$
where $D_+$ is given by (\ref{plusminus}).

By $\tilde{p}_{D_+}(t,x,y)$ we denote the transition density of the process $\tilde{X}_t$ killed on exiting $D_+$. The idea of considering $\tilde{p}_{D_+}(t,x,y)$ comes from \cite[Section 4]{BK2004}.

Recall that $p_D(t,x,y)$ is the transition density of the symmetric $\alpha$-stable process killed on exiting  $D$. 
\begin{lemma}
\label{killed2}
Let $D \subset \R^d$ be an open set which is symmetric with respect to $H_0$. Then we have
$$
\tilde{p}_{D_+}(t,x,y) = p_D(t,x,y) - p_D(t,\hat{x},y), \quad \quad x,y \in D_+, \quad t > 0.
$$
\end{lemma}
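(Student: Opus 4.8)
The plan is to realise all three processes simultaneously on the probability space of the construction above, driven by one Brownian motion $B$ (started at $x\in D_+$) and one independent $\alpha/2$-stable subordinator $\eta$: put $X_t=B_{\eta_t}$ (law $P^x$), $\hat X_t=\hat B_{\eta_t}$ (law $P^{\hat x}$) and $\tilde X_t=B^H_{\eta_t}$ (law $\tilde P^x$), and let $P$ be the probability on this common space. Fix $x,y\in D_+$ and $t>0$, and introduce
$$
\sigma=\inf\{s\ge 0:\ \eta_s\ge T\},
$$
the first instant at which the subordinated clock has run past the hitting time $T$ of $H_0$ by $B$. Since $\eta$ is nondecreasing and $B$ is continuous with $B_0=x\in H$, on $\{t<\sigma\}$ one has $\eta_s<T$ and hence $X_s=B_{\eta_s}\in H$, $\hat X_s=R(X_s)$ and $\tilde X_s=X_s$ for every $s\le t$, whereas on $\{t\ge\sigma\}$ one has $\eta_s\ge T$ for $s\ge\sigma$, hence $X_s=\hat X_s$ for every such $s$, while $\tilde X_\sigma$ (and therefore $\tilde X_t$) lies in the cemetery.

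First I would write the two killed transition densities probabilistically, $p_D(t,x,y)\,dy=P(X_t\in dy,\ \tau_D^X>t)$ and $p_D(t,\hat x,y)\,dy=P(\hat X_t\in dy,\ \tau_D^{\hat X}>t)$, where $\tau_D^X,\tau_D^{\hat X}$ denote the exit times from $D$ of $X,\hat X$, and decompose each over $\{t<\sigma\}$ and $\{t\ge\sigma\}$. On $\{t<\sigma\}$ and for $y\in D_+$: the point $\hat X_t=R(X_t)$ lies in $R(H)$, the open lower half-space, which is disjoint from $D_+$, so the $\hat x$-term contributes nothing; and since $X_s\in H$ makes the events $\{X_s\in D\}$ and $\{X_s\in D_+\}$ coincide while $X_s=\tilde X_s$ for $s\le t$, we obtain $\{\tau_D^X>t\}\cap\{t<\sigma\}=\{\tilde\tau_{D_+}>t\}\cap\{t<\sigma\}$, so the $x$-term equals $P(\tilde X_t\in dy,\ \tilde\tau_{D_+}>t,\ t<\sigma)$. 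As $\tilde X$ is dead from time $\sigma$ on, in fact $\tilde p_{D_+}(t,x,y)\,dy=P(\tilde X_t\in dy,\ \tilde\tau_{D_+}>t,\ t<\sigma)$, so it remains only to see that the two $\{t\ge\sigma\}$-pieces cancel in the difference $p_D(t,x,y)-p_D(t,\hat x,y)$.

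That cancellation is the heart of the matter and is precisely where $R(D)=D$ is used. On $\{t\ge\sigma\}$ we have $X_t=\hat X_t$, so I only need the event identity $\{\tau_D^X>t\}=\{\tau_D^{\hat X}>t\}$ there. If $\tau_D^X>t\ge\sigma$, then $\hat X_s=X_s\in D$ for $\sigma\le s\le t$, while for $s<\sigma$ one has $X_s\in D\cap H=D_+$ and hence $\hat X_s=R(X_s)\in R(D_+)=D_-\subset D$ by symmetry of $D$; thus $\tau_D^{\hat X}>t$, and the reverse implication is symmetric. Consequently $P(X_t\in dy,\ \tau_D^X>t,\ t\ge\sigma)=P(\hat X_t\in dy,\ \tau_D^{\hat X}>t,\ t\ge\sigma)$, the $\{t\ge\sigma\}$-pieces cancel, and collecting the surviving terms gives $p_D(t,x,y)-p_D(t,\hat x,y)=\tilde p_{D_+}(t,x,y)$ for almost every $y\in D_+$; continuity in $y$ of both sides (the left one being a killed transition density) then promotes this to an identity for every $y\in D_+$.

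The part I expect to be most delicate is making the path decomposition at $\sigma$ airtight: one must verify that strictly before $\sigma$ the subordinated process genuinely stays in the open half-space $H$ (so that the conditions $X_s\in D$ and $X_s\in D_+$ really coincide and $\hat X_s$ is the true mirror image of $X_s$), that from $\sigma$ onwards the coupling has coalesced, and that the exit time of the pure-jump process $\tilde X$ from the open set $D_+$ depends only on the sampled positions $B_{\eta_s}$ --- all consequences of $\eta$ being nondecreasing, $B$ continuous, and, almost surely, $\eta$ jumping strictly past the level $T$. The remaining measure-theoretic points, namely passing from the statements about $P(\,\cdot\in dy)$ to the densities and from an almost-everywhere identity in $y$ to a pointwise one via continuity of killed transition densities, are routine.
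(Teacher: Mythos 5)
Your proposal is correct and follows essentially the same reflection-coupling argument as the paper: realize $X$, $\hat X$, $\tilde X$ on one space via $B$, $\hat B$, $B^H$ and the common subordinator, identify the surviving piece on $\{\eta_t<T\}$ with the subordinated killed process, and cancel the two pieces on $\{\eta_t>T\}$ using $R(D)=D$. Your decomposition at the stopping time $\sigma=\inf\{s:\eta_s\ge T\}$ is just a repackaging of the paper's decomposition on $\{\eta_t<T\}$ versus $\{\eta_t>T\}$, and you in fact spell out the event identity behind the cancellation in more detail than the paper does.
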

\begin{proof}
The proof is based on the reflection principle for the Brownian motion. 
Put 
$$
\hat{\tau}_{D} = \inf\{s \ge 0: \, (\hat{B})_{\eta_s} \notin D\}.
$$
Note that 
\begin{eqnarray*}
\tau_{D} &=& \inf\{s \ge 0: \, B_{\eta_s} \notin D\},\\
\tilde{\tau}_{D_+} &=& \inf\{s \ge 0: \, (B^H)_{\eta_s} \notin D_+\},
\end{eqnarray*}
Fix $x \in D_+$, $t > 0$ and a Borel set $A \subset D_+$. We have
\begin{eqnarray}
\nonumber
&& \tilde{P}^x\left(\tilde{X}_t \in A, \tilde{\tau}_{D_+} > t\right) \\
\nonumber
&& =E_{\eta} P_{B^H}^x \left( (B^{H})_{\eta_t} \in A, \tilde{\tau}_{D_+} > t \right) \\
\nonumber
&& =E_{\eta} P_B^x \left( B_{\eta_t} \in A, \eta_t < T, \tau_{D} > t \right) \\
\label{sum1}
&& =E_{\eta} P_B^x \left( B_{\eta_t} \in A, \tau_{D} > t \right)
- E_{\eta} P_B^x \left( B_{\eta_t} \in A, \eta_t > T, \tau_{D} > t \right).
\end{eqnarray}
Note that
$$
\left\{ B_{\eta_t} \in A, \eta_t > T, \tau_{D} > t \right\}
= \left\{ \hat{B}_{\eta_t} \in A, \hat{\tau}_{D} > t \right\}.
$$
Hence (\ref{sum1}) equals
\begin{eqnarray*}
&& E_{\eta} P_B^x \left( B_{\eta_t} \in A, \tau_{D} > t \right)
- E_{\eta} P_B^{\hat{x}} \left( \hat{B}_{\eta_t} \in A, \hat{\tau}_{D} > t \right) \\
&& = P^x \left( X_t \in A, \tau_{D} > t \right) 
- P^{\hat{x}} \left( \hat{X}_t \in A, \hat{\tau}_{D} > t \right) \\
&& = \int_A p_D(t,x,y) - p_D(t,\hat{x},y) \, dy.
\end{eqnarray*}
\end{proof}

Let $D \subset \R^d$ be an open set which is symmetric with respect to $H_0$. If $d = 1 \le \alpha$ we assume additionally that $D$ is bounded. We define the Green function for $D_+$ for the process $\tilde{X}_t$ by
$$
\tilde{G}_{D_+}(x,y) = \int_{0}^{\infty} \tilde{p}_{D_+}(t,x,y) \, dt, \quad \quad \quad x,y \in D_+, 
$$
$\tilde{G}_{D_+}(x,y) = 0$ if $x \in (D_+)^c$ or $y \in (D_+)^c$. For an open bounded set $D \subset \R^d$ which is symmetric with respect to $H_0$ we define the corresponding Green operator for $D_+$ by
$$
\tilde{G}_{D_+} f(x) = \int_{D_+} \tilde{G}_{D_+}(x,y) f(y) \, dy.
$$ 
We assume here that $f$ is a bounded Borel function $f: D_+ \to \R$. Clearly we have 
$$
\tilde{G}_{D_+} f(x) = E^x \int_0^{\tilde{\tau}_{D_+}} f(\tilde{X}_s) \, ds.
$$
By Lemma \ref{killed2} we obtain the following corollary.
\begin{corollary}
\label{corollaryGreenformula}
Let $D \subset \R^d$ be an open set which is symmetric with respect to $H_0$. If $d = 1 \le \alpha$ we assume additionally that $D$ is bounded. Then we have
\begin{equation}
\label{Greenformula}
\tilde{G}_{D_+}(x,y) = G_D(x,y) - G_D(\hat{x},y), \quad \quad  \quad x,y \in D_+.
\end{equation}
\end{corollary}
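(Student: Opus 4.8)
The identity (\ref{Greenformula}) is obtained from Lemma~\ref{killed2} simply by integrating in $t$ over $(0,\infty)$; the only point requiring a little care is the legitimacy of splitting the resulting integral into a difference of two integrals. First I would fix $x,y \in D_+$ and observe that since $x_i > 0$ we have $\hat{x}_i = -x_i < 0$, so $\hat{x} \in D_-$ and in particular $\hat{x} \neq y$. Consequently $G_D(\hat{x},y) < \infty$: in the transient case $d > \alpha$ this is because $p_D(t,\hat{x},y) \le p(t,\hat{x},y)$ and $\int_0^{\infty} p(t,\hat{x},y)\,dt = K_{\alpha}(y - \hat{x}) < \infty$ by (\ref{Riesz}); in the case $d = 1 \le \alpha$, where $D$ is bounded by assumption, it is because $p_D(t,\hat{x},y) \le p(t,\hat{x},y)$ is integrable near $t = 0$ (as $y \neq \hat{x}$) while $p_D(t,\hat{x},y)$ decays exponentially as $t \to \infty$ due to boundedness of $D$.

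Next I would invoke Lemma~\ref{killed2} itself: the left-hand side $\tilde{p}_{D_+}(t,x,y)$ is a transition density, hence nonnegative, so the lemma gives $0 \le p_D(t,\hat{x},y) \le p_D(t,x,y)$ for every $t > 0$. Writing $f(t) = p_D(t,x,y) \ge 0$ and $g(t) = p_D(t,\hat{x},y) \ge 0$, we have $f - g \ge 0$ and $\int_0^{\infty} g(t)\,dt < \infty$ by the previous paragraph, so the elementary identity $\int_0^{\infty} (f(t) - g(t))\,dt = \int_0^{\infty} f(t)\,dt - \int_0^{\infty} g(t)\,dt$ holds (and remains valid, with both sides equal to $+\infty$, in case $\int_0^{\infty} f(t)\,dt = \infty$). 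Unravelling the definitions of $\tilde{G}_{D_+}(x,y) = \int_0^{\infty} \tilde{p}_{D_+}(t,x,y)\,dt$ and of $G_D$, this is precisely $\tilde{G}_{D_+}(x,y) = G_D(x,y) - G_D(\hat{x},y)$, i.e.\ (\ref{Greenformula}). For $x \in (D_+)^c$ or $y \in (D_+)^c$ both sides are $0$ by definition, so there is nothing more to prove.

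In short, there is no real obstacle here: the corollary is a one-line integration of Lemma~\ref{killed2}, and the only thing to be verified is that $G_D(\hat{x},y)$ is finite --- which holds because $\hat{x}$ lies in $D_-$ and hence is distinct from $y \in D_+$, together with the standing boundedness hypothesis in the recurrent case $d = 1 \le \alpha$. Nonnegativity of $\tilde{p}_{D_+}$, already built into Lemma~\ref{killed2}, then justifies passing the integral through the subtraction.
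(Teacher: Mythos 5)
Your proof is correct and takes essentially the same route as the paper, which derives the corollary by simply integrating the identity of Lemma~\ref{killed2} in $t$ over $(0,\infty)$ without further comment. The extra care you take in justifying the splitting of the integral (finiteness of $G_D(\hat{x},y)$ since $\hat{x}\in D_-$ is distinct from $y\in D_+$, plus nonnegativity of $\tilde{p}_{D_+}$) is a sound, if routine, addition that the paper leaves implicit.
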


\begin{lemma}
\label{coupling}
Let $B = B(0,r)$, $r > 0$. Assume that $f: B \to \R$ is Borel and bounded. Then we have
$$
G_B f(x) - G_B f(\hat{x}) = 
\int_{B_+} \tilde{G}_{B_+}(x,y) (f(y) - f(\hat{y})) \, dy.
$$
\end{lemma}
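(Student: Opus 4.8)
The plan is to reduce the identity to Corollary~\ref{corollaryGreenformula} by a change of variables that exploits the reflection symmetry of $G_B$; throughout I take $x \in B_+$, which is the situation in which the lemma will be used (for $x \in B_-$ both sides are governed by the obvious antisymmetry in $x \leftrightarrow \hat{x}$).

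First I would record the reflection invariance of the Green function. Since $B = B(0,r)$ satisfies $R(B) = B$ and the symmetric $\alpha$-stable process is rotation invariant, $p(t,Ru,Rv) = p(t,u,v)$, hence $p_B(t,Ru,Rv) = p_B(t,u,v)$ for $u,v \in B$, $t > 0$; integrating in $t$ (legitimate because $B$ is bounded) gives
$$
G_B(\hat{u},\hat{v}) = G_B(u,v), \qquad \text{equivalently} \qquad G_B(x,\hat{y}) = G_B(\hat{x},y), \quad u,v,x,y \in B .
$$
I would also note that, $f$ and $B$ both being bounded, $G_B(x,\cdot)f(\cdot) \in L^1(B)$, so all the integral manipulations below are justified.

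Next I would write
$$
G_B f(x) - G_B f(\hat{x}) = \int_B \bigl( G_B(x,y) - G_B(\hat{x},y) \bigr) f(y) \, dy .
$$
The set $B \cap H_0$ has Lebesgue measure zero, so the integral over $B$ splits as the sum of the integrals over $B_+$ and $B_-$. In the integral over $B_-$ I substitute $y = \hat{z}$ with $z \in B_+$; the reflection $R$ is a Lebesgue-measure-preserving bijection of $B_-$ onto $B_+$, and using $G_B(x,\hat{z}) = G_B(\hat{x},z)$ together with $G_B(\hat{x},\hat{z}) = G_B(x,z)$ that integral becomes
$$
-\int_{B_+} \bigl( G_B(x,z) - G_B(\hat{x},z) \bigr) f(\hat{z}) \, dz .
$$
Adding the two pieces gives
$$
G_B f(x) - G_B f(\hat{x}) = \int_{B_+} \bigl( G_B(x,y) - G_B(\hat{x},y) \bigr) \bigl( f(y) - f(\hat{y}) \bigr) \, dy ,
$$
and since $G_B(x,y) - G_B(\hat{x},y) = \tilde{G}_{B_+}(x,y)$ for $x,y \in B_+$ by Corollary~\ref{corollaryGreenformula}, the claim follows. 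The only steps requiring genuine care are the reflection invariance of $G_B$ and the measure-preserving change of variables on $B_-$; the remainder is bookkeeping.
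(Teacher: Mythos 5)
Your argument is correct and is essentially the paper's own proof: both rest on the reflection identities $G_B(\hat{x},\hat{y}) = G_B(x,y)$ and $G_B(\hat{x},y) = G_B(x,\hat{y})$, a split of the integral over $B$ into $B_+$ and $B_-$ with the change of variables $y \mapsto \hat{y}$ on $B_-$, and an appeal to Corollary~\ref{corollaryGreenformula}. The only differences are cosmetic (you subtract first and then split, and you spell out the justification of the reflection symmetry and the restriction to $x \in B_+$, which the paper leaves implicit).
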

\begin{proof}
Note that $G_B(\hat{x},\hat{y}) = G_B(x,y)$ and $G_B(\hat{x},y) = G_B(x,\hat{y})$ for any $x,y \in B_+$. We have
\begin{eqnarray*}
G_B f(\hat{x}) &=&
\int_{B_+} G_B(\hat{x},y) f(y) \, dy + \int_{B_-} G_B(\hat{x},y) f(y) \, dy \\
&=& \int_{B_+} G_B(\hat{x},y) f(y) \, dy + \int_{B_+} G_B(\hat{x},\hat{y}) f(\hat{y}) \, dy \\
&=& \int_{B_+} G_B(\hat{x},y) f(y) \, dy + \int_{B_+} G_B(x,y) f(\hat{y}) \, dy.
\end{eqnarray*}
Similarly, we have
\begin{eqnarray*}
G_B f(x) &=& \int_{B_+} G_B(x,y) f(y) \, dy + 
\int_{B_+} G_B(x,\hat{y}) f(\hat{y}) \, dy \\
&=& \int_{B_+} G_B(x,y) f(y) \, dy + \int_{B_+} G_B(\hat{x},y) f(\hat{y}) \, dy.
\end{eqnarray*}
Using the above equalities and (\ref{Greenformula}) we obtain the assertion of the lemma.
\end{proof}

\begin{lemma}
\label{Greenmonotonicity}
Let $d > \alpha$ and $V \subset W \subset \R^d$ be open sets symmetric with respect to $H_0$. Then we have
$$
\tilde{G}_{V_+}(x,y) \le \tilde{G}_{W_+}(x,y), \quad \quad \quad x,y \in V_+.
$$
\end{lemma}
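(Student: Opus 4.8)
The plan is to exploit the symmetry: because $V$ and $W$ are symmetric with respect to $H_0$, we have $V_+ = V \cap H$ and $W_+ = W \cap H$, so $V \subset W$ forces $V_+ \subset W_+$, and—crucially—$\tilde G_{V_+}$ and $\tilde G_{W_+}$ arise by killing \emph{one and the same} process, the subordinated killed Brownian motion $\tilde X_t = (B^H)_{\eta_t}$ started at $x \in V_+ \subset H$, upon exiting the two nested open sets $V_+ \subseteq W_+$. This reduces the statement to domain monotonicity of killed transition densities, followed by a continuity argument (through Lemma~\ref{killed2}) to turn an almost-everywhere inequality into a genuinely pointwise one. The hypothesis $d > \alpha$ is used only to guarantee that $\tilde G_{V_+}$ and $\tilde G_{W_+}$ are well defined for the (possibly unbounded) sets at hand.

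First I would record the pathwise inequality $\tilde\tau_{V_+} \le \tilde\tau_{W_+}$: whenever $\tilde X_t \notin W_+$ we also have $\tilde X_t \notin V_+$, so the first exit from $W_+$ is not earlier than that from $V_+$; hence $\{\tilde\tau_{V_+} > t\} \subseteq \{\tilde\tau_{W_+} > t\}$ for every $t > 0$. Consequently, for every Borel $A \subseteq V_+$ and every $x \in V_+$,
$$\int_A \tilde p_{V_+}(t,x,y)\,dy = \tilde P^x\big(\tilde X_t \in A,\ \tilde\tau_{V_+} > t\big) \le \tilde P^x\big(\tilde X_t \in A,\ \tilde\tau_{W_+} > t\big) = \int_A \tilde p_{W_+}(t,x,y)\,dy,$$
so that $\tilde p_{V_+}(t,x,y) \le \tilde p_{W_+}(t,x,y)$ for Lebesgue-a.e.\ $y \in V_+$, for each fixed $t > 0$ and $x \in V_+$.

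Next I would upgrade this to a pointwise bound. By Lemma~\ref{killed2}, $\tilde p_{V_+}(t,x,y) = p_V(t,x,y) - p_V(t,\hat x,y)$ and $\tilde p_{W_+}(t,x,y) = p_W(t,x,y) - p_W(t,\hat x,y)$; since $p_D(t,\cdot,\cdot)$ is continuous on $D \times D$, since $x \mapsto \hat x$ is continuous and $\hat x \in V_- \subset V$ (hence also $\hat x \in W_- \subset W$) for $x \in V_+$, and since $y \in V_+ \subset V \subset W$, both $\tilde p_{V_+}(t,\cdot,\cdot)$ and $\tilde p_{W_+}(t,\cdot,\cdot)$ are continuous on $V_+ \times V_+$. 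Therefore the a.e.-in-$y$ inequality holds for \emph{all} $y \in V_+$, for every $x \in V_+$ and $t > 0$. Integrating in $t$ over $(0,\infty)$ and using monotone convergence then gives $\tilde G_{V_+}(x,y) \le \tilde G_{W_+}(x,y)$ for all $x,y \in V_+$; on the diagonal both sides are $+\infty$, so the statement is nontrivial only for $x \ne y$.

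I do not expect a serious obstacle here; the only point requiring care is the bookkeeping that $\tilde X$ is literally the same Hunt process in both settings and that "killing on exiting $D_+$" is legitimate, so that the elementary domain-monotonicity estimate applies verbatim. An equivalent, slightly cleaner, route avoids transition densities: from $\tilde G_{D_+}f(x) = E^x \int_0^{\tilde\tau_{D_+}} f(\tilde X_s)\,ds$, for nonnegative bounded Borel $f$ supported in $V_+$ the pathwise bound $\tilde\tau_{V_+} \le \tilde\tau_{W_+}$ yields $\int_0^{\tilde\tau_{V_+}} f(\tilde X_s)\,ds \le \int_0^{\tilde\tau_{W_+}} f(\tilde X_s)\,ds$ and hence $\tilde G_{V_+}f \le \tilde G_{W_+}f$; letting $f$ range over such functions recovers the pointwise inequality after the same continuity upgrade.
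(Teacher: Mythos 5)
Your proposal is correct and follows essentially the same route as the paper: the pathwise inequality $\tilde\tau_{V_+}\le\tilde\tau_{W_+}$ gives the a.e.\ comparison, which is upgraded to a pointwise one via Lemma~\ref{killed2}/Corollary~\ref{corollaryGreenformula} and the continuity of the killed kernels; the paper simply works directly with $\int_A\tilde G_{D_+}(x,y)\,dy=E^x\int_0^{\tilde\tau_{D_+}}1_A(\tilde X_s)\,ds$ and the continuity of $G_D(x,\cdot)$, which is exactly the alternative you sketch in your final paragraph.
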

\begin{proof}
Let $A \subset V_+$ be a Borel bounded set. For any $x \in V_+$ we have
$$
\int_A \tilde{G}_{V_+}(x,y) \, dy 
= E^x \int_0^{\tilde{\tau}_{V_+}} 1_A(\tilde{X}_s) \, ds \le
E^x \int_0^{\tilde{\tau}_{W_+}} 1_A(\tilde{X}_s) \, ds
= \int_A \tilde{G}_{W_+}(x,y) \, dy.
$$
Now the lemma follows from (\ref{Greenformula}) and continuity of $G_D(x,\cdot)$ on $D \setminus \{x\}$ (for $D = V, W$).
\end{proof}

\begin{lemma}
\label{Greenupper1}
Let $d > \alpha \in(0,1]$. Fix $r > 0$ and put $B = B(0,r)$, $B_+ =B_+(0,r)$.  Then we have
$$
0 \le G_B(x,y) - G_B(\hat{x},y) \le c \frac{|x - \hat{x}|}{|x - y|^{d - \alpha} |\hat{x} - y|}, \quad \quad x,y \in B_+,
$$
where $c = c(d,\alpha)$.
\end{lemma}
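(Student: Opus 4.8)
The plan is to recognize the left-hand side as the Green function $\tilde G_{B_+}$ of the subordinated killed Brownian motion, to dominate it from above by the corresponding object on the half-space $H$ where an explicit formula is available, and then to finish with an elementary one-variable inequality.

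First, by Corollary~\ref{corollaryGreenformula} applied to $D=B$ we have $G_B(x,y)-G_B(\hat x,y)=\tilde G_{B_+}(x,y)$ for $x,y\in B_+$. Since $\tilde G_{B_+}(x,y)=\int_0^\infty \tilde p_{B_+}(t,x,y)\,dt$ and $\tilde p_{B_+}$ is the transition density of the process $\tilde X_t$ killed on exiting $B_+$, it is nonnegative, which yields the lower bound $0\le G_B(x,y)-G_B(\hat x,y)$ at once. For the upper bound, observe that $B$ and $\R^d$ are open and symmetric with respect to $H_0$ with $B\subset\R^d$, so Lemma~\ref{Greenmonotonicity} (applicable as $d>\alpha$) gives $\tilde G_{B_+}(x,y)\le\tilde G_{(\R^d)_+}(x,y)$. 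Because $(\R^d)_+=H$ and $d>\alpha$, Corollary~\ref{corollaryGreenformula} with $D=\R^d$ together with (\ref{Riesz}) gives
\[
\tilde G_{(\R^d)_+}(x,y)=G_{\R^d}(x,y)-G_{\R^d}(\hat x,y)
=\calA(d,\alpha)\Bigl(\frac{1}{|x-y|^{d-\alpha}}-\frac{1}{|\hat x-y|^{d-\alpha}}\Bigr).
\]

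It then remains to estimate this explicit difference. Put $\beta=d-\alpha>0$, $a=|x-y|$, $b=|\hat x-y|$ and $t=a/b$. From $x_i,y_i>0$ one reads off $|x-\hat x|=2x_i$, $b^2-a^2=4x_iy_i\ge 0$ (hence $t\in(0,1]$) and $b\ge y_i$, so $1-t^2=4x_iy_i/b^2\le 4x_i/b=2|x-\hat x|/b$. Writing the difference above as $\calA(d,\alpha)\,a^{-\beta}(1-t^\beta)$, it suffices to show $1-t^\beta\le \max(1,\beta)(1-t^2)$ for $t\in(0,1]$: when $\beta\le 2$ this holds because $t^\beta\ge t^2$, and when $\beta>2$ one uses $1-t^\beta=\beta\int_t^1 s^{\beta-1}\,ds\le\beta\int_t^1 s\,ds=\tfrac{\beta}{2}(1-t^2)$. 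Combining, $G_B(x,y)-G_B(\hat x,y)\le\tilde G_{(\R^d)_+}(x,y)\le 2\,\calA(d,\alpha)\max(1,\beta)\,|x-\hat x|\,a^{-\beta}b^{-1}$, which is the asserted bound with $c=c(d,\alpha)$.

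I do not foresee a serious obstacle: the two genuinely needed ingredients, the Green-function identity on a symmetric domain and the monotonicity lemma, are already established, and the rest is bookkeeping. The only points deserving a line of care are that Corollary~\ref{corollaryGreenformula} may be applied on the unbounded set $\R^d$ precisely because we are in the transient regime $d>\alpha$, and that the elementary inequality $1-t^\beta\le\max(1,\beta)(1-t^2)$ needs the split into $\beta\le 2$ and $\beta>2$ (the factor $\max(1,\beta)$, and hence $c$, genuinely depends on $d$ and $\alpha$).
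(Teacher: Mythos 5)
Your proof is correct and follows essentially the same route as the paper: identify the difference with $\tilde G_{B_+}(x,y)$ via Corollary \ref{corollaryGreenformula}, dominate it by $\tilde G_{(\R^d)_+}(x,y)$ using Lemma \ref{Greenmonotonicity}, and evaluate the half-space difference explicitly from (\ref{Riesz}). The only (immaterial) difference is the final elementary step: the paper bounds $|\hat x-y|^{d-\alpha}-|x-y|^{d-\alpha}$ via $p^{\beta}-q^{\beta}\le(2\vee 2\beta)p^{\beta-1}(p-q)$ and the triangle inequality, whereas you use the identity $|\hat x-y|^2-|x-y|^2=4x_iy_i$ together with $1-t^{\beta}\le\max(1,\beta)(1-t^2)$; both give the same constant dependence $c=c(d,\alpha)$.
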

\begin{proof}
Using Corollary \ref{corollaryGreenformula}, Lemma \ref{Greenmonotonicity} and (\ref{Riesz}) we obtain
\begin{eqnarray*}
G_B(x,y) - G_B(\hat{x},y) &=& \tilde{G}_{B_+}(x,y) \\
&\le& \tilde{G}_{\R^d_+}(x,y) \\
&=& G_{\R^d}(x,y) - G_{\R^d}(\hat{x},y) \\
&=& \calA(d,\alpha) \frac{|\hat{x} - y|^{d - \alpha} - |x - y|^{d - \alpha}}{|x - y|^{d - \alpha} |\hat{x} - y|^{d - \alpha}}.
\end{eqnarray*}

One can show that for any $p \ge q \ge 0$ and $\beta > 0$
$$
p^{\beta} - q^{\beta} \le (2 \vee 2 \beta) p^{\beta - 1} (p - q)
$$
(we omit an elementary justification of this inequality). Using this one obtains for any $x,y \in B_+$
$$
|\hat{x} - y|^{d - \alpha} - |x - y|^{d - \alpha} \le
(2 \vee (2d - 2 \alpha)) |x - \hat{x}| |\hat{x} - y|^{d - \alpha - 1},
$$ 
which implies the assertion of the lemma.
\end{proof}

Now we prove similar lower bound estimates of $G_B(x,y) - G_B(\hat{x},y)$. These lower bound estimates will be needed in the proof of Proposition \ref{counterexample}. We prove these estimates only for $x \in B_+(0,r/4)$ and $y$ belonging to some truncated cone lying inside $B_+(0,r)$. This will be enough for our purposes. The lower bound estimates are based on the results of R. Song \cite{S2004}.

\begin{lemma} 
\label{lowerboundGreen}
Let $\alpha \in (0,1]$, $d > \alpha$. Fix $r > 0$ and for any $x \in B_+(0,r/4)$ put
$$
K(r,x) = \{y = (y_1,\ldots,y_d) \in \R^d: \, r/2 > y_i > 2 |x|, |y| < \sqrt{2} y_i\}.
$$
For any $x \in B_+(0,r/4)$ and $y \in K(r,x)$ we have
$$
G_{B(0,r)}(x,y) - G_{B(0,r)}(\hat{x},y) \ge 
c \frac{|x - \hat{x}|}{|x - y|^{d - \alpha} |\hat{x} - y|},
$$
where $c = c(d,\alpha)$.
\end{lemma}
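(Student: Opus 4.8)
Since $B_+:=B_+(0,r)=B(0,r)\cap H$ is symmetric with respect to $H_0$, Corollary \ref{corollaryGreenformula} identifies $G_{B(0,r)}(x,y)-G_{B(0,r)}(\hat x,y)$ with $\tilde G_{B_+}(x,y)$, the Green function on $B_+$ of the killed Brownian motion subordinated by the $\alpha/2$-stable subordinator; in particular this quantity is $\ge0$. The plan is to bound $\tilde G_{B_+}(x,y)$ from below by $\tilde G_U(x,y)$ for a conveniently chosen bounded $C^{1,1}$ subdomain $U\subset B_+$ containing $x$ and $y$, and then to apply the sharp two-sided Green function estimates of Song \cite{S2004}, which for a bounded $C^{1,1}$ domain $D$ with $d>\alpha$ take the form
\[
\tilde G_D(x,y)\;\asymp\;\Bigl(1\wedge\frac{\delta_D(x)}{|x-y|}\Bigr)\Bigl(1\wedge\frac{\delta_D(y)}{|x-y|}\Bigr)\frac{1}{|x-y|^{d-\alpha}},
\]
with implied constants depending only on $d,\alpha$ and the $C^{1,1}$ character of $D$.

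First I would record the elementary geometry on the range $x\in B_+(0,r/4)$, $y\in K(r,x)$. From $x_i\le|x|<r/4$ we get $\delta_{B_+}(x)=x_i=\tfrac12|x-\hat x|$, and from $y_i>2|x|$ together with $|y|<\sqrt2\,y_i<r/\sqrt2$ we get $|x|<y_i/2$; using also $|\hat x-y|^2=|x-y|^2+4x_iy_i$ one obtains, with absolute constants,
\[
|x-y|\asymp|\hat x-y|\asymp|y|\asymp y_i,\qquad \delta_{B_+}(y)\asymp y_i,\qquad \delta_{B_+}(x)=x_i\le|x-y|,\qquad |\hat x-y|\ge|x-y|.
\]
Hence the right-hand side of the asserted inequality satisfies $\dfrac{|x-\hat x|}{|x-y|^{d-\alpha}|\hat x-y|}\asymp\dfrac{\delta_{B_+}(x)}{|x-y|^{\,d-\alpha+1}}$, so it suffices to prove $\tilde G_{B_+}(x,y)\ge c\,\delta_{B_+}(x)\,|x-y|^{-(d-\alpha+1)}$ on this range.

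For the lower bound on $\tilde G_{B_+}$ note first that $B_+(0,r/4)$ and every truncated cone $K(r,x)$ are contained in $B(0,4r/5)$, so both $x$ and $y$ lie near the flat part of $\partial B_+$ and at distance $\asymp r$ from the non-smooth edge $\partial B(0,r)\cap H_0$. I would therefore apply Song's estimate on a fixed-shape (i.e.\ $U=rU_1$ for a single reference domain $U_1$) bounded $C^{1,1}$ domain $U\subset B_+$ containing $B_+(0,r/4)$ and all the cones $K(r,x)$, chosen so that $\partial U$ agrees with $H_0$ throughout $B(0,4r/5)$; then $\delta_U(x)=\delta_{B_+}(x)$ for $x\in B_+(0,r/4)$ and $\delta_U(y)\asymp\delta_{B_+}(y)\asymp y_i$ for $y\in K(r,x)$, with constants depending only on $d$, and (exactly as in the proof of Lemma \ref{Greenmonotonicity}, the symmetry there being inessential) $\tilde G_{B_+}(x,y)\ge\tilde G_U(x,y)$. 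Since the $C^{1,1}$ character of $U=rU_1$ is independent of $r$ up to scaling and Song's estimate scales correctly, the constant below depends only on $d,\alpha$. Finally, on our range $\delta_U(x)/|x-y|\le1$, so the first factor in Song's bound for $\tilde G_U$ is $\asymp\delta_{B_+}(x)/|x-y|$, while $\delta_U(y)/|x-y|\asymp y_i/|x-y|\gtrsim1$, so the second factor is bounded below; therefore
\[
G_{B(0,r)}(x,y)-G_{B(0,r)}(\hat x,y)=\tilde G_{B_+}(x,y)\ge\tilde G_U(x,y)\ge c\,\frac{\delta_{B_+}(x)}{|x-y|^{\,d-\alpha+1}}\ge c\,\frac{|x-\hat x|}{|x-y|^{d-\alpha}|\hat x-y|},
\]
using $|x-\hat x|=2\delta_{B_+}(x)$ and $|\hat x-y|\ge|x-y|$, which is the assertion.

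The step needing most care is the honest use of Song's theorem for the (non-smooth) half-ball: one must pass to a genuine bounded $C^{1,1}$ subdomain $U$ as above and carry out the routine but somewhat tedious construction and distance-function/scaling bookkeeping, or instead verify that Song's estimate localizes near the $C^{1,1}$ part of $\partial B_+$. One should also check that \cite{S2004} covers $d\in\{1,2\}$ under the sole hypothesis $d>\alpha$ (the integral defining $\tilde G$ still converges), so that the lemma holds in the stated generality.
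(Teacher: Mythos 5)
Your proposal follows essentially the same route as the paper's proof: reduce via Corollary \ref{corollaryGreenformula} to the Green function of the half-ball for the subordinated killed Brownian motion, pass to an auxiliary $C^{1,1}$ subdomain of fixed shape, invoke Song's lower bound from \cite{S2004}, and finish with the same elementary geometry and scaling. The one point to tighten is that Song's two-sided estimate applies to the Green function $G^Z_U$ of the Brownian motion killed on exiting $U$ and \emph{then} subordinated, not to $\tilde G_U$ (Brownian motion killed in $H$ before subordination and in $U$ only after); these are different processes, so before citing \cite{S2004} you need the one-sided comparison $\tilde G_U \ge G^Z_U$, which is exactly the step the paper makes explicit when it writes $\tilde G_{B_+(0,1)}(x,y) \ge G^Z_{B_+(0,1)}(x,y)$.
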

\begin{proof}
Let $B_t^D$ be the $d$-dimensional Brownian motion killed on exiting a connected bounded open set $D \subset \R^d$, $\eta_t$ - $\alpha/2$-stable subordinator starting from zero independent of $B_t^D$ and put $Z_t^D = (B^D)_{\eta_t}$. $Z_t^D$ is a Markov process with the generator $-(-\Delta |_D)^{\alpha/2}$ where $\Delta |_D$ is the Dirichlet Laplacian in $D$. The process $Z_t^D$ has been intensively studied see e.g. \cite{S2004}, \cite{SV2003}. Let $G_{D}^Z(x,y)$ be the Green function of the set $D$ of the process $Z_t^D$. Note that $\tilde{X}_t = (B^H)_{\eta_t} = Z_t^H$. It follows that 
$$
\tilde{G}_{B_+(0,r)}(x,y) \ge G_{B_+(0,r)}^Z(x,y), \quad \quad x,y \in B_+(0,r).
$$

Let us first consider the case $r = 1$. Let us fix an auxiliary set $U \subset \R^d$ such that $U$ is an open, bounded, connected set with $C^{1,1}$ boundary satisfying $B_+(0,9/10) \subset U \subset B_+(0,1)$. We need to introduce the auxiliary set $U$ because $B_+(0,1)$ is not a $C^{1,1}$ domain for $d \ge 2$ and results from \cite{S2004} which we use are for $C^{1,1}$ domains.

By \cite[Theorem 4.1]{S2004} we have for any $x, y \in U$
\begin{eqnarray}
\label{lowerGreenZ1}
G_{B(0,1)}(x,y) - G_{B(0,1)}(\hat{x},y) &=& 
\tilde{G}_{B_+(0,1)}(x,y) \\
\label{lowerGreenZ2}
&\ge& G_{B_+(0,1)}^Z(x,y) \\
\label{lowerGreenZ2a}
&\ge& G_{U}^Z(x,y) \\
\label{lowerGreenZ3}
&\ge& \left(\frac{\delta_{U}(x) \delta_{U}(y)}{|x - y|^2} \wedge 1\right) \frac{c}{|x - y|^{d - \alpha}},
\end{eqnarray}
where $c = c(d,\alpha)$. Assume that $x \in B_+(0,1/4)$ and $y \in K(1,x)$. We have
$$
1 \ge \frac{|x - \hat{x}|}{2 |\hat{x} - y|},
$$
$$
\delta_{U}(x) = \delta_{B_+(0,1)}(x) = x_i = \frac{1}{2}|x - \hat{x}|,
$$
$$
|\hat{x} - y| \ge |x - y|,
$$
$$
\delta_{U}(y) \ge \frac{y_i}{4} \ge \frac{1}{16} (|x| + |y|) \ge \frac{1}{16} |x - y|.
$$
It follows that 
$$
\frac{\delta_{U}(x) \delta_{U}(y)}{|x - y|^2} \wedge 1 \ge \frac{|x - \hat{x}|}{32 |\hat{x} - y|}.
$$
Using this and (\ref{lowerGreenZ1} - \ref{lowerGreenZ3}) we obtain for $x \in B_+(0,1/4)$, $y \in K(1,x)$
\begin{equation}
\label{lowerr1}
G_{B(0,1)}(x,y) - G_{B(0,1)}(\hat{x},y) \ge 
c \frac{|x - \hat{x}|}{|x - y|^{d - \alpha} |\hat{x} - y|},
\end{equation}
where $c = c(d,\alpha)$.

Now let $r > 0$ be arbitrary. Assume that $x \in B_+(0,r/4)$ and $y \in K(r,x)$. Note that $x/r \in B_+(0,1/4)$ and $y/r \in K(1,x/r)$. By scaling and (\ref{lowerr1}) we get 
\begin{eqnarray*}
G_{B(0,r)}(x,y) - G_{B(0,r)}(\hat{x},y)
&=& r^{\alpha - d} \left(G_{B(0,1)}\left(\frac{x}{r},\frac{y}{r}\right) - G_{B(0,1)}\left(\frac{\hat{x}}{r},\frac{y}{r}\right)\right) \\
&\ge& c r^{\alpha - d} \frac{\left|\frac{x}{r} - \frac{\hat{x}}{r}\right|}{\left|\frac{x}{r} - \frac{y}{r}\right|^{d - \alpha}  \left|\frac{\hat{x}}{r} - \frac{y}{r}\right|} \\
&=& c \frac{|x - \hat{x}|}{|x - y|^{d - \alpha} |\hat{x} - y|}.
\end{eqnarray*}
\end{proof}

To obtain estimates of $\tilde{G}_{B_+}(x,y)$ for $d = \alpha = 1$ we do not use probabilistic methods but we use the explicit formula for the Green function for an interval.

\begin{lemma}
\label{Greenupper2}
Let $d = \alpha = 1$. Fix $r > 0$. For $x \in \R$ let $\hat{x} = -x$. Put $B = (-r,r)$ and $B_+ = (0,r)$. Then for any $x,y \in B_+$ we have
\begin{equation}
\label{11}
0 \le G_B(x,y) - G_B(\hat{x},y) \le \frac{1}{\pi} \min\left(\frac{4 |x|}{|x - y|}, \log\left(\frac{2 |x + y|}{|x - y|}\right)\right).
\end{equation}
For any $x,y \in (0,r/2)$ we have
\begin{equation*}
G_B(x,y) - G_B(\hat{x},y) \ge \frac{1}{\pi} \min\left(\frac{2 |x|}{15 |x - y|}, \log\left(\frac{|x + y|}{4 |x - y|}\right)\right).
\end{equation*}
For any $x \in (0,r/4)$ and $y \in (2 x,r/2)$ we have
\begin{equation*}
G_B(x,y) - G_B(\hat{x},y) \ge \frac{2}{15 \pi} \frac{|x|}{|x - y|}.
\end{equation*}
\end{lemma}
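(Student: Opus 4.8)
The plan is to avoid probability here and argue directly from the classical closed–form expression for the Green function of a bounded interval for the symmetric $1$‑stable (Cauchy) process, after reducing to the unit interval by scaling. Since $\alpha=d=1$, the stable scaling gives $G_{(-r,r)}(x,y)=G_{(-1,1)}(x/r,y/r)$, so the difference $G_B(x,y)-G_B(\hat x,y)$ and each of the three right–hand sides in the statement are invariant under $(x,y,r)\mapsto(x/r,y/r,1)$, while the ranges $(0,r)$, $(0,r/2)$, $(0,r/4)\times(2x,r/2)$ pass to the corresponding ranges for $r=1$; hence it suffices to treat $r=1$. Using the classical formula
\[
G_{(-1,1)}(x,y)=\frac1\pi\log\frac{1-xy+\sqrt{(1-x^2)(1-y^2)}}{|x-y|},
\]
and writing $S:=\sqrt{(1-x^2)(1-y^2)}$, $\hat x=-x$, for $x,y\in(0,1)$ one gets
\[
G_B(x,y)-G_B(\hat x,y)=\frac1\pi\log\rho,\qquad \rho:=\frac{(1-xy+S)(x+y)}{(1+xy+S)\,|x-y|},
\]
and a direct computation yields the identity
\[
\rho-1=\frac{2\min(x,y)\bigl(1-\max(x,y)^2+S\bigr)}{(1+xy+S)\,|x-y|}\ \ge\ 0 ,
\]
so $\rho\ge1$ and $G_B(x,y)-G_B(\hat x,y)\ge0$ (this nonnegativity also follows from Corollary \ref{corollaryGreenformula}).

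For the upper bounds in (\ref{11}): from $xy\ge0$ we have $(1-xy+S)/(1+xy+S)\le1$, hence $\rho\le(x+y)/|x-y|$ and $\log\rho\le\log\bigl(2|x+y|/|x-y|\bigr)$. For the other bound apply $\log\rho\le\rho-1$ together with the crude estimates $1-\max(x,y)^2+S\le1+S\le2$, $1+xy+S\ge1$, $\min(x,y)\le|x|$, which give $\rho-1\le4|x|/|x-y|$. This proves (\ref{11}).

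For the lower bounds: if $x,y\in(0,1/2)$ then $x^2,y^2<1/4$, so $S\in(3/4,1)$ and $xy<1/4$, whence $1-xy+S\ge3/2$, $1+xy+S\le9/4$, so $(1-xy+S)/(1+xy+S)\ge2/3>1/4$ and $\rho\ge(x+y)/(4|x-y|)$; therefore $G_B(x,y)-G_B(\hat x,y)\ge\frac1\pi\log\bigl(|x+y|/(4|x-y|)\bigr)$, and since $\min(a,b)\le b$ this is no smaller than the stated minimum. For the last estimate take $x\in(0,1/4)$, $y\in(2x,1/2)$; then $|x-y|=y-x>x$, $1-y^2+S\ge3/4$, $1+xy+S\le17/8$, and $1-y^2+S<1+xy+S$ (because $-y^2<xy$). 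Writing
\[
t:=\rho-1=\frac{2(1-y^2+S)}{1+xy+S}\cdot\frac{x}{y-x},
\]
we obtain $\frac{12x}{17(y-x)}\le t<2$, and since $\log(1+t)\ge t/2$ for $t\in[0,2]$,
\[
G_B(x,y)-G_B(\hat x,y)=\frac1\pi\log(1+t)\ge\frac{t}{2\pi}\ge\frac{6x}{17\pi(y-x)}\ge\frac{2}{15\pi}\,\frac{|x|}{|x-y|}.
\]

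There is no genuine obstacle here: the whole argument is elementary manipulation of the explicit expression for $\rho$. The only points that require a little care are verifying the identity for $\rho-1$ and, in the final part, pinning down the two–sided bound $t\in(0,2)$ — the upper bound $t<2$ crucially uses the hypothesis $y>2x$ (so that $x/(y-x)<1$) — so that the elementary inequality $\log(1+t)\ge t/2$ on $[0,2]$ applies. All the numerical constants ($4$, $2/3$, $12/17$, $2/15$, \ldots) carry ample slack.
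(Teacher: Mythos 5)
Your proof is correct and follows essentially the same route as the paper: reduce to $r=1$ by scaling and estimate the explicit Blumenthal--Getoor--Ray Green function of the interval by elementary manipulations. The only difference is cosmetic --- you simplify $\sqrt{w}+\sqrt{1+w}$ to $(1-xy+S)/|x-y|$ via the identity $(x-y)^2+(1-x^2)(1-y^2)=(1-xy)^2$, which gives a clean exact formula for $\rho-1$ in place of the paper's direct bounds on $\sqrt{t_2}+\sqrt{1+t_2}-\sqrt{t_1}-\sqrt{1+t_1}$; all your constants and the two-sided control $t\in(0,2)$ (using $y>2x$) check out.
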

\begin{proof}
By scaling we have
$$
G_{(-r,r)}(x,y) = G_{(-1,1)}\left(\frac{x}{r},\frac{y}{r}\right)
$$
so we may assume that $r = 1$. We have \cite{BGR1961}
$$
G_B(x,y) = \frac{1}{\pi} \log\left(\sqrt{w(x,y)} + \sqrt{1 + w(x,y)}\right),
$$
where
$$
w(x,y) = \frac{(1 - |x|^2) (1 - |y|^2)}{|x - y|^2}.
$$

Let $x,y \in B_+ = (0,1)$. Put $t_2 = w(x,y)$, $t_1 = w(\hat{x},y)$. Note that $t_2 > t_1$. It follows that 
\begin{eqnarray}
\nonumber
0 \le G_B(x,y) - G_B(\hat{x},y) 
&=& \frac{1}{\pi} \log\left(\frac{\sqrt{t_2} + \sqrt{1 + t_2}}{\sqrt{t_1} + \sqrt{1 + t_1}}\right) \\
\label{t1t2a}
&=& \frac{1}{\pi} \log\left(1 + \frac{\sqrt{t_2} + \sqrt{1 + t_2} - \sqrt{t_1} - \sqrt{1 + t_1}}{\sqrt{t_1} + \sqrt{1 + t_1}}\right).
\end{eqnarray}
It is elementary to show that $\sqrt{1 + t_2} - \sqrt{1 + t_1} \le \sqrt{t_2} - \sqrt{t_1}$. Hence (\ref{t1t2a}) is bounded from the above by
\begin{eqnarray}
\label{t1t2b}
\frac{1}{\pi}  \frac{\sqrt{t_2} + \sqrt{1 + t_2} - \sqrt{t_1} - \sqrt{1 + t_1}}{\sqrt{t_1} + \sqrt{1 + t_1}} 
&\le& \frac{2}{\pi \sqrt{t_1}} (\sqrt{t_2} - \sqrt{t_1})\\
\label{t1t2c}
&=& \frac{2 |x + y|}{\pi} \left(\frac{1}{|x - y|} - \frac{1}{|x + y|}\right)\\
\label{t1t2d}
&\le&
\frac{4|x|}{\pi |x - y|}.
\end{eqnarray}

Now assume that $x \in (0,1/4)$, $y \in (2x,1/2)$. We will show that $G_B(x,y) - G_B(\hat{x},y) \ge 2 |x|/(15 \pi |x - y|)$. Note that $4 |x|/|x - y| \le 4$. It is elementary to show that for $0 \le z \le 4$ we have $\log(1+z) \ge z/5$. Using this and (\ref{t1t2b} - \ref{t1t2d}) we obtain that (\ref{t1t2a}) is bounded from below by
\begin{equation}
\label{t1t2e}
\frac{1}{5 \pi}  \frac{\sqrt{t_2} + \sqrt{1 + t_2} - \sqrt{t_1} - \sqrt{1 + t_1}}{\sqrt{t_1} + \sqrt{1 + t_1}}.
\end{equation}
Note that $(1 - |\hat{x}|^2) (1 - |y|^2) \ge 1/2 \ge |\hat{x} - y|^2/2$, so $2t_1 \ge 1$, which implies $2\sqrt{t_1} \ge \sqrt{1 + t_1}$. Hence (\ref{t1t2e}) is bounded from below by
$$
\frac{1}{5 \pi} \frac{\sqrt{t_2} - \sqrt{t_1}}{3 \sqrt{t_1}} = 
\frac{|x + y|}{15 \pi} \left(\frac{1}{|x - y|} - \frac{1}{|x + y|}\right)
= \frac{2 |x|}{15 \pi |x - y|}.
$$

Now again let $x, y \in B_+ = (0,1)$. We have
\begin{eqnarray}
\label{t1t2f}
&&G_B(x,y) - G_B(\hat{x},y) 
= \frac{1}{\pi} \log\left(\frac{\sqrt{t_2} + \sqrt{1 + t_2}}{\sqrt{t_1} + \sqrt{1 + t_1}}\right) \\
\nonumber
&\le& \frac{1}{\pi} \log\left(\frac{2 \sqrt{1 + t_2}}{\sqrt{1 + t_1}}\right) \\
\label{t1t2h}
&=& \frac{1}{\pi} \log\left(2 \sqrt{\frac{|x+y|^2}{|x-y|^2} \left(\frac{|x-y|^2 + (1 - |x|^2)(1 - |y|^2)}{|x+y|^2 + (1 - |x|^2)(1 - |y|^2)}\right)}\right).
\end{eqnarray}
One can easily show that $|x+y|^2 + (1 - |x|^2)(1 - |y|^2) \ge 1$ and $|x-y|^2 + (1 - |x|^2)(1 - |y|^2) \le 1$. Hence (\ref{t1t2h}) is bounded from above by
$$
\frac{1}{\pi} \log\left(\frac{2 |x + y|}{|x - y|}\right).
$$ 

Now let $x, y \in (0,1/2)$. By (\ref{t1t2f}) we obtain
\begin{eqnarray}
\nonumber
&&G_B(x,y) - G_B(\hat{x},y) 
\ge  \frac{1}{\pi} \log\left(\frac{\sqrt{1 + t_2}}{2 \sqrt{1 + t_1}}\right)\\
\label{t1t2g}
&=& \frac{1}{\pi} \log\left(\frac{1}{2} \sqrt{\frac{|x+y|^2}{|x-y|^2} \left(\frac{|x-y|^2 + (1 - |x|^2)(1 - |y|^2)}{|x+y|^2 + (1 - |x|^2)(1 - |y|^2)}\right)}\right).
\end{eqnarray}
One can easily show that $|x+y|^2 + (1 - |x|^2)(1 - |y|^2) \le 2$ and $|x-y|^2 + (1 - |x|^2)(1 - |y|^2) \ge 1/2$. Hence (\ref{t1t2g}) is bounded from below by
$$
\frac{1}{\pi} \log\left(\frac{|x + y|}{4|x - y|}\right).
$$
\end{proof}

The estimates of the Green function obtained in this section are crucial in proving the main result of this paper. To get these estimates in the transient case we used probabilistic methods. There is alternative way of obtaining these estimates. Namely, one can use explicit formulas for the Green function of a ball for symmetric $\alpha$-stable processes (in fact this formula was used in the case $d = \alpha = 1$). We decided to use probabilistic methods instead of explicit formulas for two reasons. First, the probabilistic methods are much simpler. Secondly, it seems that it can be generalized to some other processes, which are subordinated Brownian motions. Especially interesting in this context is the relativistic process, which generator is $-(\sqrt{-\Delta + m^2} -m)$, see e.g. \cite{R2002}, \cite{KS2006}, \cite{CKS2012}.  This operator is called relativistic Hamiltonian and is used in some models of relativistic quantum mechanics see e.g. \cite{LS2010}. For the relativistic process the explicit formula for the Green function of a ball is not known, but it seems that the probabilistic methods from this paper could be used to study Schr{\"o}dinger equations based on the relativistic Hamiltonian $-(\sqrt{-\Delta + m^2} -m)$.

\section{Proof of the main result}
We will need the following technical lemma.
\begin{lemma}
\label{betau}
Fix $r \in (0,1]$, $i \in \{1,\ldots,d\}$ and $z = (z_1,\ldots,z_d) \in \R^d$.
For any $x = (x_1,\ldots,x_d) \in \R^d$ denote $\hat{x} = x - 2 e_i(x_i - z_i)$. Put $B = B(z,r)$. Assume that the function $f: B \to \R$ is Borel and bounded on $B$ and satisfies 
\begin{equation}
\label{halfbeta}
|f(x) - f(\hat{x})| \le A |x - z|^{\beta}, \quad \quad \quad x \in B(z,r/2),
\end{equation}
for some constants $A \ge 1$ and $\beta \ge 0$.

If $\alpha \in (0,1)$ and $\beta \in [0,1 - \alpha)$ then there exists $c = c(d,\alpha,\beta)$ such that for any $x \in B$ we have
\begin{equation}
\label{<1-alpha}
|G_Bf(x) - G_Bf(\hat{x})| \le
c A |x - z|^{\beta + \alpha} + c \frac{\sup_{y \in B} |f(y)|}{r} |x - z|^{\beta + \alpha}.
\end{equation}
If $\alpha \in (0,1]$ and $\beta > 1 - \alpha$ then there exists $c = c(d,\alpha,\beta)$ such that for any $x \in B$ we have
\begin{equation}
\label{>1-alpha}
|G_Bf(x) - G_Bf(\hat{x})| \le
c A |x - z| + c \frac{\sup_{y \in B} |f(y)|}{r} |x - z|.
\end{equation}
If $\alpha = 1$ and $\beta = 0$ then there exists $c = c(d)$ such that for any $x \in B$ we have
\begin{equation}
\label{beta0}
|G_Bf(x) - G_Bf(\hat{x})| \le
c A |x - z|^{1/2} + c \frac{\sup_{y \in B} |f(y)|}{r} |x - z|^{1/2}.
\end{equation}
\end{lemma}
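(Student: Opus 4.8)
The plan is to reduce everything to the coupling identity of Lemma \ref{coupling} and then carry out a careful region decomposition of the resulting integral. By a translation we may assume $z=0$, so that $\hat x = x - 2x_i e_i$ and $B = B(0,r)$, the reflection being the one of Section 3 for the hyperplane $H_0 = \{y_i = 0\}$. Both the hypothesis (\ref{halfbeta}) and the three asserted conclusions are invariant under the exchange $x \leftrightarrow \hat x$, and the statement is trivial when $x_i = 0$; hence we may assume $x \in B_+ := B_+(0,r)$. Lemma \ref{coupling} then gives
\begin{equation*}
G_B f(x) - G_B f(\hat x) = \int_{B_+} \tilde G_{B_+}(x,y)\,\bigl(f(y) - f(\hat y)\bigr)\, dy ,
\end{equation*}
so it remains to estimate this integral. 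I would bound the integrand by $A|y|^{\beta}$ on the ``near'' set $B_+ \cap B(0,r/2)$, using (\ref{halfbeta}), and by $2\sup_B|f|$ on the ``far'' set $B_+ \setminus B(0,r/2)$, where in addition $|y| \ge r/2$.

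For the transient case $d>\alpha$, Lemma \ref{Greenupper1} together with the elementary facts $|x-\hat x| = 2x_i$ and $|\hat x - y| \ge \max(|x-y|,x_i)$ (following from $|\hat x - y|^2 = |x-y|^2 + 4x_i y_i$ and from the $i$-th coordinate) gives
\begin{equation*}
\tilde G_{B_+}(x,y) \le c\,\frac{x_i}{|x-y|^{d-\alpha}}\,\min\!\Bigl(\frac{1}{x_i},\frac{1}{|x-y|}\Bigr),\qquad x,y \in B_+ .
\end{equation*}
Inserting this and partitioning $B_+$ according to whether $|x-y|$ is $<x_i$, in $[x_i,|x|)$, or $\ge|x|$, reduces the near part to radial integrals of the type $\int_0^{x_i}\rho^{\alpha-1}\,d\rho$ (convergent since $\alpha>0$, producing $x_i^{\alpha}$), $\int_{x_i}^{|x|}\rho^{\alpha-2}\,d\rho$, and $\int_{|x|}^{2r}\rho^{\alpha-2+\beta}\,d\rho$ (using $B(0,r)\subset B(x,2r)$), with the weight $A|y|^{\beta}\le 2^{\beta}A|x|^{\beta}$ where $|x-y|\le|x|$ and $A|y|^{\beta}\le 2^{\beta}A|x-y|^{\beta}$ where $|x-y|\ge|x|$. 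When $\beta<1-\alpha$ the last integral even converges over all of $\R^d$ and yields the bound $cA|x|^{\beta+\alpha}$ of (\ref{<1-alpha}); when $\beta>1-\alpha$ one instead uses the finite range $\rho\le 2r\le 2$ together with $x_i\le|x|<r\le 1$ to get $cA|x|$, as in (\ref{>1-alpha}). For the far part one has the weight $2\sup_B|f|$ and the extra information $|y|\ge r/2$; treating separately $|x|<r/4$ (so that $|x-y|\ge r/4$ on the far set) and $|x|\ge r/4$ (so that $r\le 4|x|$), the same radial integrals apply, and the hypothesis $r\le 1$ is used to absorb all surviving powers of $r$ and extract the factor $\sup_B|f|/r$.

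When $\alpha=1$ the integral $\int_{x_i}^{|x|}\rho^{\alpha-2}\,d\rho = \log(|x|/x_i)$ is logarithmic; the resulting weight $x_i\log(|x|/x_i)$ is controlled by the elementary inequality $t\log(c/t)\le c'\sqrt{t\,r}$ valid for $0<t\le r\le 1$, which is exactly what turns the $|x|$-type estimate into the $|x|^{1/2}$-type estimate (\ref{beta0}) in the case $\beta=0$. The recurrent case $d=\alpha=1$ is handled the same way with Lemma \ref{Greenupper2} in place of Lemma \ref{Greenupper1}: the term $4|x|/|x-y|$ plays the role of the kernel above when $|x-y|\ge x_i$, while for $|x-y|<x_i$ one uses $\log\bigl(2|x+y|/|x-y|\bigr)\le\log\bigl(c|x|/|x-y|\bigr)$, whose integral over $\{|x-y|<x_i\}$ is again $\lesssim x_i$. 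The main obstacle throughout is the bookkeeping: one must track the powers of $r$ exactly so that the ``far'' contributions produce the factor $\sup_B|f|/r$ rather than, say, $\sup_B|f|/r^{1-\alpha}$, and one must treat with some care the threshold $\beta=1-\alpha$, where the outer radial integral passes from globally convergent to convergent only on a bounded range, as well as the borderline logarithmic integrals at $\alpha=1$.
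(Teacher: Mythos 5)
Your proposal is correct and follows essentially the same route as the paper: reduce to the coupling identity of Lemma \ref{coupling}, bound $\tilde G_{B_+}(x,y)$ via Lemma \ref{Greenupper1} (resp. Lemma \ref{Greenupper2} when $d=\alpha=1$), split into a near region where (\ref{halfbeta}) applies and a far region where $|y|\ge r/2$, and evaluate the resulting radial integrals, with the borderline logarithms at $\alpha=1$, $\beta=0$ absorbed into the exponent $1/2$. Your three-annulus decomposition $|x-y|<x_i$, $[x_i,|x|)$, $\ge|x|$ is just a slight refinement of the paper's two sets $U_1=B(x,|x|)\cap\{|y|\le r/2\}$ and $U_2=B^c(x,|x|)\cap\{|y|\le r/2\}$ combined with the two lower bounds $|\hat x-y|\ge x_i$ and $|\hat x-y|\ge|x-y|$, so the arguments are equivalent.
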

\begin{proof}
Put $B_+ = \{y = (y_1,\ldots,y_d) \in B: \, y_i > 0\}$. We may assume that $z = 0$ and $x = (x_1,\ldots,x_d) \in B_+$.  By Lemma \ref{coupling} we have
$$
|G_B f(x) - G_B f(\hat{x})| \le 
\int_{B_+} \tilde{G}_{B_+}(x,y) |f(y) - f(\hat{y})| \, dy,
$$ 
where $\tilde{G}_{B_+}(x,y) = G_B(x,y) - G_B(\hat{x},y)$. We will consider two cases: case 1: $d > \alpha \in (0,1]$, case 2: $d = \alpha = 1$. We will often use the fact that $r \in (0,1]$ and $|x| < r \le 1$.

{\bf{Case 1:}} $d > \alpha \in (0,1]$. 

Note that
\begin{equation}
\label{xtildex}
|x - \hat{x}| \le 2 |x|.
\end{equation}
For any $y \in B_+$ we have 
\begin{equation}
\label{xtildey}
|\hat{x} - y| \ge |x - y|, \quad \quad \quad |\hat{x} - y| \ge |x - \hat{x}|/2.
\end{equation}
Put $U_1 = B(x,|x|) \cap \{y \in B_+: \, |y| \le r/2\}$, $U_2 = B^c(x,|x|) \cap \{y \in B_+: \, |y| \le r/2\}$, $U_3 = \{y \in B_+: \, |y| \ge r/2\}$. By Lemma \ref{Greenupper1}, (\ref{halfbeta}), (\ref{xtildex}), (\ref{xtildey}) we obtain
\begin{eqnarray*}
&& \int_{B_+} \tilde{G}_{B_+}(x,y) |f(y) - f(\hat{y})| \, dy
\le c |x - \hat{x}| \int_{B_+}  \frac{|f(y) - f(\hat{y})|}{|x - y|^{d - \alpha} |\hat{x} - y|} \, dy \\
&\le& c A \int_{U_1} \frac{|y|^{\beta} \, dy}{|x - y|^{d - \alpha}} + 
c A |x| \int_{U_2} \frac{|y|^{\beta} \, dy}{|x - y|^{d - \alpha + 1}} +
c \sup_{y \in B} |f(y)|  \int_{U_3} \frac{|x - \hat{x}| \, dy}{|x - y|^{d - \alpha} |\hat{x} - y|} \\
&=&  \text{I} + \text{II} + \text{III},
\end{eqnarray*}
where $c = c(d,\alpha)$.

For $y \in U_1$ we have $|y| \le |y - x| + |x| \le 2 |x|$. Hence
\begin{equation}
\label{estI}
\text{I} \le c A |x|^{\beta} \int_{U_1} \frac{dy}{|x - y|^{d - \alpha}} = c A |x|^{\alpha + \beta},
\end{equation}
where $c = c(d,\alpha,\beta)$. When  $|x| > r/4$ we get by (\ref{xtildey}) 
$$
\text{III} \le c \sup_{y \in B} |f(y)| \int_{B(x,2r)} \frac{dy}{|x - y|^{d - \alpha}} = c \sup_{y \in B} |f(y)| r^{\alpha} \le  c \sup_{y \in B} |f(y)| |x| r^{\alpha - 1},
$$
where $c = c(d,\alpha)$. If $|x| < r/4$ then $U_3 \subset B^c(x,r/4) \cap B(x,2r)$ and by (\ref{xtildex}), (\ref{xtildey}) we get
$$
\text{III} \le c \sup_{y \in B} |f(y)| |x| \int_{B^c(x,r/4) \cap B(x,2r)} \frac{dy}{|x - y|^{d - \alpha + 1}}  \le  c \sup_{y \in B} |f(y)| |x| r^{\alpha - 1},
$$
where $c = c(d,\alpha)$. Recall that $r \le 1$. It follows that for $x \in B_+$ we have
\begin{equation}
\label{estIII}
\text{III} \le  \frac{c}{r} \sup_{y \in B} |f(y)| |x|,
\end{equation}
where $c = c(d,\alpha)$.

For $y \in U_2$ we have $|y| \le |y - x| + |x| \le 2 |y - x|$. Hence
$$
\text{II} \le c A |x| \int_{U_2} \frac{dy}{|x - y|^{d - \alpha - \beta +1}},
$$
where $c = c(d,\alpha,\beta)$.
If $\alpha \in (0,1)$, $\beta \in [0,1 - \alpha)$ then $\text{II} \le c A |x|^{\alpha + \beta}$, where $c = c(d,\alpha,\beta)$. This, (\ref{estI}), (\ref{estIII}) imply (\ref{<1-alpha}).
If $\alpha \in (0,1]$, $\beta > 1 - \alpha$ then $\text{II} \le c A |x| r^{\alpha + \beta - 1} \le c A |x|$, where $c = c(d,\alpha,\beta)$. This, (\ref{estI}), (\ref{estIII}) imply (\ref{>1-alpha}). If $\alpha = 1$, $\beta = 0$ we have
$$
\text{II} = c A |x| \int_{U_2} \frac{dy}{|x - y|^{d}} \le
c A |x| \int_{|x|}^{2r} \rho^{-1} \, d\rho \le
c A |x| (|\log(2r)|+|\log|x||) \le
c A |x|^{1/2},
$$
where $c = c(d)$. This, (\ref{estI}), (\ref{estIII}) imply (\ref{beta0}).

{\bf{Case 2:}} $d = \alpha = 1$.

{\bf{Subcase 2a:}} $x \in (0,r/4)$. We have
\begin{eqnarray*}
&& \int_{B_+} \tilde{G}_{B_+}(x,y) |f(y) -f(\hat{y})| \, dy \\
&\le& A (2x)^{\beta} \int_0^{2x} \tilde{G}_{B_+}(x,y) \, dy +
A \int_{2x}^{r/2} y^{\beta} \tilde{G}_{B_+}(x,y) \, dy +
2(\sup_{y \in B} |f(y)|) \int_{r/2}^{r} \tilde{G}_{B_+}(x,y) \, dy \\
&=& \text{I} + \text{II} + \text{III}.
\end{eqnarray*}

By (\ref{11}) we have 
$$
\text{I} \le c A x^{\beta} \int_0^{2x} \log\left(\frac{6 x}{|x - y|}\right) \, dy \le c A x^{\beta} (x + x |\log x|),
$$
where $c = c(\beta)$. By (\ref{11}) we also have
$$
\text{II} \le c A x \int_{2x}^{r/2} \frac{y^{\beta} \, dy}{|x - y|} \le 
c A x \int_{2x}^{r/2} y^{\beta - 1} \, dy,
$$
where $c = c(\beta)$. Since $x \in (0,r/4)$ by (\ref{11}) we also get
$$
\text{III} \le c (\sup_{y \in B} |f(y)|) x \int_{r/2}^r \frac{dy}{|x - y|} \le
c (\sup_{y \in B} |f(y)|) x,
$$
where $c$ is an absolute constant.

Now, if $\beta > 0$ then
$$
\text{I} + \text{II} + \text{III} \le c A x + c \frac{x}{r} \sup_{y \in B} |f(y)|,
$$
for some $c = c(\beta)$. If $\beta = 0$ then
$$
\text{I} + \text{II} + \text{III} \le c A x^{1/2} + c \frac{x^{1/2}}{r} \sup_{y \in B} |f(y)|,
$$
where $c$ is an absolute constant.

{\bf{Subcase 2b:}} $x \in (r/4,r)$. We have 
\begin{eqnarray*}
&& \int_{B_+} \tilde{G}_{B_+}(x,y) |f(y) -f(\hat{y})| \, dy \\
&\le& c A x^{\beta} \int_0^{r/2} \tilde{G}_{B_+}(x,y) \, dy +
c(\sup_{y \in B} |f(y)|) \int_{r/2}^{r} \tilde{G}_{B_+}(x,y) \, dy \\
&=& \text{I} + \text{II},
\end{eqnarray*}
where $c = c(\beta)$.

By (\ref{11}) for any $x,y \in B_+$ we have
\begin{equation}
\label{11b}
\tilde{G}_{B_+}(x,y) \le c - \log |x - y|,
\end{equation}
where $c$ is an absolute constant.

By (\ref{11b}) we obtain
\begin{eqnarray*}
\text{I} &\le& c A x^{\beta} \left(cr - \int_0^{r/2} \log|x - y| \, dy\right) \\
&\le& c A x^{\beta} \left(cr - 2 \int_0^{r/2} \log y \, dy\right) \\
&=& c A x^{\beta} (cr + r - r \log(r/2)) \\
&\le& c A r^{\beta} (r + r |\log r|),
\end{eqnarray*}
where $c = c(\beta)$. Similarly, by (\ref{11b}) we obtain
$$
\text{II} \le c (\sup_{y \in B} |f(y)|) \left(cr - \int_{r/2}^{r} \log|x - y| \, dy\right) \le c (\sup_{y \in B} |f(y)|) (r + r |\log r|),
$$
where $c = c(\beta)$.

If $\beta > 0$ then
$$
\text{I} + \text{II} \le c A r + c \sup_{y \in B} |f(y)| \le
c A x + c \frac{x}{r} \sup_{y \in B} |f(y)|,
$$
for some $c = c(\beta)$. If $\beta = 0$ then
$$
\text{I} + \text{II} \le c A  (r + r |\log r|)+ c \sup_{y \in B} |f(y)| 
\le c A x^{1/2} + c \frac{x^{1/2}}{r} \sup_{y \in B} |f(y)|,
$$
where $c$ is an absolute constant.
\end{proof}

\begin{lemma}
\label{gradientGreen1}
Fix $r \in (0,1]$, $i \in \{1,\ldots,d\}$ and $z = (z_1,\ldots,z_d) \in \R^d$. For any $x = (x_1,\ldots,x_d) \in \R^d$ denote $\hat{x} = x - 2 e_i (x_i - z_i)$. Put $B = B(z,r)$. Assume that a Borel function $f$ satisfies
$$
|f(x) - f(y)| \le A |x - y|^{\eta}, \quad x,y \in B,
$$
for some $A > 0$ and $\eta \in (1 - \alpha,1]$. If $d > \alpha \in (0,1]$ then for any $\eps \in (0,r]$ we have
$$
\int_{B(z,\eps)} \left|\frac{\partial}{\partial z_i}G_B(z,y)\right| |f(y) - f(\hat{y})| \, dy < c A \eps^{\eta + \alpha -1},
$$
for some $c = c(d,\alpha,\eta)$. If $d = \alpha = 1$ then for any $\eps \in (0,r]$ we have
$$
\int_{B(z,\eps)} \left|\frac{\partial}{\partial z_i}G_B(z,y)\right| |f(y) - f(\hat{y})| \, dy < c A \eps^{\eta} ( 1 + |\log \eps |),
$$
for some $c = c(\eta)$. 
\end{lemma}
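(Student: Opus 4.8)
The plan is to bypass differentiation under the integral sign (which fails here, since for $\alpha\le1$ the function $y\mapsto|\nabla_xG_B(x,y)|$ is not integrable) and instead extract from the two Green function estimates of Section~3 a pointwise bound on $\partial_{z_i}G_B(z,y)$, and then integrate. By translation invariance of $G_B$ I may take $z=0$, so that $\hat w=w-2w_ie_i=Rw$ is the reflection in $H_0=\{w_i=0\}$ and $B=B(0,r)$ is symmetric with respect to $H_0$. Since $y$ and $\hat y$ both lie in $B(0,\eps)\subset B$, the H\"older hypothesis gives $|f(y)-f(\hat y)|\le A|y-\hat y|^\eta=A(2|y_i|)^\eta\le 2^\eta A|y|^\eta$, so it will suffice to establish
\[
\Bigl|\frac{\partial}{\partial z_i}G_B(0,y)\Bigr|\le \frac{c}{|y|^{\,d-\alpha+1}}\quad(d>\alpha),\qquad \Bigl|\frac{\partial}{\partial z_i}G_B(0,y)\Bigr|\le \frac{c}{|y|}\quad(d=\alpha=1),
\]
for all $y\in B\setminus\{0\}$, and then integrate over $B(0,\eps)$.

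To get the pointwise bound I would first take $y\in B_+:=B\cap\{w_i>0\}$. For $0<t<r$ both $te_i$ and $y$ lie in $B_+$, so Lemma~\ref{Greenupper1} (when $d>\alpha$) gives
\[
0\le G_B(te_i,y)-G_B(-te_i,y)\le c\,\frac{|te_i-(-te_i)|}{|te_i-y|^{d-\alpha}\,|{-te_i}-y|}=\frac{2ct}{|te_i-y|^{d-\alpha}\,|{-te_i}-y|},
\]
while Lemma~\ref{Greenupper2} (when $d=\alpha=1$, so $\hat x=-x$; use the first term in the minimum there) gives $0\le G_B(t,y)-G_B(-t,y)\le 4t/(\pi|t-y|)$. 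The Green function of a ball is given explicitly (the Blumenthal--Getoor--Ray formula when $d>\alpha$, and the formula used in the proof of Lemma~\ref{Greenupper2} when $d=\alpha=1$), so $G_B(\cdot,y)$ is smooth on $B\setminus\{y\}$ and $\partial_{z_i}G_B(0,y)$ equals $\lim_{t\downarrow0}\bigl(G_B(te_i,y)-G_B(-te_i,y)\bigr)/(2t)$; dividing the displayed bounds by $2t$ and letting $t\downarrow0$ yields the claimed pointwise estimates for $y\in B_+$. For $y$ with $y_i<0$ one has $\hat y\in B_+$ and $|\hat y|=|y|$, and differentiating the symmetry relation $G_B(x,y)=G_B(Rx,Ry)$ in $x_i$ at $x=0$ gives $\partial_{z_i}G_B(0,y)=-\partial_{z_i}G_B(0,\hat y)$, so the bound carries over; for $y_i=0$ one has $\hat y=y$, hence $G_B(te_i,y)=G_B(-te_i,y)$ and $\partial_{z_i}G_B(0,y)=0$. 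Thus the pointwise bound holds for every $y\in B\setminus\{0\}$.

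It then remains to integrate. When $d>\alpha$,
\[
\int_{B(0,\eps)}\Bigl|\frac{\partial}{\partial z_i}G_B(0,y)\Bigr|\,|f(y)-f(\hat y)|\,dy\le cA\int_{B(0,\eps)}|y|^{\eta+\alpha-1-d}\,dy=cA\,\eps^{\eta+\alpha-1},
\]
with $c=c(d,\alpha,\eta)$; the integral converges exactly because $\eta>1-\alpha$ makes the exponent $\eta+\alpha-1-d$ strictly larger than $-d$. When $d=\alpha=1$ the same computation gives the bound $cA\int_{-\eps}^{\eps}|y|^{\eta-1}\,dy=cA\eps^\eta$ with $c=c(\eta)$, which is a fortiori smaller than the asserted $cA\eps^\eta(1+|\log\eps|)$.

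The step that needs genuine care is the passage from the finite-scale estimates of Lemmas~\ref{Greenupper1} and~\ref{Greenupper2} to the pointwise gradient bound: one must know that $\partial_{z_i}G_B(z,y)$ exists and coincides with the limit of the symmetric difference quotients (for which I invoke the explicit Green function of the ball, equivalently the interior regularity of $\alpha/2$-harmonic functions), and one must carry out the translation/reflection bookkeeping so that the hypothesis ``$x,y\in B_+$'' of Lemma~\ref{Greenupper1} is actually satisfied — this is why the upper half-ball is handled first and the case $y_i\le0$ is reduced to it by symmetry. Everything after that is the routine polar-coordinate integral displayed above, in which the assumption $\eta>1-\alpha$ is used precisely once, to guarantee convergence.
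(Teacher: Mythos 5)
Your proof is correct, but it takes a different route from the paper's. The paper gets the pointwise bound on $\partial_{z_i}G_B(z,y)$ by citing \cite[Corollary 3.3]{BKN2002}, which gives $|\nabla_z G_B(z,y)|\le d\,G_B(z,y)/(|z-y|\wedge r)$, and then combining this with the standard upper bounds $G_B(z,y)\le K_\alpha(z-y)$ for $d>\alpha$ (resp.\ the logarithmic bound of \cite[Corollary 3.2]{BB2000} for $d=\alpha=1$); the rest is the same polar-coordinate integration you perform. You instead manufacture the pointwise bound at the center from the paper's own reflection estimates (Lemmas \ref{Greenupper1} and \ref{Greenupper2}) by letting $t\downarrow 0$ in the symmetric difference quotient $(G_B(te_i,y)-G_B(-te_i,y))/(2t)$, handling $y_i<0$ by the reflection symmetry $G_B(x,y)=G_B(Rx,Ry)$. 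This is self-contained within Section 3 (modulo smoothness of $G_B(\cdot,y)$ away from the pole, which you correctly justify via the explicit ball formulas), and in the case $d=\alpha=1$ it actually yields the sharper pointwise bound $c/|y|$ rather than $c(1+|\log|y||)/|y|$, hence the cleaner conclusion $cA\eps^\eta$ without the logarithmic factor; the trade-off is that your bound is only established at the center $z$ of the ball, whereas the cited \cite{BKN2002} estimate holds at every interior point — but the lemma only requires the derivative at $z$, so this costs nothing here. All the delicate steps (applicability of Lemma \ref{Greenupper1} to the pair $(te_i,y)$ with both points in $B_+$, the identification of the derivative with the limit of symmetric difference quotients, and the single use of $\eta>1-\alpha$ for convergence of the integral) are handled correctly.
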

\begin{proof}
By \cite[Corollary 3.3]{BKN2002} we have
$$
\left|\frac{\partial}{\partial z_i}G_B(z,y)\right| \le 
d \frac{G_B(z,y)}{|z - y| \wedge r} 
= d \frac{G_B(z,y)}{|z - y|}, \quad \quad y \in B, \quad y \ne z.
$$
By the assumption on $f$ we have for $y  \in B$
$$
|f(y) - f(\hat{y})| \le A |y - \hat{y}|^{\eta} =
2^{\eta} A |y_i - z_i|^{\eta}.
$$
If $d > \alpha \in (0,1]$  for any $y \in B$ we have
$$
\left|\frac{\partial}{\partial z_i}G_B(z,y)\right| |f(y) - f(\hat{y})| \le
c A \frac{G_B(z,y)}{|z - y|^{1 - \eta}} \le
c A |z - y|^{\alpha + \eta - 1 - d},
$$
for some $c = c(d,\alpha,\eta)$.

If $d = \alpha = 1$ we obtain from \cite[Corollary 3.2]{BB2000} that for any $y \in B$ we have
$$
\left|\frac{\partial}{\partial z_i}G_B(z,y)\right| |f(y) - f(\hat{y})| \le
c A \frac{G_B(z,y)}{|z - y|^{1 - \eta}} \le
c A |z - y|^{\eta - 1} \log(1 + |z - y|^{-1}),
$$
for some $c = c(\eta)$. The above estimates imply the assertion of the lemma.
\end{proof}

\begin{lemma}
\label{gradientGreen} Let $\alpha \in (0,1]$. Fix $r \in (0,1]$, $z = (z_1,\ldots,z_d)\in \R^d$ and $i \in \{1,\ldots,d\}$. Put $B = B(z,r)$. Assume that $f$ is bounded and H{\"o}lder continuous in $B$ with H{\"o}lder exponent $\eta \in (1-\alpha,1]$, that is
$$
|f(x) - f(y)| \le A |x - y|^{\eta}, \quad \quad x,y \in B.
$$
Then $\nabla G_Bf(z)$ exists and we have
\begin{equation}
\label{GBfformula}
\frac{\partial}{\partial z_i} G_B f(z) = \int_{B_+} \frac{\partial}{\partial z_i} G_B(z,y) (f(y) - f(\hat{y})) \, dy,
\end{equation}
where $B_+ = \{(y_1,\ldots,y_d) \in B: \, y_i - z_i > 0\}$,  and $\hat{y} = y - 2(y_i - z_i) e_i$ for $y = (y_1,\ldots,y_d)$.
We also have
\begin{equation}
\label{nablaGBf}
|\nabla G_Bf(z)| \le c A r^{\eta + \alpha -1} (1 + |\log{r}|),
\end{equation}
where $c = c(d,\alpha,\eta)$.
\end{lemma}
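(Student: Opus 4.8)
The plan is to translate so that $z=0$, fix $i\in\{1,\dots,d\}$, write $\hat x=x-2x_ie_i$ and $B_+=\{y\in B:\,y_i>0\}$, and decompose the difference quotient of $G_Bf$ along the $i$-th axis into its parts even and odd in $t$:
$$
G_Bf(te_i)-G_Bf(0)=\tfrac12\bigl(G_Bf(te_i)-G_Bf(-te_i)\bigr)+\tfrac12\bigl(G_Bf(te_i)+G_Bf(-te_i)-2G_Bf(0)\bigr).
$$
I will show the odd part divided by $t$ tends to $L_i:=\int_{B_+}\frac{\partial}{\partial z_i}G_B(0,y)(f(y)-f(\hat y))\,dy$ and the even part is $o(t)$. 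By Lemma~\ref{gradientGreen1} applied with $\eps=r$, the integral $L_i$ converges absolutely with $\sum_i|L_i|\le cAr^{\eta+\alpha-1}(1+|\log r|)$, so that once $(\ref{GBfformula})$ is proved, $(\ref{nablaGBf})$ follows immediately.

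For the odd part I use the coupling identity of Lemma~\ref{coupling}: for $t>0$,
$$
\frac{G_Bf(te_i)-G_Bf(-te_i)}{2t}=\int_{B_+}\frac{\tilde G_{B_+}(te_i,y)}{2t}(f(y)-f(\hat y))\,dy,\qquad\tilde G_{B_+}(x,y)=G_B(x,y)-G_B(\hat x,y),
$$
and, as this quotient is even in $t$, it suffices to let $t\downarrow0$. For $\delta\in(0,r/2)$ and $0<t<\delta/2$ I split $B_+$ into $B_+\setminus B(0,\delta)$ and $B_+\cap B(0,\delta)$. On the former the integrand converges pointwise to $\frac{\partial}{\partial z_i}G_B(0,y)(f(y)-f(\hat y))$ (because $G_B(\cdot,y)$ is smooth near $0$ for $y\neq0$) and, by Lemma~\ref{Greenupper1} (or Lemma~\ref{Greenupper2} when $d=\alpha=1$), is dominated uniformly in $t$ by an $L^1$ function, so dominated convergence applies. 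On the latter, the three-region estimate in the proof of Lemma~\ref{betau}, localized at $x=te_i$ — for which $U_3\cap B(0,\delta)=\emptyset$, $U_1\subset B(0,2t)\subset B(0,\delta)$, and $\int_{U_2\cap B(0,\delta)}\tilde G_{B_+}(te_i,y)|f(y)-f(\hat y)|\,dy\le cAt\,\delta^{\eta+\alpha-1}$ — gives $\frac1{2t}\int_{B_+\cap B(0,\delta)}\tilde G_{B_+}(te_i,y)|f(y)-f(\hat y)|\,dy\le cA\delta^{\eta+\alpha-1}$ uniformly in $t$, and Lemma~\ref{gradientGreen1} bounds the analogous tail of $|\frac{\partial}{\partial z_i}G_B(0,y)|\,|f(y)-f(\hat y)|$ by $cA\delta^{\eta+\alpha-1}$. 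Letting $t\downarrow0$ and then $\delta\downarrow0$ yields $\frac1{2t}(G_Bf(te_i)-G_Bf(-te_i))\to L_i$.

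The heart of the proof is the even part, $S(t):=G_Bf(te_i)+G_Bf(-te_i)-2G_Bf(0)=o(t)$. Write $S(t)=\int_B D_t(y)f(y)\,dy$ with $D_t(y)=G_B(te_i,y)+G_B(-te_i,y)-2G_B(0,y)$. The contribution of the constant is $f(0)\bigl(E^{te_i}\tau_B+E^{-te_i}\tau_B-2E^0\tau_B\bigr)$, which is $f(0)\cdot O(t^2)$ by the explicit formula $E^x\tau_B=c_{d,\alpha}(r^2-|x|^2)^{\alpha/2}$. For $g:=f-f(0)$, with $|g(y)|\le A|y|^\eta$, I split $\int_B D_t(y)g(y)\,dy$ at $|y|=8t$. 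On $B(0,8t)$ the crude bound $|D_t(y)|\le G_B(te_i,y)+G_B(-te_i,y)+2G_B(0,y)$ together with $G_B(x,y)\le\calA(d,\alpha)|x-y|^{\alpha-d}$ (resp.\ the logarithmic bound for $d=\alpha=1$) gives a contribution $O(At^{\eta+\alpha})$ (resp.\ $O(At^{\eta+1}(1+|\log t|))$), hence $o(t)$. On $B\setminus B(0,8t)$ the function $G_B(\cdot,y)$ is regular $\alpha$-harmonic in $B(0,|y|/2)$, so $G_B(x,y)=E^x\!\bigl[G_B(X_{\tau_{B(0,|y|/2)}},y)\bigr]$ for $x\in B(0,|y|/2)$; differentiating in $x$ under the smooth Poisson kernel of the ball and using $G_B(\cdot,y)\ge0$ gives $\bigl|\frac{\partial^2}{\partial x_i^2}G_B(x,y)\bigr|\le c|y|^{-2}G_B(x,y)$ on $B(0,|y|/4)$, whence $D_t(y)=\int_0^t\!\int_{-s}^s\frac{\partial^2}{\partial x_i^2}G_B(\sigma e_i,y)\,d\sigma\,ds$ obeys $|D_t(y)|\le ct^2|y|^{-2}G_B(0,y)$. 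Therefore
$$
\Bigl|\int_{B\setminus B(0,8t)}D_t(y)g(y)\,dy\Bigr|\le cAt^2\int_{B\setminus B(0,8t)}|y|^{\alpha+\eta-d-2}\,dy=cAt^2\int_{8t}^r\rho^{\alpha+\eta-3}\,d\rho\le cAt^{\eta+\alpha},
$$
again $o(t)$ (when $\alpha+\eta=2$, or $d=\alpha=1$, with an extra factor $1+|\log t|$). Adding the even and odd parts gives $\frac{\partial}{\partial z_i}G_Bf(0)=L_i$ for every $i$, which is $(\ref{GBfformula})$, and $(\ref{nablaGBf})$ follows from the first paragraph. I expect the even part to be the main obstacle: it is exactly where the non-integrability of $y\mapsto|\nabla_xG_B(x,y)|$ for $\alpha\in(0,1]$ prevents differentiating $G_Bf$ under the integral sign, so one must exploit the cancellation in $D_t$ and, away from the pole, the interior second-derivative estimate for the nonnegative $\alpha$-harmonic function $G_B(\cdot,y)$; the hypothesis $\eta>1-\alpha$ is used precisely here, to keep $\int_{8t}^r\rho^{\alpha+\eta-3}\,d\rho$ comparable to $t^{\alpha+\eta-2}$ rather than divergent, and — via Lemma~\ref{gradientGreen1} — in the odd part.
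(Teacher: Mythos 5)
Your proof is correct, but it takes a genuinely different and considerably longer route than the paper's. The paper's proof is a direct differentiation under the integral sign: writing $g(y)=f(y)-f(z)$, so that $|g(y)|\le A|y-z|^{\eta}$, it handles the term $f(z)\,G_B1_B$ via Getoor's explicit formula (whose gradient vanishes at the center), disposes of the near-diagonal piece $B(z,2|h|)$ by the crude bound $G_B(x,y)\le K_\alpha(x-y)$, and applies the mean value theorem plus dominated convergence on the rest, using that $|\nabla_z G_B(z,y)|\,|y-z|^{\eta}\le cA|y-z|^{\alpha+\eta-1-d}$ \emph{is} integrable precisely because $\eta>1-\alpha$; the reflected form (\ref{GBfformula}) then falls out of the antisymmetry $\tfrac{\partial}{\partial z_i}G_B(z,\hat y)=-\tfrac{\partial}{\partial z_i}G_B(z,y)$. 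Your closing heuristic — that non-integrability of $y\mapsto|\nabla_xG_B(x,y)|$ prevents differentiating under the integral here — is therefore not accurate for this particular lemma (it is the obstacle in the main theorem, where $qu$ is not a priori H\"older; this lemma is only invoked once H\"older continuity of $qu$ is in hand). Your odd/even decomposition is valid: the odd part via Lemma \ref{coupling} and the $\delta$-splitting is sound, and the even part $S(t)=o(t)$ works as you describe, but it rests on the interior second-derivative estimate $|\partial^2_{x_i}G_B(x,y)|\le c|y|^{-2}G_B(x,y)$ for the nonnegative $\alpha$-harmonic function $G_B(\cdot,y)$, which is not stated anywhere in the paper and which you would need to prove (it does follow by differentiating the Poisson kernel of $B(0,|y|/2)$ twice, in the style of \cite[Lemma 3.1--3.2]{BKN2002}, together with a Harnack comparison of $G_B(\sigma e_i,y)$ with $G_B(0,y)$). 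What your approach buys is that (\ref{GBfformula}) emerges directly in its reflected form and the symmetric structure is exploited throughout; what it costs is the extra second-order machinery that the paper's one-page argument avoids entirely.
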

\begin{proof}
Let $g(y) = f(y) - f(z)$. By our assumption on $f$ we obtain
\begin{equation}
\label{gestimate}
|g(y)| \le A |y - z|^\eta, \quad y \in B(z,r).
\end{equation}
Let $h \in (-r/8,r/8)$. We have
\begin{eqnarray*}
G_B f(z + e_i h) - G_B f(z) &=& 
(G_B 1_B(z + e_i h) - G_B 1_B(z)) f(z) \\
&& + G_B g(z + e_i h) - G_B g(z).
\end{eqnarray*}
By a well known \cite{G1961} explicit formula for $G_B 1_B(x)$ we get
\begin{equation*}
\lim_{h \to 0} \frac{1}{h} (G_B 1_B(z + e_i h) - G_B 1_B(z)) f(z) 
= f(z) \frac{\partial}{\partial z_i} G_B 1_B(z) = 0.
\end{equation*}
We also have
\begin{eqnarray*}
\frac{1}{h} ( G_B g(z + e_i h) - G_B g(z)) 
&=& \frac{1}{h} \int_{B(z,2 |h|)}( G_B(z + e_i h,y) - G_B(z,y)) g(y) \, dy \\
&+& \frac{1}{h} \int_{B(z,r) \setminus B(z,2|h|)}( G_B(z + e_i h,y) - G_B(z,y)) g(y) \, dy \\
&=& \text{I} + \text{II}.
\end{eqnarray*}

We will consider 2 cases: 1: $d > \alpha$, 2: $d = \alpha = 1$.

{\bf{Case 1:}} $d > \alpha$.

By (\ref{gestimate}) and the standard estimate $G_B(x,y) \le K_{\alpha}(x-y)$ we obtain
\begin{eqnarray*}
|\text{I}| &\le& A 2^{\eta} |h|^{\eta - 1} \int_{B(z,2 |h|)} G_B(z + e_i h,y) + G_B(z,y) \, dy \\
&\le& c A |h|^{\eta - 1} \int_{B(z,2 |h|)} |z + e_i h - y|^{\alpha - d} + |z - y|^{\alpha - d} \, dy \\
&\le& c A |h|^{\eta + \alpha - 1},
\end{eqnarray*}
where $c = c(d,\alpha,\eta)$.
By our assumption on $\eta$ it follows that $\lim_{h \to 0} \text{I} = 0$.

We also have
\begin{equation}
\label{IIest}
\text{II} = \int_{B(z,r) \setminus B(z,2|h|)} \frac{\partial G_B}{\partial z_i}(z + e_i h \theta,y) g(y) \, dy,
\end{equation}
where $\theta = \theta(y,z,h,i,\alpha,d,r) \in (0,1)$. Note that for $y \in B(z,r) \setminus B(z,2|h|)$ we have $|y - (z + e_i h \theta)| \ge |y - z|/2$. Using this, (\ref{gestimate}) and \cite[Corollary 3.3]{BKN2002} we obtain for $y \in B(z,r) \setminus B(z,2|h|)$ and $\theta$ as in (\ref{IIest})
$$
\left|\frac{\partial G_B}{\partial z_i}(z + e_i h \theta,y) g(y)\right| \le c A |y - z|^{\alpha + \eta - d - 1},
$$
where $c = c(\alpha,d,\eta)$. Note that by our assumption on $\eta$ the function $y \to |y - z|^{\alpha + \eta - d - 1}$ is integrable on $B = B(z,r)$. By the bounded convergence theorem we get
$$
\lim_{h \to 0} \text{II} = \int_B \frac{\partial}{\partial z_i}G_B(z,y) g(y) \, dy.
$$
It follows that 
\begin{equation}
\label{derg} 
\frac{\partial}{\partial z_i}G_B f(z) = \int_B \frac{\partial}{\partial z_i}G_B(z,y) g(y) \, dy.
\end{equation}
Note that $\frac{\partial}{\partial z_i}G_B(z,\hat{y}) = - \frac{\partial}{\partial z_i}G_B(z,y)$, $y \in B$. This and (\ref{derg}) implies (\ref{GBfformula}).

{\bf{Case 2:}} $d = \alpha = 1$.

Recall that $h \in (-r/8,r/8)$ and $r \in (0,1]$. By \cite[Corollary 3.2]{BB2000} we have
\begin{equation}
\label{logGreen}
G_B(x,y) \le c (1 + |\log|x - y||), \quad x,y \in B, x \ne y,
\end{equation}
where $c$ is an absolute constant.

By (\ref{gestimate}) we obtain
\begin{eqnarray*}
|\text{I}| &\le& A 2^{\eta} |h|^{\eta - 1} \int_{B(z,2 |h|)} G_B(z + h,y) + G_B(z,y) \, dy \\
&\le& c A |h|^{\eta - 1} \int_{B(z,2 |h|)} 1 + |\log|z + h - y|| + |\log|z - y|| \, dy \\
&\le& c A |h|^{\eta} (1 + |\log|h||),
\end{eqnarray*}
where $c = c(\eta)$. By our assumption on $\eta$ it follows that $\lim_{h \to 0} \text{I} = 0$.

We also have
\begin{equation}
\label{IIest2}
\text{II} = \int_{B(z,r) \setminus B(z,2|h|)} \frac{d G_B}{dz}(z + h \theta,y) g(y) \, dy,
\end{equation}
where $\theta = \theta(y,z,h,r) \in (0,1)$. Note that for $y \in B(z,r) \setminus B(z,2|h|)$ we have $|y - (z + h \theta)| \ge |y - z|/2$. Using this, (\ref{gestimate}), (\ref{logGreen}) and \cite[Corollary 3.3]{BKN2002} we obtain for $y \in B(z,r) \setminus B(z,2|h|)$ and $\theta$ as in (\ref{IIest2})
$$
\left|\frac{d G_B}{dz}(z + h \theta,y) g(y)\right| \le c A |y - z|^{\eta - 1} (1 + |\log|y - z||),
$$
where $c$ is an absolute constant. Note that by our assumption on $\eta$ the function $y \to |y - z|^{\eta - 1} (1 + |\log|y - z||)$ is integrable on $B = B(z,r)$. By the bounded convergence theorem we get
$$
\lim_{h \to 0} \text{II} = \int_B \frac{d}{dz}G_B(z,y) g(y) \, dy.
$$
It follows that 
\begin{equation}
\label{derg2} 
\frac{d}{dz}G_B f(z) = \int_B \frac{d}{dz}G_B(z,y) g(y) \, dy.
\end{equation}
Note that $\frac{d}{dz}G_B(z,\hat{y}) = - \frac{d}{dz}G_B(z,y)$, $y \in B$. This and (\ref{derg2}) implies (\ref{GBfformula}).

This finishes the justification of (\ref{GBfformula}) in both cases. Inequality (\ref{nablaGBf}) follows from (\ref{GBfformula}) and Lemma \ref{gradientGreen1}.
\end{proof}

\begin{lemma}
\label{alphaharmonic} Let $\alpha \in (0,2)$ and $D$ be an open set in $\R^d$. For every function $f$ which is $\alpha$-harmonic in $D$ we have
$$
|\nabla f(x)| \le d \frac{\|f\|_{\infty}}{\delta_D(x)}, \quad \quad \quad x \in D.
$$
\end{lemma}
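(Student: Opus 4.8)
The plan is to reduce everything to the explicit Poisson kernel of a ball. We may assume $\|f\|_{\infty} < \infty$, since otherwise the estimate is trivial. Fix $x \in D$ and choose $r \in (0,\delta_D(x))$, so that $\overline{B} \subset D$ for $B := B(x,r)$. Applying the defining relation (\ref{qharm1}) with $q \equiv 0$ to the open set $B$ yields the representation $f(w) = E^w[f(X_{\tau_B})] = \int_{B^c} P_B(w,y)\, f(y)\, dy$ for $w \in B$, where $P_B$ is the Poisson kernel of $B$ for the symmetric $\alpha$-stable process. By the classical Blumenthal--Getoor--Ray formula \cite{BGR1961}, for $w \in B$ and $y \in B^c$ one has $P_B(w,y) = \mathcal{C}_{d,\alpha}\big((r^2 - |w - x|^2)/(|y - x|^2 - r^2)\big)^{\alpha/2}|y - w|^{-d}$ for a constant $\mathcal{C}_{d,\alpha}$ depending only on $d,\alpha$, and $\int_{B^c} P_B(w,y)\, dy = 1$ (the process leaves $B$ by a jump, a.s.).

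I would then differentiate this representation at the centre $w = x$. Since $w \mapsto P_B(w,y)$ is smooth on $B$, its logarithmic gradient equals $-\alpha(w - x)/(r^2 - |w - x|^2) + d\,(y - w)/|y - w|^2$; at $w = x$ the first term vanishes, so the gradient $\nabla_w P_B(\cdot,y)$ evaluated at $x$ has Euclidean norm $d\, P_B(x,y)/|y - x|$, which is at most $(d/r)\, P_B(x,y)$ since $|y - x| \ge r$ on $B^c$. Granting that one may differentiate $f(w) = \int_{B^c} P_B(w,y) f(y)\,dy$ under the integral sign at $w = x$, it follows that $|\nabla f(x)| \le (d/r)\,\|f\|_{\infty} \int_{B^c} P_B(x,y)\, dy = (d/r)\,\|f\|_{\infty}$, and letting $r \uparrow \delta_D(x)$ gives the claim.

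The only technical point to dispatch --- the main, rather mild, obstacle --- is the interchange of $\nabla$ and $\int$ at $w = x$. Restricting $w$ to $B(x,r/2)$ one has $r^2 - |w - x|^2 \ge \tfrac34 r^2$ and $|y - w| \ge \tfrac12|y - x| \ge \tfrac12 r$ for $y \in B^c$, whence $|\nabla_w P_B(w,y)| \le C(d,\alpha,r)\,(|y - x|^2 - r^2)^{-\alpha/2}|y - x|^{-d}$ on $B(x,r/2) \times B^c$. This majorant is integrable over $B^c$ (integrable near $\partial B$ because $\alpha/2 < 1$, and of order $|y - x|^{-d-\alpha}$ at infinity), so multiplying it by $\|f\|_{\infty}$ produces an integrable dominating function, and the dominated convergence theorem justifies the differentiation. (One could alternatively invoke the Ikeda--Watanabe formula together with the Green-function gradient estimate \cite[Corollary 3.3]{BKN2002}, but the direct computation above is shorter.)
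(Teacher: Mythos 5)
Your proof is correct and is essentially the argument the paper has in mind: the paper omits the proof, referring to \cite[Lemma 3.2]{BKN2002}, which is proved by exactly this route — represent $f$ on a ball $B(x,r)$, $r<\delta_D(x)$, via the Poisson kernel, use the gradient bound $|\nabla_w P_B(w,y)|_{w=x}\le (d/r)P_B(x,y)$ (you derive it directly from the Blumenthal--Getoor--Ray formula rather than citing \cite[Lemma 3.1]{BKN2002}), differentiate under the integral, and let $r\uparrow\delta_D(x)$. The computation of the logarithmic gradient at the centre, the constant $d$, and the domination argument justifying the interchange are all in order.
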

The proof of Lemma \ref{alphaharmonic} is almost the same as the proof of Lemma 3.2 in \cite{BKN2002} and is omitted.

\begin{proof}[proof of Theorem \ref{mainthm}]
Fix arbitrary $z = (z_1,\ldots,z_d) \in D$ and $i \in \{1,\ldots,d\}$. Similarly like in Lemma \ref{betau} for any $x = (x_1,\ldots,x_d) \in \R^d$ put $\hat{x} = x - 2 e_i (x_i - z_i)$. Using \cite[Lemma 3.5]{BB2000} let us choose $r_0 = r_0(d,\alpha,\|q\|_{\infty}) \in (0,1]$ such that for any $r \in (0,r_0]$ and any ball od radius $r$ contained in $D$ the conditional gauge function for that ball is bounded from below by $1/2$ and from above by $2$.

Let $r  = (\delta_D(z) \wedge r_0)/2$ and $B = B(z,r)$. By (\ref{representation}) we get
\begin{equation}
\label{decomposition}
u(x) = f(x) + G_B(qu)(x), \quad x \in B,
\end{equation}
where $f(x) = E^x u(X_{\tau_B})$. The function $f$ is $\alpha$-harmonic on $B$.  

When $u$ is nonnegative on $\R^d$ by our choice of $r_0$ and by (2.15) in \cite{BB2000} we obtain $f(x) \le 2 u(x)$, $x \in B$. By \cite[Lemma 3.2]{BKN2002} it follows that
$$
|\nabla f(x)| \le d \frac{f(x)}{\delta_B(x)}
\le 4d \frac{u(x)}{\delta_B(z)} 
\le c \frac{u(x)}{\delta_D(z) \wedge 1}, \quad \quad x \in B(z,r/2),
$$
where $c = c(d,\alpha,\|q\|_{\infty})$.

If $u$ is not nonnegative on $\R^d$ but $\|u\|_{\infty} < \infty$ by Lemma \ref{alphaharmonic} we get
$$
|\nabla f(x)| \le d \frac{\|f\|_{\infty}}{\delta_B(x)}
\le c \frac{\|u\|_{\infty}}{\delta_D(z) \wedge 1}, \quad \quad x \in B(z,r/2),
$$
where $c = c(d,\alpha,\|q\|_{\infty})$.

Let $K = B$ when $u$ is nonnegative and $K = \R^d$ when $u$ is not nonnegative in $\R^d$ and $\|u\|_{\infty} < \infty$. It follows that for any $x \in B(z,r/2)$ we have
\begin{equation}
\label{harmonic}
|E^x u(X_{\tau_B}) - E^{\hat{x}} u(X_{\tau_B})| \le 
c \frac{\sup_{y \in K} |u(y)|}{\delta_D(z) \wedge 1} |x - \hat{x}| \le
c \frac{\sup_{y \in K} |u(y)|}{\delta_D(z) \wedge 1} |x - z|,
\end{equation}
where $c = c(d,\alpha,\|q\|_{\infty})$.

Let us consider the following inequality
\begin{equation}
\label{main}
|u(x) - u(\hat{x})| \le
c \frac{\sup_{y \in K} |u(y)|}{\delta_D(z) \wedge 1} |x - z|^{\beta}
\quad \quad x \in B(z,r/2),
\end{equation}
for some $\beta \in [0,1]$ and $c = c(d,\alpha,\beta,q,\eta)$.

Note that $|x - \hat{x}| \le 2 |x - z|$. Recall that $r \le 1/2$ so $|x - z| \le 1/2$ for $x \in B(z,r)$. If (\ref{main}) holds then for any $x \in B(z,r/2)$ we have
\begin{eqnarray}
\nonumber
|q(x) u(x) - q(\hat{x}) u(\hat{x})| &\le&
|u(x)| |q(x) - q(\hat{x})| + |q(\hat{x})| |u(x) - u(\hat{x})| \\
\nonumber
&\le& c \sup_{y \in K} |u(y)| |x - \hat{x}|^{\eta} +
c \sup_{y \in D} |q(y)| \frac{\sup_{y \in K} |u(y)|}{\delta_D(z) \wedge 1} |x - z|^{\beta} \\
\label{festimate}
&\le& c \frac{\sup_{y \in K} |u(y)|}{\delta_D(z) \wedge 1} |x - z|^{\beta \wedge \eta},
\end{eqnarray}
where $c = c(d,\alpha,\beta,q,\eta)$. Note that if $\beta < 1 - \alpha$ then $\beta \wedge \eta = \beta$ and if $\beta > 1 - \alpha$ then $\beta \wedge \eta > 1 - \alpha$. 

Assume now that (\ref{main}) holds for some ($\alpha \in (0,1)$, $\beta \in [0,1 - \alpha)$) or ($\alpha \in (0,1]$, $\beta \in (1 - \alpha,1)$) and $c = c(d,\alpha,\beta,q,\eta)$.

If $\alpha \in (0,1)$, $\beta \in [0,1 - \alpha)$ then by (\ref{festimate}) and Lemma \ref{betau} we obtain for $x \in B$
$$
|G_B(qu)(x) - G_B(qu)(\hat{x})| \le
c \frac{\sup_{y \in K} |u(y)|}{\delta_D(z) \wedge 1} |x - z|^{\beta + \alpha},
$$
where $c = c(d,\alpha,\beta,q,\eta)$.
If $\alpha \in (0,1]$, $\beta \in (1 - \alpha,1)$ then by (\ref{festimate}) and Lemma \ref{betau} we obtain for $x \in B$
$$
|G_B(qu)(x) - G_B(qu)(\hat{x})| \le
c \frac{\sup_{y \in K} |u(y)|}{\delta_D(z) \wedge 1} |x - z|,
$$
where $c = c(d,\alpha,\beta,q,\eta)$.

Joining this with (\ref{harmonic}) we obtain in view of (\ref{decomposition}) that if (\ref{main}) holds for some ($\alpha \in (0,1)$, $\beta \in [0,1 - \alpha)$) or ($\alpha \in (0,1]$, $\beta \in (1 - \alpha,1)$) and $c = c(d,\alpha,\beta,q,\eta)$ then
\begin{equation}
\label{main1}
|u(x) - u(\hat{x})| \le
c \frac{\sup_{y \in K} |u(y)|}{\delta_D(z) \wedge 1} |x - z|^{(\beta + \alpha) \wedge 1}
\quad \quad x \in B(z,r/2),
\end{equation}
for $c = c(d,\alpha,\beta,q,\eta)$.

Note that (\ref{main}) holds trivially for $\beta = 0$. Assume first that $\alpha \in (0,1)$ and $k \alpha \ne 1 - \alpha$ for any $k \in \N$. Then repeating the above procedure we obtain that (\ref{main}) holds for $\beta = 0, \alpha, 2 \alpha, \ldots$ and finally for $\beta = 1$. 

Assume now that $\alpha \in (0,1)$ and $k_0 \alpha = 1 - \alpha$ for some $k_0 \in \N$. Then we obtain that (\ref{main}) holds for $\beta = 0, \alpha, 2 \alpha, \ldots, k_0 \alpha$. Then (\ref{main}) holds for any $\beta \in [0,k_0 \alpha]$. In particular, it holds for $\beta = k_0 \alpha - \alpha/2$. By (\ref{main1}) we obtain that (\ref{main}) holds for $\beta = k_0 \alpha + \alpha/2 \in (1 - \alpha,1)$. Then, again by (\ref{main1}) we obtain that (\ref{main}) holds for $\beta = 1$. 

Finally assume that $\alpha = 1$. (\ref{beta0}) gives that (\ref{main}) holds for $\beta = 1/2$. Then by (\ref{main1}) we obtain that (\ref{main}) holds for $\beta = 1$. 

Now let us fix arbitrary $w \in D$ and put $s = (\delta_D(w) \wedge r_0)/8$. We will show that $\nabla u(w)$ exists. Let us take $x,y \in B(w,s)$. Since $z \in D$ was arbitrary one can take $z = (x + y)/2$ and choose the Cartesian coordinate system and $i$ so that $y = \hat{x} = x - 2 e_i (x_i - z_i)$. We put $r = (\delta_D(z) \wedge r_0)/2$ as before. Note that 
$$
\delta_D(z) \ge \delta_D(w) - \frac{\delta_D(w)}{8} = \frac{7 \delta_D(w)}{8} \ge 7 s, \quad \quad s \le \frac{r_0}{8}.
$$
We also have
$$
|x - z| = \frac{|x - y|}{2} \le s \le \left(\frac{\delta_D(z)}{7} \wedge \frac{r_0}{8}\right) < \frac{r}{2},
$$
so $x \in B(z,r/2)$.
On the other hand we have $\delta_D(z) \le s + \delta_D(w)$ so
$$
r = \frac{\delta_D(z) \wedge r_0}{2} \le \frac{(s + \delta_D(w)) \wedge r_0}{2} \le
\frac{s}{2} + \frac{\delta_D(w) \wedge r_0}{2} = \frac{9s}{2}, \quad \text{and} \quad |w - z| \le s.
$$
Hence $B(z,r) \subset B(w,11s/2)$ which gives $\sup_{p \in B(z,r)}|u(p)| \le \sup_{p \in B(w,11s/2)}|u(p)| $.

By (\ref{main}) for $\beta = 1$ we obtain
$$
|u(x) - u(y)| \le c \frac{\sup_{p \in K'}|u(p)|}{\delta_D(z) \wedge 1} |x - z| \le c \frac{\sup_{p \in K'}|u(p)|}{\delta_D(w) \wedge 1} |x - y|,
$$
where $c = c(d,\alpha,q,\eta)$ and $K' = B(w,11s/2)$ when $u$ is nonnegative in $\R^d$ and $K' = \R^d$ when $u$ is not nonnegative in $\R^d$ and $\|u\|_{\infty} < \infty$. Since $x, y \in B(w,s)$ were arbitrary we obtain that $q u$ is H{\"o}lder continuous with H{\"o}lder exponent $\eta \wedge 1$ in $B(w,s)$.

Using (\ref{representation}) for $W = B(w,s)$ and Lemma \ref{gradientGreen} for $B(w,s)$ we obtain that $\nabla u(w)$ exists. Since $w \in D$ was arbitrary this implies that $\nabla u$ is well defined on $D$.

Now again let us fix arbitrary $z \in D$, $i \in \{1,\ldots,d\}$ and put $r = (\delta_D(z) \wedge r_0)/2$, $B = B(z,r)$, $K = B$ when $u$ is nonnegative and $K = \R^d$ when $u$ is not nonnegative and $\|u\|_{\infty} < \infty$. 

When $u$ is nonnegative, by the Harnack principle (see \cite[Theorem 4.1]{BB2000}) we have
\begin{equation}
\label{Harnack}
\sup_{y \in K} |u(y)| = \sup_{y \in B} u(y) \le c u(z).
\end{equation}
By the proof of \cite[Theorem 4.1]{BB2000} it follows that $c = c(d,\alpha,\|q\|_{\infty})$.

Put $x = z + h e_i$, $h \in (0,r/2)$. By (\ref{main}) for $\beta = 1$ we get
$$
|u(z + h e_i) - u(z - h e_i)| \le 
c \frac{\sup_{y \in K} |u(y)|}{\delta_D(z) \wedge 1} h,
$$
where $c = c(d,\alpha,q,\eta)$. Since $i \in \{1,\ldots,d\}$ is arbitrary it follows that 
\begin{equation}
\label{firstgrad}
|\nabla u(z)| \le c \frac{\sup_{y \in K} |u(y)|}{\delta_D(z) \wedge 1}. 
\end{equation}
Of course this gives (\ref{norm}). When $u$ is nonnegative (\ref{firstgrad}) and (\ref{Harnack}) imply (\ref{nonnegative}).
\end{proof}

\section{Proof of Proposition \ref{counterexample} and Theorem \ref{sharpthm}}

First we prove Proposition \ref{counterexample}. By saying that $\nabla u(x)$ exists we understand that for each $i \in \{1,\ldots,d\}$ $\lim_{h \to 0} (u(x + h e_i) - u(x))/h$ exists and is finite. We say that a function is $0$ H{\"o}lder continuous if it is bounded and measureable.

\begin{proof}[proof of Proposition \ref{counterexample}]
Let us choose arbitrary point $w \in D$ and $r \in (0,\delta_D(w)/3)$. Put 
\begin{equation}
\label{qdef}
q(x) = 1_{B(w,r)}(x) (r^2 - |x - w|^2)^{1 - \alpha}, \quad \quad x \in \R^d.
\end{equation}
It may be easily shown that $q(x)$ is $(1 - \alpha)$ H{\"o}lder continuous. We may assume that $r$ is sufficiently small so that $(D,q)$ is gaugeable. (The fact that $(D,q)$ is gaugeable for small $r$ follows by Khasminski's lemma, see page 57 in \cite{BB1999}.) Put $u(x) = E^x(e_q(\tau_D))$, $x \in \R^d$. $u(x)$ is the gauge function for $(D,q)$. By Theorem 4.1 in \cite{BB2000} $u(x)$ is regular $q$-harmonic in $D$. Note that $u$ is continuous and bounded on $D$.

Fix $z \in \partial B(w,r)$. We may assume that the Cartesian coordinate system $(x_1,\ldots,x_d)$ is chosen so that $z = (0,\ldots,0)$ and $w = (r,0,\ldots,0)$. Let $B = B(0,r)$. We will show that $\nabla u(0)$ does not exist. On the contrary assume that $\nabla u(0)$ exists. By (\ref{representation}) we have
\begin{equation}
\label{sum}
u(x) = E^x(u(X_{\tau_B})) + G_B(qu)(x), \quad \quad x \in B.
\end{equation}
Of course $\nabla E^x(u(X_{\tau_B}))$ exists for $x \in B$ (see (10) and Lemma 3.2 in \cite{BKN2002}). Put $f_0(y) = u(0) q(y)$ and $f_1(y) = (u(y) - u(0)) q(y)$. We have 
\begin{equation}
\label{uq}
u(y) q(y) = f_0(y) + f_1(y).
\end{equation}
For any $s > 0$ put $B_+(0,s) = \{(y_1,\ldots,y_d) \in B(0,s): \, y_1 > 0\}$, $B_+ = B_+(0,r)$ and $\hat{y} = y - 2 e_1 y_1$ for $y = (y_1,\ldots,y_d) \in \R^d$. Recall that we have assumed that $\nabla u(0)$ exists. By this and boundedness of $u$ we get $|u(y) - u(0)| \le c |y|$ for some $c = c(w,z,r,D,d,\alpha,q)$ and any $y \in \R^d$. It follows that for $y \in B_+(0,r/2)$
$$
|f_1(y) - f_1(\hat{y})| = |f_1(y)| = |u(y) - u(0)| |q(y)| \le c \|q\|_{\infty} |y|,
$$
where $c = c(w,z,r,D,d,\alpha,q)$. By (\ref{>1-alpha}) for any $x \in B(0,r/2)$ we have
\begin{equation}
\label{diff1a}
|G_B f_1(x) - G_B f_1(\hat{x})| \le
c |x| + c \frac{\sup_{y \in B} |f_1(y)|}{r} |x| 
\le c |x|,
\end{equation}
for some $c = c(w,z,r,D,d,\alpha,q)$.

Now let us consider the case $\alpha \in (0,1]$, $d > \alpha$. By Lemmas \ref{coupling} and \ref{lowerboundGreen} we get for $x \in B_+(0,r/4)$
\begin{eqnarray}
\nonumber
G_B f_0(x) - G_B f_0(\hat{x}) &=&
\int_{B_+} (G_{B}(x,y) - G_{B}(\hat{x},y)) (f_0(y) - f_0(\hat{y})) \, dy \\
\nonumber
&=& u(0) \int_{B_+} (G_{B}(x,y) - G_{B}(\hat{x},y)) q(y) \, dy \\
\label{ux0}
&\ge& c u(0) \int_{K(r,x)} \frac{|x - \hat{x}|}{|x - y|^{d - \alpha} |\hat{x} -y|} q(y) \, dy,
\end{eqnarray}
where $c = c(w,z,r,D,d,\alpha,q)$ and $K(r,x)$ is defined in Lemma \ref{lowerboundGreen} for $i = 1$. Since $u$ is positive on $D$ we have $u(0) > 0$.

Note that for $x \in B_+(0,r/4)$ and $y \in K(r,x)$ we have $|x - y| \le (3/2) |y|$, $|\hat{x} - y| \le (3/2) |y|$. Hence (\ref{ux0}) is bounded from below by
$$
c u(0) |x - \hat{x}| \int_{K(r,x)} |y|^{\alpha - d - 1} q(y) \, dy,
$$ 
where $c = c(w,z,r,D,d,\alpha,q)$. One can easily show that for any $x \in B_+(0,r/4)$ and $y \in K(r,x)$ we have $q(y) \ge c |y|^{1 - \alpha}$, where $c = c(d,\alpha,r)$. Let $x = (x_1,0\ldots,0) \in B_+(0,r/4)$. It follows that 
\begin{equation}
\label{GBquotient}
\frac{G_B f_0(x) - G_B f_0(\hat{x})}{|x - \hat{x}|} \ge 
c u(0) \int_{K(r,x)} |y|^{- d} \, dy,
\end{equation}
where $c = c(w,z,r,D,d,\alpha,q)$. It is clear that if $x = (x_1,0\ldots,0)$ tends to $0$ then the right-hand side of (\ref{GBquotient}) tends to $\infty$. This, (\ref{sum}), the fact that $\nabla E^x(u(X_{\tau_B}))$ exists for $x \in B$, (\ref{uq}) and (\ref{diff1a}) give contradiction with the assumption that $\nabla u(0)$ exists.

Now we will consider the case $d = \alpha = 1$. Recall that in this case $q(y) = 1_{B(w,r)}(y) = 1_{(0,2r)}(y)$. By Lemmas \ref{coupling} and \ref{Greenupper2} we get for $x \in (0,r/4)$
\begin{eqnarray}
\nonumber
\frac{G_B f_0(x) - G_B f_0(\hat{x})}{|x - \hat{x}|} &=&
\frac{u(0)}{2 |x|} \int_{B_+} (G_{B}(x,y) - G_{B}(\hat{x},y)) q(y) \, dy \\
\label{dyy}
&\ge& \frac{u(0)}{15 \pi} \int_{2x}^{r/2} \frac{dy}{y}.
\end{eqnarray}
It is clear that if $x \to 0$ then (\ref{dyy}) tends to $\infty$. This, (\ref{sum}), the fact that $\frac{d}{dx} E^x(u(X_{\tau_B}))$ exists for $x \in B$, (\ref{uq}) and (\ref{diff1a}) give contradiction with the assumption that $u'(0)$ exists.
\end{proof}

Now we will prove lower bound gradient estimates. The idea of the proof is to some extent similar to the proof of lower bound gradient estimates in \cite{BKN2002}. The main difference is the use of Lemma \ref{uppereps} below instead of \cite[Lemma 5.4]{BKN2002}. There are essential differences in proofs of Lemma \ref{uppereps} and \cite[Lemma 5.4]{BKN2002}. The key arguments in the proof of Lemma \ref{uppereps} are based on Lemma \ref{gradientGreen}.

We will use the notation as in \cite{BKN2002}. For $x = (x_1,\ldots,x_d) \in \R^d$ we write $x = (\tilde{x},x_d)$, where $\tilde{x} = (x_1,\ldots,x_{d-1})$. In order to include the case $d = 1$ in the considerations below we make the convention that for $x \in \R$, $\tilde{x} = 0$ and we set $\R^0 = \{0\}$. 

We fix a Lipschitz function $\Gamma: \R^{d-1} \to \R$ with a Lipschitz constant $\lambda$, so that $|\Gamma(\tilde{x}) - \Gamma(\tilde{y})| \le \lambda |\tilde{x} - \tilde{y}|$ for $\tilde{x}, \tilde{y} \in \R^{d - 1}$. We put $\rho(x) = x_d - \Gamma(\tilde{x})$. $D$ denotes the special Lipschitz domain defined by $D = \{x \in \R^d: \, \rho(x) > 0\}$. The function $\rho(x)$ serves as vertical distance from $x \in D$ to $\partial{D}$. We define the ``box"
$$
\Delta(x,a,r) = \{y \in \R^d: \, 0 < \rho(y) < a, \, |\tilde{x} - \tilde{y}| < r\},
$$
where $x \in \R^d$ and $a,r > 0$. We note that $\Delta(x,a,r)$ is a Lipschitz domain. We also define the ``inverted box"
$$
\nabla(x,a,r) = \{y \in \R^d: \, -a < \rho(y) \le 0, \, |\tilde{x} - \tilde{y}| < r\}.
$$
The same symbol $\nabla$ is used for the gradient but the meaning will be clear from the context.

For $r > 0$ and $Q \in \partial D$ we set $\Delta_r = \Delta(Q,r,r)$ and $G_r = G_{\Delta_r}$. For a nonnegative function $u$ we put 
$$
u^{\Delta_r}(x) = E^x u(X_{\tau_{\Delta_r}}), \quad x \in \R^d.
$$

Fix $Q \in \partial D$ and assume that a Borel function $q$ satisfies
\begin{equation}
\label{qbox}
|q(x) - q(y)| \le A |x - y|^{\eta},
\end{equation}
for some $A > 0$, $\eta \in (1-\alpha,1]$ and all $x,y \in \Delta_{s_0}$ for some $s_0 \in (0,1]$.

Now we will repeat the assertion of Lemma 5.3 \cite{BKN2002}. Note that the assertion of Lemma 5.3 in \cite{BKN2002} holds for all $\alpha \in (0,2)$ under the condition that $q 1_{\Delta_{s_0}} \in \calJ^{\alpha}$ for some $s_0 \in (0,1]$. This condition follows from (\ref{qbox}). 

For every $\eps > 0$ there exists a constant $r_0 = r_0(d,\lambda,\alpha,\eta, q,s_0,\eps) \le s_0 \le 1$ such that if $r \in (0,r_0]$ and $u: \R^d \to [0,\infty)$ is $q$-harmonic and bounded in $\Delta_r$ then
\begin{equation}
\label{eps1}
(1 - \eps) u^{\Delta_r}(x) \le u(x) \le (1 + \eps) u^{\Delta_r}(x), \quad \quad x \in \R^d,
\end{equation}
and
\begin{equation}
\label{eps2}
G_r(|q| u)(x) \le \eps u^{\Delta_r}(x), \quad \quad x \in \R^d,
\end{equation}

\begin{lemma}
\label{uppereps}
Let $\alpha \in (0,1]$ and $\eps \in (0,1/2]$. There exist constants $c = c(d,\alpha,\eta,q)$ and $\kappa = \kappa(d,\lambda,\alpha,\eta,q,r_0,s_0,\eps) \le r_0$ such that if $0 < r \le \kappa$, $u: \R^d \to [0,\infty)$ is $q$-harmonic and bounded in $\Delta_r$ then
$$
|\nabla G_r(qu)(x)| \le \eps c \frac{u(x)}{\delta_{\Delta_r}(x)}, \quad \quad x \in \Delta_r.
$$
\end{lemma}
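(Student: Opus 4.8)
The plan is to localize around a fixed point $x_0\in\Delta_r$. Set $\delta=\delta_{\Delta_r}(x_0)$; since $\Delta_r=\Delta(Q,r,r)$ one has $\delta\le r$. Put $B=B(x_0,\delta/2)$, so $\overline{B}\subset\Delta_r$. By the strong Markov property at $\tau_B$ one has $G_r(qu)(x)=G_B(qu)(x)+h(x)$ for $x\in B$, where $h(x)=E^x\big[G_r(qu)(X_{\tau_B})\big]$ is $\alpha$-harmonic in $B$ (note $G_r(qu)\in\calL$, being bounded with support in the bounded set $\Delta_r$). Both summands are differentiable at $x_0$ — $h$ because it is $\alpha$-harmonic in $B$, and $G_B(qu)$ by Lemma \ref{gradientGreen} (see below) — so $\nabla G_r(qu)(x_0)=\nabla G_B(qu)(x_0)+\nabla h(x_0)$, and it suffices to bound each term by a constant multiple of $\eps\,u(x_0)/\delta$.

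For the $\alpha$-harmonic remainder I would put $\tilde h(x)=E^x\big[G_r(|q|u)(X_{\tau_B})\big]\ge 0$. Since $|G_r(qu)|\le G_r(|q|u)$ pointwise, both $\tilde h+h$ and $\tilde h-h$ are nonnegative and $\alpha$-harmonic in $B$, so the nonnegative gradient estimate \cite[Lemma 3.2]{BKN2002} gives $|\nabla h(x_0)|\le 2d\,\tilde h(x_0)/\delta$. By (\ref{eps2}) we have $G_r(|q|u)\le\eps\,u^{\Delta_r}$ everywhere, and $u^{\Delta_r}$ is $\alpha$-harmonic in $\Delta_r\supset\overline{B}$, hence $\tilde h(x_0)\le\eps\,E^{x_0}[u^{\Delta_r}(X_{\tau_B})]=\eps\,u^{\Delta_r}(x_0)\le 2\eps\,u(x_0)$, the last step by (\ref{eps1}) and $\eps\le 1/2$. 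Thus $|\nabla h(x_0)|\le 4d\,\eps\,u(x_0)/\delta$, with the factor $\eps$ coming straight from the conditional-gauge bound (\ref{eps2}).

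The genuine work is the local potential $G_B(qu)$. First apply Theorem \ref{mainthm} on the bounded open set $\Delta_r$ — legitimate because $q$ is H\"older continuous on $\Delta_r$ with exponent $\eta>1-\alpha$ (by (\ref{qbox}), as $\Delta_r\subset\Delta_{s_0}$) and $u\ge 0$ is $q$-harmonic there — to get $|\nabla u(y)|\le c\,u(y)/\delta_{\Delta_r}(y)$ for $y\in\Delta_r$, with $c=c(d,\alpha,\eta,q)$. Combined with the Harnack inequality for $q$-harmonic functions on $B(x_0,\delta)$ (\cite[Theorem 4.1]{BB2000}), this bounds $u$ on $B$ by $c\,u(x_0)$ and shows $u$ is Lipschitz on $B$ with constant at most $c\,u(x_0)/\delta$; since $|y-y'|\le\delta\le 1$ for $y,y'\in B$, this upgrades to H\"older continuity of $qu$ on $B$ of exponent $\eta$ with constant $\le c\,u(x_0)\,\delta^{-\eta}$, and $qu$ is bounded on $B$. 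Now Lemma \ref{gradientGreen}, applied to the ball $B$ of radius $\delta/2\le 1$, shows $\nabla G_B(qu)(x_0)$ exists and, up to the constant,
$$|\nabla G_B(qu)(x_0)|\le c\,u(x_0)\,\delta^{-\eta}\cdot\delta^{\eta+\alpha-1}\big(1+|\log\delta|\big)=c\,\frac{u(x_0)}{\delta}\cdot\delta^{\alpha}\big(1+|\log\delta|\big).$$
The function $t\mapsto t^{\alpha}(1+|\log t|)$ is increasing on $(0,e^{1-1/\alpha})$ (and $e^{1-1/\alpha}\le 1$ since $\alpha\le 1$) and tends to $0$ as $t\downarrow 0$; so choosing $\kappa=\kappa(d,\lambda,\alpha,\eta,q,r_0,s_0,\eps)\le r_0$ small enough that $\kappa\le e^{1-1/\alpha}$ and $\kappa^{\alpha}(1+|\log\kappa|)\le\eps$ forces $\delta^{\alpha}(1+|\log\delta|)\le\eps$ for every $x_0\in\Delta_r$ whenever $r\le\kappa$ (since then $\delta\le r\le\kappa$). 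Hence $|\nabla G_B(qu)(x_0)|\le c\,\eps\,u(x_0)/\delta$, and adding the remainder term gives $|\nabla G_r(qu)(x_0)|\le\eps\,c\,u(x_0)/\delta_{\Delta_r}(x_0)$ with $c=c(d,\alpha,\eta,q)$.

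The main obstacle is precisely the estimate of $\nabla G_B(qu)(x_0)$. For $\alpha\in(1,2)$ one could integrate $y\mapsto|\nabla_x G_B(x,y)|$ directly (as in \cite[Lemma 5.4]{BKN2002}), but for $\alpha\in(0,1]$ that kernel is not integrable, so one must go through the sharp pointwise gradient estimate of Lemma \ref{gradientGreen} (which rests on the Green-function estimates of Section 3). The delicate point is that $qu$ is only H\"older with a large constant of order $u(x_0)\delta^{-\eta}$ and Lemma \ref{gradientGreen} carries a logarithmic factor, yet the product rearranges exactly as $\delta^{-1}$ times $\delta^{\alpha}(1+|\log\delta|)$, which is $\le\eps$ once the localization radius $\kappa$ (hence $\delta$) is taken small; it is the monotonicity of $t\mapsto t^{\alpha}(1+|\log t|)$ near $0$ that makes this choice of $\kappa$ work uniformly in $x_0$. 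Note also that for the $\alpha$-harmonic remainder one cannot afford the cruder bound $|\nabla h(x_0)|\le 2d\|h\|_{\infty}/\delta$ from Lemma \ref{alphaharmonic}, because $\|G_r(qu)\|_{\infty}$ need not be comparable with $u(x_0)$; using the nonnegative gradient estimate together with the $\alpha$-harmonicity of $u^{\Delta_r}$ is what keeps $u(x_0)$, rather than a global supremum of $u$, on the right-hand side.
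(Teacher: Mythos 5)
Your proof is correct and follows essentially the same route as the paper: the same localization to $B=B(x_0,\delta_{\Delta_r}(x_0)/2)$, the same splitting $G_r(qu)=G_B(qu)+E^{x_0}G_r(qu)(X_{\tau_B})$, and the same use of Theorem \ref{mainthm} plus Lemma \ref{gradientGreen} to handle the local potential, with $\kappa$ chosen so that the resulting power--log factor is at most $\eps$. The only (harmless) variation is in the $\alpha$-harmonic remainder, where you apply the nonnegative gradient estimate to $\tilde h\pm h$ instead of differentiating the Poisson kernel as the paper does via \cite[Lemma 3.1]{BKN2002}; both yield the same bound $c\eps\,u(x_0)/\delta_{\Delta_r}(x_0)$.
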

\begin{proof}
Let us choose 
$$
\kappa = \max\{s \in (0,r_0]: \, \sup_{0 < a \le s} a^{\eta + \alpha - 1} (1 + |\log a|) \le \eps\}.
$$
Fix $r \in (0,\kappa]$ and $x_0 \in \Delta_r$. Note that $\delta_{\Delta_r}(x_0) \le r \le 1$. Let $B = B(x_0,\delta_{\Delta_r}(x_0)/2)$. We have
\begin{equation}
\label{Grsplit}
G_r(qu)(x_0) = G_B(qu)(x_0) + E^{x_0} G_r(qu)(X_{\tau_B}).
\end{equation}
We will estimate gradient of two terms on the right-hand side of (\ref{Grsplit}) separately.

Let $P_B(x,z)$, $x \in B$, $z \in \text{int}(B^c)$ be the Poisson kernel for $B$ (that is the density of the $P^x$ distribution of $X(\tau_B)$ \cite{BGR1961}). By Lemma 3.1 in \cite{BKN2002} we have
\begin{eqnarray}
\nonumber
\left|\nabla \left(E^{x_0} G_r(qu)(X_{\tau_B})\right)\right|
&=& \left|\nabla_x \int_{B^c} P_B(x_0,z) G_r(qu)(z) \, dz \right| \\
\nonumber
&\le& \int_{B^c} |\nabla_x P_B(x_0,z)| G_r(|q|u)(z) \, dz \\
\nonumber
&\le& \frac{c}{\delta_{\Delta_r}(x_0)} \int_{B^c} P_B(x_0,z) G_r(|q|u)(z) \, dz \\
\label{normGrqu}
&=& \frac{c}{\delta_{\Delta_r}(x_0)} E^{x_0} G_r(|q|u)(X_{\tau_B}),
\end{eqnarray}
where $c = c(d,\alpha)$.

By (\ref{Grsplit}) for $|q|$ instead of $q$ we obtain that $E^{x_0} G_r(|q|u)(X_{\tau_B}) \le G_r(|q|u)(x_0)$. Using this and (\ref{eps1}), (\ref{eps2}) we obtain that (\ref{normGrqu}) is bounded from above by $c \eps u(x_0)/\delta_{\Delta_r}(x_0)$, where $c = c(d,\alpha)$.

By Theorem \ref{mainthm} for any $x,y \in B$ we have
$$
|u(x) - u(y)| \le \frac{c}{\delta_{\Delta_r}(x_0)} \left(\sup_{z \in B} u(z) \right) |x - y|,
$$
for some $c = c(d,\alpha,\eta,q)$. Using this and (\ref{qbox}) for any $x,y \in B$ we get
\begin{eqnarray*}
|q(x) u(x) - q(y) u(y)| &\le&
|q(x)| |u(x) - u(y)| + |q(x) -q(y)| |u(y)| \\
&\le& \frac{c (A + 1)}{\delta_{\Delta_r}(x_0)} 
\left(\sup_{z \in B} u(z) \right) |x - y|^{\eta},
\end{eqnarray*}
for some $c = c(d,\alpha,\eta,q)$.

Hence by Lemma \ref{gradientGreen} we obtain
\begin{equation}
\label{GBqu}
|\nabla G_B(qu)(x_0)| \le \frac{c (A + 1)}{\delta_{\Delta_r}(x_0)} 
\left(\sup_{z \in B} u(z) \right) (\delta_{\Delta_r}(x_0))^{\eta + \alpha - 1} (1 + |\log(\delta_{\Delta_r}(x_0))|),
\end{equation}
where $c = c(d,\alpha,\eta,q)$. 

Note that $\delta_{\Delta_r}(x_0) \le r \le \kappa$. By our choice of $\kappa$ we have 
$$
(\delta_{\Delta_r}(x_0))^{\eta + \alpha - 1} (1 + |\log(\delta_{\Delta_r}(x_0))|) \le \eps.
$$

Using this and the Harnack inequality (see (\ref{Harnack}) with $y$ changed to $z$ and $z$ changed to $x_0$) we obtain that the right-hand side of (\ref{GBqu}) is bounded from above by $c \eps u(x_0)/\delta_{\Delta_r}(x_0)$, where $c = c(d,\alpha,\eta,q)$.
\end{proof}

The next lemma is similar to Lemma 5.6 in \cite{BKN2002}. 
\begin{lemma}
\label{lowerbox}
Let $\alpha \in (0,1]$. There are constants $c = c(d,\alpha,\lambda)$, $h = h(d,\alpha,\lambda)$ and $r_1 = r_1(d,\alpha, \lambda, \eta, q, s_0)$ such that if $0 < r \le r_1$ and $u$ is nonnegative in $\R^d$, $q$-harmonic and bounded in $\Delta_r$ and vanishes in $\nabla(Q,r,r)$ then
$$
|\nabla u(x)| \ge c \frac{u(x)}{\delta_{\Delta_r}(x)}, \quad \quad x \in \Delta(Q,r h,r/2).
$$
\end{lemma}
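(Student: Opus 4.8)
The plan is to split off the potential term via the representation formula and reduce the estimate to its purely $\alpha$-harmonic counterpart for $u^{\Delta_r}$, the potential contribution being absorbed by Lemma~\ref{uppereps}. Fix first a number $\eps = \eps(d,\alpha,\lambda,\eta,q) \in (0,1/2]$, to be specified at the end, and let $r_1 := \kappa$ be the radius produced by Lemma~\ref{uppereps} for this $\eps$; since $\kappa \le r_0 \le s_0 \le 1$, both (\ref{eps1})--(\ref{eps2}) and (\ref{qbox}) are available for $0 < r \le r_1$. Let $u$ be as in the statement and fix $0 < r \le r_1$. On $\Delta_r$ we have the decomposition $u(x) = u^{\Delta_r}(x) + G_r(qu)(x)$ (as in (\ref{representation}); this is the identity underlying (\ref{eps1})--(\ref{eps2})); here $\nabla u$ exists on $\Delta_r$ by Theorem~\ref{mainthm} (as $u \ge 0$ on $\R^d$) and $\nabla u^{\Delta_r}$ exists because $u^{\Delta_r}$ is $\alpha$-harmonic in $\Delta_r$. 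Lemma~\ref{uppereps} and (\ref{eps1}) give, for $x \in \Delta_r$,
$$
|\nabla u(x)| \ge |\nabla u^{\Delta_r}(x)| - |\nabla G_r(qu)(x)| \ge |\nabla u^{\Delta_r}(x)| - \eps\, c_1\,\frac{u(x)}{\delta_{\Delta_r}(x)}, \qquad u/2 \le u^{\Delta_r} \le 2u,
$$
with $c_1 = c_1(d,\alpha,\eta,q)$. Hence the lemma follows once we prove the $\alpha$-harmonic lower bound $|\nabla u^{\Delta_r}(x)| \ge c_2\, u^{\Delta_r}(x)/\delta_{\Delta_r}(x)$ for $x \in \Delta(Q, r h, r/2)$, with $c_2 = c_2(d,\alpha,\lambda)$ and $h = h(d,\alpha,\lambda)$: choosing $\eps = \min\{1/2,\, c_2/(4c_1)\}$ (which depends only on $d,\alpha,\lambda,\eta,q$) and combining the two bounds yields $|\nabla u(x)| \ge (c_2/4)\, u(x)/\delta_{\Delta_r}(x)$ on $\Delta(Q, r h, r/2)$, i.e. the assertion with $c = c_2/4$, this $h$, and $r_1 = \kappa$ (a function of $d,\alpha,\lambda,\eta,q,s_0$).

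To prove the $\alpha$-harmonic estimate, put $v := u^{\Delta_r}$: it is nonnegative, $\alpha$-harmonic in $\Delta_r$, and, as $v = u$ on $(\Delta_r)^c \supset \nabla(Q,r,r)$, it vanishes on $\nabla(Q,r,r)$, hence continuously on the bottom face $\{y:\rho(y)=0,\ |\tilde Q-\tilde y|<r\}$. By scaling take $r=1$, $Q=0$, $\Gamma(0)=0$, and represent $v$ as a Poisson integral $v(x)=\int_{(\Delta_1)^c}P_{\Delta_1}(x,z)\,u(z)\,dz$ (no boundary term, since $\alpha<2$). Because $u$ vanishes on $\nabla(0,1,1)$, the integration effectively runs over $A := (\Delta_1)^c\setminus\nabla(0,1,1)$, and for $h=h(\lambda)$ small enough $A$ lies at distance at least $c_3(\lambda)>0$ from $\Delta(0,h,1/2)$; for $x\in\Delta(0,h,1/2)$ one also has, with constants depending only on $\lambda$, that $\delta_{\Delta_1}(x)$ is comparable to $\rho(x)$, that the nearest boundary point of $x$ sits on the bottom face, and that $\partial_{x_d}\delta_{\Delta_1}(x)$ is bounded away from zero. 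Differentiating the Poisson integral in the inward direction $e_d$ (legitimate, since $A$ is bounded away from $x$ and $P_{\Delta_1}(\cdot,z)$ is $\alpha$-harmonic in $\Delta_1$, so the interior gradient estimate applies) and using the sharp two-sided estimates for $P_{\Delta_1}$ and $\partial_{x_d}P_{\Delta_1}$ in the Lipschitz domain $\Delta_1$ — equivalently, the fact that a nonnegative $\alpha$-harmonic function vanishing on a piece of a Lipschitz boundary behaves near that piece like a fixed positive power of the distance, with a matching estimate for its normal derivative — one gets
$$
\partial_{x_d}P_{\Delta_1}(x,z) \ge c\,\frac{P_{\Delta_1}(x,z)}{\delta_{\Delta_1}(x)}, \qquad x\in\Delta(0,h,1/2),\ z\in A,
$$
with $c=c(d,\alpha,\lambda)$; integrating against $u(z)\,dz\ge 0$ yields $\partial_{x_d}v(x)\ge c\,v(x)/\delta_{\Delta_1}(x)$, hence $|\nabla v(x)|\ge c\,v(x)/\delta_{\Delta_1}(x)$, and undoing the scaling gives the estimate for all $r$. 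This step is free of the potential $q$ and is carried out as the corresponding step in \cite{BKN2002}; the difficulty specific to $\alpha\in(0,1]$, namely non-integrability of $y\mapsto|\nabla_x G_D(x,y)|$, does not enter here, since it concerns the potential term, already handled by Lemma~\ref{uppereps}.

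The main obstacle is precisely this last displayed inequality — the directional lower bound on the Poisson kernel of the Lipschitz box, with a constant depending only on $d$, $\alpha$ and $\lambda$. It rests on sharp boundary estimates for $\alpha$-harmonic functions and their gradients in Lipschitz domains, which in turn come from the boundary Harnack principle together with two-sided Green-function estimates; everything else — the decomposition, the absorption of $\nabla G_r(qu)$ by Lemma~\ref{uppereps}, and the tracking of the constants — is routine.
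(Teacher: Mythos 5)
Your argument is essentially the paper's: the paper proves this lemma in three lines by writing $u$ via (\ref{representation}) with $W=\Delta_r$, invoking \cite[Lemma 4.5]{BKN2002} and scaling for the lower gradient bound on the $\alpha$-harmonic part $u^{\Delta_r}$, and absorbing $\nabla G_r(qu)$ by choosing $\eps$ small in Lemma \ref{uppereps} together with (\ref{eps1}) --- exactly your decomposition and bookkeeping. The only difference is that the step you sketch at length (the lower bound $|\nabla u^{\Delta_r}(x)|\ge c\,u^{\Delta_r}(x)/\delta_{\Delta_r}(x)$ via Poisson-kernel derivative estimates in the Lipschitz box) is simply cited from \cite{BKN2002} in the paper, so your heuristic treatment of it, while not a self-contained proof, is not a gap relative to what the paper itself does.
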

\begin{proof}
The function $u$ satisfies (\ref{representation}) with $W = \Delta_r$. Using \cite[Lemma 4.5]{BKN2002} and scaling, (\ref{eps1}) and Lemma \ref{uppereps} we obtain the result by an appropriate choice of $\eps$ in Lemma \ref{uppereps}.
\end{proof}

\begin{proof}[Proof of Theorem \ref{sharpthm}]
The upper bound follows from Theorem \ref{mainthm}. The lower bound follows from Lemma \ref{lowerbox} and compactness of $\partial D \cap K$.
\end{proof}

\section{Applications}

As an application of the main results of this paper we obtain gradient estimates of eigenfunctions of the fractional Schr{\"o}dinger operator. 

\begin{corollary}
\label{Schrodinger}
Assume that $\alpha \in (0,2)$, $D \subset \R^d$ is an open bounded set,  $q \in \calJ^{\alpha -1}$ when $\alpha \in (1,2)$, or $q$ is H{\"o}lder continuous on $D$ with H{\"o}lder exponent $\eta > 1 - \alpha$ when $\alpha \in (0,1]$. Let $\{\varphi_n\}_{n =1}^{\infty}$ be the eigenfunctions of the eigenvalue problem (\ref{spectral1})-(\ref{spectral2}) for the fractional Schr{\"o}dinger operator on $D$ with zero exterior condition. Then $\nabla \varphi_n(x)$ exist for any $n \in \N$, $x \in D$ and we have
\begin{equation}
\label{Schr1}
|\nabla \varphi_1(x)| \le c \frac{\varphi_1(x)}{\delta_D(x) \wedge 1}, \quad \quad x \in D,
\end{equation}
where $c = c(D,q,\alpha,\eta)$ and
\begin{equation}
\label{Schr2}
|\nabla \varphi_n(x)| \le  \frac{c_n}{\delta_D(x) \wedge 1}, \quad \quad x \in D,
\end{equation}
where $c_n = c_n(D,q,\alpha,\eta)$. Furthermore, if additionally $D \subset \R^d$ is a bounded Lipschitz domain then there exists $\eps = \eps(D,q,\alpha,\eta)$ such that 
\begin{equation}
\label{Schr3}
|\nabla \varphi_1(x)| \ge c \frac{\varphi_1(x)}{\delta_D(x)}, \quad \quad x \in D, \quad \delta_D(x) \le \eps,
\end{equation}
where $c = c(D,q,\alpha,\eta)$. 
\end{corollary}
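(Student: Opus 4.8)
The plan is to move the eigenvalue into the potential. Fix $n \in \N$ and set $q_n = q + \lambda_n$; then the eigenfunction $\varphi_n$ is a weak solution of $\Delta^{\alpha/2}\varphi_n + q_n\varphi_n = 0$ on $D$. Since adding the constant $\lambda_n$ alters neither the H\"older seminorm of $q$ nor the defining limit of a Kato class, $q_n$ is again H\"older continuous with exponent $\eta$ when $\alpha \in (0,1]$, and again lies in $\calJ^{\alpha-1}$ when $\alpha \in (1,2)$ (here one uses that $K_{\alpha-1}$ is locally integrable for $\alpha > 1$, so constants belong to $\calJ^{\alpha-1}$). Recall from Section 2 that the $\varphi_n$ are bounded and continuous on $D$, that $\varphi_1 > 0$ on $D$, and that $\varphi_n = 0$ on $D^c$; in particular $\varphi_1 \ge 0$ on all of $\R^d$, while for $n \ge 2$ orthogonality to $\varphi_1$ forces $\varphi_n$ to change sign, but $\|\varphi_n\|_\infty < \infty$.

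The one point requiring care is to promote ``weak solution of $\Delta^{\alpha/2}\varphi_n + q_n\varphi_n = 0$'' to ``$q_n$-harmonic in $D$'', which is the form in which Theorems \ref{mainthm} and \ref{sharpthm} are stated. The weak-solution/$q$-harmonic equivalence of \cite[Theorem 5.5]{BB1999} needs gaugeability of the domain, and $(D,q_n)$ is in general \emph{not} gaugeable: with $q_n$ in place of $q$ the first eigenvalue of \eqref{spectral1}--\eqref{spectral2} equals $\lambda_1 - \lambda_n \le 0$, while gaugeability would require it to be positive. One bypasses this by localizing. For each $x_0 \in D$ there is a ball $B = B(x_0,r)$ with $\overline B \subset D$ such that $(B,q_n)$ is gaugeable (Khasminski's lemma, as on p.~57 of \cite{BB1999}, or \cite[Lemma 3.5]{BB2000}). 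On $B$ the restriction of $\varphi_n$ is still a weak solution of the same equation, so by \cite[Theorem 5.5]{BB1999} it coincides a.e.\ with a $q_n$-harmonic function on $B$; continuity of $\varphi_n$ then identifies the two, so $\varphi_n$ is $q_n$-harmonic on $B$. As $q_n$-harmonicity is a local property, $\varphi_n$ is $q_n$-harmonic in $D$, which is all that the (essentially local) proofs of Theorems \ref{mainthm} and \ref{sharpthm} use, the representation \eqref{representation} being invoked there only on small balls or boxes.

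Granting this, the estimates follow by substitution. Applying the first part of Theorem \ref{mainthm} — for $\alpha \in (1,2)$, the corresponding estimates of \cite{BKN2002} — to $u = \varphi_1$ with potential $q_1$, which is nonnegative on $\R^d$, yields the existence of $\nabla\varphi_1$ together with \eqref{Schr1}, the constant being $c = c(\alpha,d,\eta,q_1) = c(D,q,\alpha,\eta)$ because the H\"older constant of $q_1$ is that of $q$ and $\lambda_1 = \lambda_1(D,q,\alpha)$. For $n \ge 2$, the second part of Theorem \ref{mainthm} (resp.\ \cite{BKN2002}) applied to $u = \varphi_n$, $q = q_n$, gives the existence of $\nabla\varphi_n$ and \eqref{Schr2} with $c_n = c\,\|\varphi_n\|_\infty = c_n(D,q,\alpha,\eta)$. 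Finally, when $D$ is a bounded Lipschitz domain, apply Theorem \ref{sharpthm} to $u = \varphi_1$, $q = q_1$, with $V$ any bounded open set containing $\overline D$ and $K = \overline D$: $\varphi_1$ is nonnegative, bounded on $V$, $q_1$-harmonic in $D \cap V = D$, and vanishes on $D^c \cap V$, so Theorem \ref{sharpthm} furnishes $\eps = \eps(D,q,\alpha,\eta)$ and the lower bound $|\nabla\varphi_1(x)| \ge c^{-1}\varphi_1(x)/\delta_D(x)$ for $x \in D$ with $\delta_D(x) < \eps$, which is \eqref{Schr3}.

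The main obstacle is exactly the gaugeability failure of $(D,q_n)$: it rules out a one-line appeal to the weak-solution/$q$-harmonic equivalence, and it is circumvented by the localization step in the second paragraph (which also uses that $\varphi_n$ is already known to be continuous, so that no further modification on a null set is needed). Everything else is the routine substitution $q \mapsto q + \lambda_n$ plus bookkeeping of the dependence of the constants on $D$, $q$, $\alpha$, $\eta$.
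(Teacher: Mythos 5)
Your overall strategy --- absorb $\lambda_n$ into the potential and localize to get around the gaugeability failure --- is also the paper's strategy, but the pivotal step is flawed. The assertion ``as $q_n$-harmonicity is a local property, $\varphi_n$ is $q_n$-harmonic in $D$'' is false, and in fact contradicts what you correctly observed two sentences earlier: if $\varphi_n$ were $q_n$-harmonic in $D$, then by \cite[Lemma 4.3]{BB2000} $(W,q_n)$ would be gaugeable for every open $W$ with $\overline{W}\subset D$, which fails for $W$ close to $D$ once $\lambda_1(D,q_n)=\lambda_1-\lambda_n\le 0$. The definition of $q$-harmonicity requires absolute convergence of the gauge for \emph{all} compactly contained $W$, so it is not a local property. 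Consequently Theorems \ref{mainthm} and \ref{sharpthm} cannot be invoked as black boxes with domain $D$ and potential $q_n$. Your fallback remark --- that their proofs only use the representation \eqref{representation} on small balls --- is essentially correct for Theorem \ref{mainthm} (one must also note that the radius of the balls on which $\varphi_n$ is $q_n$-harmonic is uniform, since the Khasminski/\cite[Lemma 3.5]{BB2000} radius depends only on $\|q_n\|_\infty$), but as written it amounts to reproving the theorem under a weaker hypothesis rather than citing it.

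The more serious gap concerns \eqref{Schr3}. Your localization yields $q_1$-harmonicity only on balls with $\overline{B}\subset D$, whereas the lower bound rests on Lemma \ref{lowerbox}, which uses the representation on boxes $\Delta_r$ reaching $\partial D$; interior balls give nothing there. The paper's device repairs both issues at once: cover $D$ by finitely many balls $B(x_1,r),\dots,B(x_M,r)$ with centers in $D$ such that each $(B(x_m,2r)\cap D,\,q+\lambda_n)$ is gaugeable (these sets are allowed to touch $\partial D$), so that $\varphi_n$ is genuinely $(q+\lambda_n)$-harmonic on each $B(x_m,2r)\cap D$ and Theorems \ref{mainthm}, \ref{sharpthm} (and Lemma \ref{lowerbox}) apply there verbatim; then the elementary inequality $\delta_{B(x_m,2r)\cap D}(x)\wedge 1\ge r\,(\delta_D(x)\wedge 1)$ for $x\in B(x_m,r)\cap D$ converts the local estimates into \eqref{Schr1}--\eqref{Schr3} with the stated constants. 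You should replace the locality claim with this covering argument (or else explicitly rework the proofs of Theorems \ref{mainthm} and \ref{sharpthm} under the hypothesis of local $q_n$-harmonicity on sets of uniformly bounded diameter, including sets adjacent to $\partial D$).
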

The result is new even for $q \equiv 0$. In that case this is the eigenvalue problem for the fractional Laplacian with zero exterior condition. This eigenvalue problem have been recently very intensively studied see e.g. \cite{BK2004}, \cite{CS2005}, \cite{BKS2009}, \cite{KL2011}, \cite{FG2011}, \cite{BKM2006}. 

For $\alpha = 2$, under additional assumptions that $d \ge 3$, $D$ is connected and Lipschitz, inequalities (\ref{Schr1}), (\ref{Schr2}) follows from \cite[Theorem 1]{CZ1990} and inequality (\ref{Schr3}) follows from \cite[Theorem 1]{BP1999}. 

Before we come to the proof of Corollary \ref{Schrodinger} we will need the following easy addendum to the results obtained in \cite{BKN2002}.
\begin{lemma}
\label{qharmonic12}
Let $\alpha \in (1,2)$, $q \in \calJ^{\alpha - 1}$ and $D \subset \R^d$ be an open set. Assume that the function $u$  is $q$-harmonic in $D$ and $\|u\|_{\infty} < \infty$. Then $\nabla u(x)$ exists for any $x \in D$ and we have 
$$
|\nabla u(x)| \le c \frac{\|u\|_{\infty}}{\delta_{D}(x) \wedge 1}, \quad \quad x \in D,
$$
where $c = c(d,\alpha,q)$.
\end{lemma}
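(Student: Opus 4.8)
The plan is to reduce everything to a ball and then invoke the gradient estimates for the Green operator already available for $\alpha \in (1,2)$ in \cite{BKN2002}, exactly as in the proof of Theorem \ref{mainthm} but without the self-improvement bootstrap, since for $\alpha > 1$ the kernel $y \mapsto |\nabla_x G_B(x,y)|$ is integrable and no H\"older assumption on $q$ beyond $q \in \calJ^{\alpha-1}$ is needed. First I would fix $z \in D$ and $i \in \{1,\dots,d\}$, choose (via \cite[Lemma 3.5]{BB2000}) a radius $r_0 = r_0(d,\alpha,q) \in (0,1]$ so that the conditional gauge of every ball of radius $\le r_0$ contained in $D$ lies between $1/2$ and $2$, and set $r = (\delta_D(z)\wedge r_0)/2$, $B = B(z,r)$. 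The representation \eqref{representation} gives
\begin{equation*}
u(x) = f(x) + G_B(qu)(x), \qquad x \in B,
\end{equation*}
where $f(x) = E^x u(X_{\tau_B})$ is $\alpha$-harmonic and bounded on $B$ with $\|f\|_\infty \le \|u\|_\infty$.

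Next I would estimate the two terms. For $f$, Lemma \ref{alphaharmonic} gives $|\nabla f(x)| \le d\,\|f\|_\infty/\delta_B(x) \le c\,\|u\|_\infty/(\delta_D(z)\wedge 1)$ for $x \in B(z,r/2)$. For the Green term, the integrability of $y \mapsto |\nabla_x G_B(x,y)|$ for $\alpha \in (1,2)$, together with the pointwise bound $|\nabla_x G_B(x,y)| \le d\,G_B(x,y)/(|x-y|\wedge r)$ from \cite[Corollary 3.3]{BKN2002} and $q u$ being bounded on $B$ (since $\|u\|_\infty < \infty$ and $q \in \calJ^{\alpha-1} \subset L^1_{\mathrm{loc}}$ locally, with $\int_B |\nabla_x G_B(x,y)|\,dy \le c\,r^{\alpha-1}$), yields that $\nabla G_B(qu)(z)$ exists and
\begin{equation*}
|\nabla G_B(qu)(z)| \le c\,\|u\|_\infty \sup_{x\in B}\int_B |\nabla_x G_B(x,y)|\,dy \le c\,\|u\|_\infty\, r^{\alpha-1} \le c\,\frac{\|u\|_\infty}{\delta_D(z)\wedge 1},
\end{equation*}
using $r \le 1$ and $\alpha - 1 > 0$. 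Here one can also invoke \cite[Lemma 5.2]{BKN2002} directly, which is precisely the statement that $y\mapsto |\nabla_x G_B(x,y)|$ is integrable with the stated bound. Differentiating \eqref{representation} at $z$ then shows $\nabla u(z)$ exists, and combining the two estimates gives $|\nabla u(z)| \le c\,\|u\|_\infty/(\delta_D(z)\wedge 1)$. Since $z\in D$ and $i$ were arbitrary, the lemma follows.

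The only mild subtlety is justifying that one may differentiate $G_B(qu)$ under the integral sign at $z$; this is handled exactly as in Case 1 of the proof of Lemma \ref{gradientGreen} (split the integral into $B(z,2|h|)$ and its complement, bound the first piece by $c\,|h|^{\alpha-1}\to 0$ using $G_B(x,y)\le K_\alpha(x-y)$, and apply dominated convergence on the second piece with dominating function $c\,|y-z|^{\alpha-1-d} \in L^1(B)$), where now the absence of the factor $|f(y)-f(\hat y)|$ is compensated by the boundedness of $qu$ rather than its H\"older continuity. I do not expect any real obstacle here: the case $\alpha \in (1,2)$ is technically easier than $\alpha \in (0,1]$ precisely because of the integrability of the gradient of the Green function, and all the required ingredients are already in \cite{BKN2002}; this lemma is only stated separately because it is needed in the proof of Corollary \ref{Schrodinger} for the full range $\alpha \in (0,2)$.
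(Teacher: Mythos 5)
Your overall strategy---decompose $u(x) = E^x u(X_{\tau_B}) + G_B(qu)(x)$ on a small ball, bound the $\alpha$-harmonic part by Lemma \ref{alphaharmonic}, and bound the Green-potential part using the integrability of $y \mapsto |\nabla_x G_B(x,y)|$ for $\alpha \in (1,2)$---is exactly the route the paper takes; the paper simply outsources it, observing that Lemmas 5.4 and 5.5 of \cite{BKN2002} go through once one replaces the assumption ``$u$ nonnegative'' by ``$\|u\|_\infty < \infty$'' and $u(x)$ by $\|u\|_\infty$ on the right-hand side. So the architecture of your argument matches the intended one.

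There is, however, a genuine error in the execution: you treat $qu$ as bounded on $B$. A function $q \in \calJ^{\alpha-1}$ need not be bounded (local integrability, which is essentially all that membership in the Kato class gives you pointwise, is much weaker than boundedness), so the displayed chain $|\nabla G_B(qu)(z)| \le c\,\|u\|_\infty \sup_{x\in B}\int_B |\nabla_x G_B(x,y)|\,dy$ is not valid: you have silently replaced $\int_B |\nabla_x G_B(x,y)|\,|q(y)|\,|u(y)|\,dy$ by $\|q\|_\infty\,\|u\|_\infty \int_B |\nabla_x G_B(x,y)|\,dy$, and $\|q\|_\infty$ may be infinite. What is actually needed is that the $q$-weighted integral $\int_B |\nabla_x G_B(x,y)|\,|q(y)|\,dy$ is finite and small for balls of small radius; this follows from $|\nabla_x G_B(x,y)| \le d\,G_B(x,y)/(|x-y|\wedge r) \le c\,|x-y|^{\alpha-1-d} = c\,K_{\alpha-1}(x-y)$ combined with the defining property of $\calJ^{\alpha-1}$, and it is precisely the content of the result of \cite{BKN2002} that you cite but mischaracterize as an unweighted integrability statement. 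The same correction is needed in your differentiation-under-the-integral step: the dominating function must be $c\,\|u\|_\infty\,|y-z|^{\alpha-1-d}\,|q(y)|$, integrable by the Kato condition, not by boundedness of $qu$. Once these points are repaired---which also forces the radius $r_0$, and hence the final constant, to depend on $q$ through its Kato modulus, consistent with $c = c(d,\alpha,q)$ in the statement---your proof is correct and coincides with the paper's.
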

\begin{proof}
The proof of this lemma follows from the arguments used in \cite{BKN2002}. First note that the assertion of Lemma 5.4 in \cite{BKN2002} remains true if we replace the assumption that $u$ is nonnegative in $\R^d$ by the assumption that $\|u\|_{\infty} < \infty$ and when we replace $u(x)$ by $\|u\|_{\infty}$ on the right-hand side of the estimate of $|\nabla G_r(qu)(x)|$. Then the proof of Lemma \ref{qharmonic12} is almost the same as the proof of Lemma 5.5 in \cite{BKN2002}.
\end{proof}

\begin{proof}[proof of Corollary \ref{Schrodinger}]
It is clear that $\varphi_n$ is not $(q + \lambda_n)$-harmonic on the whole $D$ because $(D,q+\lambda_n)$ is not gaugeable. However by the definition of the Kato class and standard arguments (see e.g. page 299 \cite{BB2000}) for any $n = 1,2,\ldots$ there exists $r \in (0,1]$ and the finite number of balls $B(x_1,r),\ldots,B(x_M,r)$ such that $x_1,\ldots,x_M \in D$,
$$
D \subset \sum_{m = 1}^{M} B(x_m,r)
$$
and each $(B(x_m,2r) \cap D,q+\lambda_n)$ is gaugeable. This means that $\varphi_n$ is $(q + \lambda_n)$-harmonic on each $B(x_m,2r) \cap D$. Note that for any $x \in B(x_m,r) \cap D$ we have
$$
\delta_{B(x_m,2r) \cap D}(x) \wedge 1 \ge \delta_D(x) \wedge r \wedge 1 \ge r (\delta_D(x) \wedge 1).
$$
Now, (\ref{Schr1}), (\ref{Schr2}) follow from Theorem \ref{mainthm} for $\alpha \in (0,1]$ and from \cite[Lemma 5.5]{BKN2002}, Lemma \ref{qharmonic12} for $\alpha \in (1,2)$. Inequality (\ref{Schr3}) follows from similar arguments and Lemma \ref{lowerbox} for $\alpha \in (0,1]$ and \cite[Lemma 5.6]{BKN2002} for $\alpha \in (1,2)$.
\end{proof}

As another application of our main result we show that under some assumptions on $q$ a weak solution of $\Delta^{\alpha/2} u + q u = 0$ is in fact a strong solution. First we need the following easy lemma.

\begin{lemma} 
\label{existence}
Let $\alpha \in (0,1)$. Choose $x_0 \in \R^d$ and $r > 0$. Assume that a Borel function $u: \R^d \to \R$ satisfies 
$$
\int_{\R^d} \frac{|u(y)|}{(1 + |y|)^{d + \alpha}} \, dy < \infty,
$$
$\nabla u(x)$ exists and $|\nabla u(x)| \le A$ for all $x \in B(x_0,r)$ and some constant $A$. Then $\Delta^{\alpha/2} u(x)$ is well defined and continuous on $B(x_0,r/2)$.
\end{lemma}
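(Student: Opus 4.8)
The plan is to establish the two assertions in turn: first, that for $\alpha\in(0,1)$ the limit defining $\Delta^{\alpha/2}u(x)$ is actually an absolutely convergent integral at every $x\in B(x_0,r/2)$ (so no principal-value cancellation is needed), and second, that $x\mapsto\Delta^{\alpha/2}u(x)$ is continuous on $B(x_0,r/2)$ by a dominated convergence argument. Two remarks will be used throughout without further comment: since $|\nabla u|\le A$ on the convex set $B(x_0,r)$, the function $u$ is $A$-Lipschitz (in particular continuous) there; and the weighted integrability hypothesis forces $u\in L^1_{\mathrm{loc}}(\R^d)$, so $u$ is integrable on every ball.

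For well-definedness I would fix $x\in B(x_0,r/2)$ and split $\int_{|y-x|>\eps}\tfrac{u(y)-u(x)}{|y-x|^{d+\alpha}}\,dy$ at $|y-x|=r/4$. On the inner part the whole segment $[x,y]$ lies in $B(x_0,r)$, so $|u(y)-u(x)|\le A|y-x|$ and the integrand is bounded by $A|y-x|^{1-d-\alpha}$, which is integrable over $\{|y-x|\le r/4\}$ exactly because $\alpha<1$; hence the inner part converges absolutely and the $\eps\downarrow0$ limit exists trivially. On the outer part, $\tfrac{|u(x)|}{|y-x|^{d+\alpha}}$ is integrable over $\{|y-x|>r/4\}$ (constant numerator, decay exponent $d+\alpha>d$); $\tfrac{|u(y)|}{|y-x|^{d+\alpha}}\le(4/r)^{d+\alpha}|u(y)|$ handles the bounded region $\{r/4<|y-x|\le R\}$ by $u\in L^1_{\mathrm{loc}}$; and for $|y|>R$ with $R$ so large that $|y-x|\ge|y|/2$ one has $\tfrac{|u(y)|}{|y-x|^{d+\alpha}}\le c\,|u(y)|(1+|y|)^{-d-\alpha}$, integrable by hypothesis. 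This gives $\Delta^{\alpha/2}u(x)=\calA(d,-\alpha)\int_{\R^d}\tfrac{u(y)-u(x)}{|y-x|^{d+\alpha}}\,dy$ as an absolutely convergent integral.

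For continuity I would fix $x^\ast\in B(x_0,r/2)$ and $\delta>0$ with $\overline{B(x^\ast,2\delta)}\subset B(x_0,r/2)$, and take $x_n\to x^\ast$, eventually in $B(x^\ast,\delta)$. Writing $\Delta^{\alpha/2}u(x_n)$ with the shift $z=y-x_n$ and splitting at $|z|=\delta$: on $|z|\le\delta$ the integrand $\tfrac{u(x_n+z)-u(x_n)}{|z|^{d+\alpha}}$ converges pointwise to $\tfrac{u(x^\ast+z)-u(x^\ast)}{|z|^{d+\alpha}}$ (because $x_n+z$ and $x^\ast+z$ stay in $B(x_0,r)$, where $u$ is continuous) and is dominated by the fixed integrable function $A|z|^{1-d-\alpha}1_{\{|z|\le\delta\}}$; on $|z|>\delta$ one writes the contribution as $\int_{|y-x_n|>\delta}\tfrac{u(y)}{|y-x_n|^{d+\alpha}}\,dy-u(x_n)\int_{|z|>\delta}|z|^{-d-\alpha}\,dz$, the second term being continuous in $x_n$ since it is $u(x_n)$ times a constant, and the first being handled by dominated convergence in $y$ with the $x_n$-independent integrable majorant $\delta^{-d-\alpha}|u(y)|1_{\{|y-x^\ast|\le2\delta\}}+2^{d+\alpha}|u(y)|\,|y-x^\ast|^{-d-\alpha}1_{\{|y-x^\ast|>2\delta\}}$ (integrable, again by splitting at $|y|$ large as in the previous step). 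Hence $\Delta^{\alpha/2}u(x_n)\to\Delta^{\alpha/2}u(x^\ast)$, which is the claimed continuity.

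I do not expect a serious obstacle here; the only point requiring care is the construction of a single integrable majorant, valid uniformly for $x$ in a small ball around $x^\ast$, in the region away from the singularity --- there one must combine local integrability of $u$ (for $y$ bounded) with the weighted integrability hypothesis (for $|y|$ large). It is also worth emphasizing that the assumption $\alpha<1$ is used essentially exactly once: it is what makes $|z|^{1-d-\alpha}$ locally integrable, so that the bare gradient bound on $u$ suffices; for $\alpha\ge1$ one would need principal-value cancellation and hence a second-order type condition on $u$.
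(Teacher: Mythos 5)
Your proof is correct and follows essentially the same approach as the paper: the gradient bound gives a Lipschitz estimate that makes the integrand absolutely integrable near the singularity precisely because $\alpha<1$, while local integrability and the weighted integrability hypothesis handle the region away from $x$. The only cosmetic difference is that the paper deduces continuity by showing the truncated integrals $f_\eps$ are continuous and converge uniformly as $\eps\downarrow 0$, whereas you apply dominated convergence directly with an explicit $x$-uniform majorant; these are interchangeable, and you simply spell out details the paper leaves as ``one can easily show.''
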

\begin{proof}
Let $x \in B(x_0,r/2)$. Choose $\eps \in (0,r/2)$. We have
\begin{eqnarray}
\label{eps1a}
\int_{|x - y| < \eps} \frac{|u(y) - u(x)|}{|y - x|^{d + \alpha}} \, dy
&\le& \int_{|x - y| < \eps} \frac{A |y - x|}{|y - x|^{d + \alpha}} \, dy \\
\label{eps2a}
&=& c A \int_0^{\eps} \rho^{-\alpha} \, d \rho \to 0, \quad \quad \text{when} \quad \eps \to 0,
\end{eqnarray}
where $c = c(d)$. By the definition of $\Delta^{\alpha/2}$ (see Preliminaries) we obtain that $\Delta^{\alpha/2} u(x)$ is well defined.

One can easily show that for any fixed $\eps \in (0,r/2)$ the function
$$
f_{\eps}(x) = \int_{|x - y| > \eps} \frac{|u(y) - u(x)|}{|y - x|^{d + \alpha}} \, dy
$$
is continuous on $B(x_0,r/2)$. This and (\ref{eps1a}) - (\ref{eps2a}) imply that $\Delta^{\alpha/2} u(x)$ is continuous on $B(x_0,r/2)$.
\end{proof}

\begin{proof}[proof of Corollary \ref{weakstrong}]
Choose arbitrary $x_0 \in D$. It is clear that there exists $r > 0$ such that $B(x_0,2r) \subset \subset D$ and $(B(x_0,2r),q)$ is gaugeable. This can be done by Khasminski's lemma (see page 299 \cite{BB2000}). Put $B = B(x_0,r)$. By \cite[Theorem 5.5]{BB1999} we may assume that $u$ is a $q$-harmonic function on $B(x_0,2r)$ (after a modification on a set of Lebesgue measure zero). By (\ref{representation}) we get
\begin{equation}
\label{repr1}
u(x) = E^x u(X_{\tau_B}) + G_B(qu)(x), \quad \quad  x \in \R^d.
\end{equation}
By Theorem \ref{mainthm} and Lemma \ref{existence} $\Delta^{\alpha/2} u(x)$ is well defined and continuous on $B(x_0,r/2)$. The function $v(x) = E^x u(X_{\tau_B})$ is an $\alpha$-harmonic function on $B(x_0,r/2)$, so $\Delta^{\alpha/2} v(x) = 0$ on $B(x_0,r/2)$. Hence by (\ref{repr1}) we obtain
$$
\Delta^{\alpha/2} u(x) = \Delta^{\alpha/2}(G_B(qu))(x), \quad \quad x \in B(x_0,r/2).
$$
By Lemma 5.3 \cite{BB2000} we have
$$
\Delta^{\alpha/2}(G_B(qu))(x) = -q(x) u(x),
$$
for almost all $x \in B(x_0,r/2)$. But both sides of this equality are continuous so in fact this equality holds for all $x \in B(x_0,r/2)$.
\end{proof}

{\bf{ Ackowledgements.}} I am grateful for the hospitality of the Institute of Mathematics, Polish Academy of Sciences, the branch in Wroc{\l}aw, where part of this paper was written.

\end{document}